\newtheorem{definition}{Definition}
\newtheorem{theorem}{Theorem}[section]
\newtheorem{lemma}[theorem]{Lemma}
\newtheorem{prop}[theorem]{Proposition}
\newtheorem{corollary}[theorem]{Corollary}
\newtheorem{remark}{Remark}
\newcommand{\curl}{\text{curl}}
\newcommand{\pupx}{\frac{\partial{u}}{\partial{x}}}
\newcommand{\opeps}{\text{op}_\varepsilon}
\newcommand{\xiover}{\frac{\xi}{\sqrt{\varepsilon}}}
\newcommand{\supp}{\text{supp}}
\newcommand{\Z}{\mathbb{Z}}
\newcommand{\R}{\mathbb{R}}
\newcommand{\T}{\mathbb{T}}
\newcommand{\N}{\mathbb{N}}\newcommand{\piperp}{\mathbb{P}_{\xi_o^{\perp}}}
\begin{document}
\title{Linear instability criteria for ideal fluid flows subject to two subclasses of perturbations}
\author{Elizabeth Thoren}
\maketitle

\section{Introduction}

The criteria for linear instability can be reduced to conditions on the spectral radius of the linear evolution operator.  More specifically, we can demonstrate the instability of some flow if we find the spectral radius of the associated linear evolution operator to be greater than 1 for some positive time, $t$.   The approach here involves computing the radius of a subset of the spectrum known as the essential spectrum.  This quantity is equal to a Lyapunov-type exponent associated with the equilibrium flow, see \cite{vishik1, fv1, fv2, fv3, fsv} for example.

For this paper we examine the linear stability of a smooth steady ideal fluid flow on a periodic fluid domain and consider the linear evolution of two separate classes of perturbations arising naturally from the group structure of hydrodynamics.   Our first space of perturbations is the tangent space to the orbit of the steady flow under the co-adjoint action of the group.  The co-adjoint orbit of a divergence free vector field is the collection of isovorticial fields, so these linear perturbations infinitesimally preserve the topology of the vorticity of the equilibrium flow.  We define these perturbations to be the closure of the image of a certain linear operator $B$, so we denote the space $\overline{\text{Im}B}$.  We also consider the linearized flow on the canonical factor space $L^2_{sol}/\overline{\text{Im}B}$.

The main results of this paper are extensions of a method developed by Vishik in \cite{vishik1} for computing the essential spectral radius of the evolution operator $G(t)$ associated with a smooth periodic solution, $u$, to steady Euler's equation.  To compute this quantity for our evolution operator, we must define a Lyapunov-type exponent associated with the following bicharacteristic amplitude system:
\begin{equation}\renewcommand{\arraystretch}{1.5}(\text{BAS})\left\{\begin{array}{l}
\dot{x}= u(x),  \\
\dot{\xi} = -\Bigl( \frac{\partial{u}}{\partial{x}}\Bigr)^T\xi, \\
\label{b}
\dot{b}= -\Bigl(\pupx \Bigr)b + 2\Bigl( \pupx b, \xi \Bigr) \frac{\xi}{|\xi|^2},\\
\bigl(x(0),\xi(0),b(0)\bigr) = (x_0,\xi_0,b_0) \in \mathcal{A},\end{array}\right.
\end{equation}
where the set of admissible initial conditions $\mathcal{A}$ is defined by
\begin{equation*}
\mathcal{A}:= \{(x_0,\xi_0,b_0)\in \T^n \times \R^n \times \R^n |\ \xi_0 \perp b_0 ,\ |\xi_0|=|b_0|=1 \}.
 \end{equation*}
 
\begin{theorem}[Vishik `96]
\label{vishik}
Let  $\mu$ be the following Lyapunov-type exponent:
\begin{equation*}
\mu := \lim_{t\to\infty} \frac{1}{t}\log\sup_{(x_0,\xi_0,b_0)\in \mathcal{A}}|b(x_0,\xi_0,b_0;t)|,
\end{equation*}
then $r_{ess}(G(t))= e^{\mu t}$.
\end{theorem}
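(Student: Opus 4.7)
The plan is to prove the equality by establishing matching bounds on the essential operator norm $\|G(t)\|_{ess}$ and then passing to the spectral radius via Gelfand's formula $r_{ess}(G(t)) = \lim_{n\to\infty}\|G(nt)\|_{ess}^{1/n}$ combined with the group property $G(t)^n = G(nt)$. Since the essential norm is invariant under compact perturbations, both bounds will come from high-frequency (semiclassical) analysis: $G(t)$ is microlocally controlled by the bicharacteristic flow $(\dot x, \dot\xi)$ together with the transport equation for the polarization $b$ appearing in (BAS).

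For the lower bound $\|G(t)\|_{ess} \geq \sup_{\mathcal{A}}|b(\cdot,t)|$, I would construct WKB quasimodes of the form $\phi_\varepsilon(x) = a_\varepsilon(x)\,b_\varepsilon(x)\,e^{i\xi_0\cdot x/\varepsilon}$, with Gaussian envelope $a_\varepsilon$ concentrated near $x_0$ and initial polarization $b_\varepsilon(x_0)=b_0\perp\xi_0$, $|b_0|=1$. A standard geometric-optics ansatz, propagated through the linearized Euler operator with Leray projection, shows that $G(t)\phi_\varepsilon$ is to leading order another wave packet centered at $x(t)$ with carrier frequency $\xi(t)/\varepsilon$ and polarization $b(t)$ governed by (BAS); the correction term $2(\partial u/\partial x\cdot b,\xi)\xi/|\xi|^2$ is exactly what the high-frequency Leray projection contributes, and one checks directly that the evolution preserves $b\perp\xi$, consistent with divergence-free perturbations. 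Because $\phi_\varepsilon \rightharpoonup 0$ as $\varepsilon\to 0$, any compact operator $K$ satisfies $\|K\phi_\varepsilon\|\to 0$, so $\|G(t)-K\| \geq (1-o(1))|b(x_0,\xi_0,b_0;t)|$; optimizing over $\mathcal{A}$ gives the essential-norm lower bound.

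For the upper bound, I would employ an Egorov-type argument. The Leray projector is a zero-order pseudodifferential operator with principal symbol smooth away from $\xi=0$, so $G(t)$ can be realized, modulo compacts on $L^2_{sol}$, as a matrix-valued Fourier integral operator whose canonical relation is the bicharacteristic flow and whose principal symbol evolves by (BAS). The Calkin-quotient norm is then bounded above by $\sup_{\mathcal{A}}|b(\cdot,t)|$, up to semiclassical remainders that are compact. Combining both bounds gives $\|G(t)\|_{ess}=\sup_{\mathcal{A}}|b(\cdot,t)|$, and taking $n$-th roots along $nt$ yields $r_{ess}(G(t))=e^{\mu t}$.

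The hard part will be the global-in-time control of the semiclassical remainders for a nonlocal, pressure-coupled evolution on $L^2_{sol}$: the Egorov calculus must be iterated over many short time steps without letting the errors accumulate to leading order as $\varepsilon\to 0$, the WKB quasimodes likely need subprincipal corrections to remain divergence-free to sufficient order, and compositions of pseudodifferential operators with Fourier integral operators must be handled uniformly in $t$. This is the technical heart of Vishik's original argument, and is presumably also the starting point for the extensions pursued in the rest of the paper.
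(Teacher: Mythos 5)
Your proposal follows essentially the same route as Vishik's argument, which this paper does not reprove but recapitulates in Section \ref{approxG(t)}: split off the low frequencies with $\opeps\bigl[\chi(\xi/\sqrt{\varepsilon})\bigr]$ (compact, hence invisible to the Calkin norm), approximate $G(t)$ on high frequencies by the pseudodifferential operator $\opeps^s[a_0]\circ\mathfrak{g}_u(t)$ whose symbol transports along (BAS) (your Egorov step, the paper's Theorem \ref{VishikThm}), bound the quotient norm above via Calderon--Vaillancourt and below via localized oscillating wave packets, and pass to the essential spectral radius by Nussbaum's formula together with subadditivity of $\log\sup|b|$. The only cosmetic difference is that you kill compact perturbations using weak convergence of the quasimodes, whereas the paper shows directly that finite-rank operators composed with the high-frequency cutoff have $o(1)$ norm; these are interchangeable.
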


For a given 2- or 3-dimensional smooth steady periodic fluid flow, we establish lower bounds for the radius of the essential spectrum of the linear evolution operator on each class of perturbations in terms of a series of Lyapunov-type exponents based on the bicharacteristic amplitude system above.

  For a 3-dimensional fluid flow $u$, let $\omega := \curl u$ be the vorticity of our steady flow and define the following Lyapunov-type exponents:
\begin{align*}
 \mu_{3*} = \lim_{t\to\infty}\frac{1}{t}\log\sup_{\substack {(x_0,\xi_0,b_0)\in \mathcal{A}\\x_0 \in \supp(\omega)}} |b(x_0, \xi_0, b_0; t)|\\
\mu_{3F} = \lim_{t\to\infty}\frac{1}{t}\log\sup_{\substack {(x_0,\xi_0,b_0)\in \mathcal{A}\\x_0 \notin \supp(\omega)}} |b(x_0, \xi_0, b_0; t)|,
\end{align*}
where $b(x_0, \xi_0, b_0; t)$ denotes a solution to (BAS) at time $t>0$ with initial conditions $(x_0,\xi_0,b_0)$.  Then we have the following lower bound for the essential spectral radius of the linear evolution operator restricted to perturbations tangent to the co-adjoint orbit of $u$, $\overline{\text{Im}B}$:  $r_{ess}(G(t)|_{\overline{\text{Im}B}}) \geq e^{\mu_{3*}t}$.  And we have another lower bound for the essential spectral radius of the linear evolution acting on the factor space: $r_{ess}(G_F(t)) \geq e^{\mu_{3F}t}$.

For 2-dimensional flows, our classes of perturbations are described in terms of the scalar vorticity and the resulting exponents depend on its gradient, $\nabla\omega$.  Define the Lyapunov-type exponent $\mu_{2*}$ by
\begin{equation*}
 \mu_{2*} = \lim_{t\to\infty}\frac{1}{t}\log\sup_{\substack {(x_0,\xi_0,b_0)\in \mathcal{A} \\x_0 \in \supp (\nabla \omega)}} |b(x_0, \xi_0, b_0; t)|.
\end{equation*} And define $\mu_{2F}$ by
\begin{equation*}
\mu_{2F} = \lim_{t\to\infty}\frac{1}{t}\log \sup_{(x_0,\xi_0,b_0)\in \mathcal{A}_1\cup \mathcal{A}_2} |b(x_0, \xi_0, b_0; t)|,
\end{equation*} where 
\begin{align*}\mathcal{A}_1:=& \{(x_0,\xi_0,b_0)\in \mathcal{A}: x_0 \notin \supp(\nabla \omega)\},\\
\mathcal{A}_2:=& \{(x_0,\xi_0,b_0)\in \mathcal{A}: \nabla\omega(x_0)\neq 0,\ b_0\perp\nabla\omega(x_0)\}.
\end{align*}
Then we have similar lower bounds for the essential spectral radius of the linear evolution on each class of perturbations:
$r_{ess}(G(t)|_{\overline{\text{Im}B}}) \geq e^{\mu_{2*}t}$ and $r_{ess}(G_F(t)) \geq e^{\mu_{2F}t}$.

We make use of Vishik's approximation of $G(t)$ on high-frequencies by a pseudodifferential operator composed with parallel transport along the flow.  Section \ref{approxG(t)} details this approximation and its connection with (BAS) along with several related lemmas necessary for proofs of the main theorems.  In Sections \ref{3Dclassifysection} and \ref{2Dclassifysection} we introduce sufficient criteria for high frequency perturbations to approximately be in each of our two classes.    The main theorems of this paper, Theorem \ref{3Dmainthm} and Theorem \ref{2Dmainthm}, are proved in sections \ref{mainthms3D} and \ref{mainthms2D}.  In the last section we discuss instability near a hyperbolic stagnation point in the context of our two classes of perturbations.

\section{Two classes of perturbations}
\label{twoclasses}
For this paper we let $u\in C^{\infty}(\T^n)$ be a smooth solution to steady Euler's equation on the 2- or 3-dimensional torus $\T^n = \R^n/\Z^n$ and consider the linear evolution operator associated with Euler's equation linearized at $u$
\begin{equation*}\bigl(\text{LE}\bigr)\left\{\begin{array}{l}
\partial_t w = -u\cdot \nabla w - w\cdot \nabla u -\nabla q\\
w(x,0)= w_0(x)\end{array}\right.
\end{equation*}
where $\nabla q \in L^2(\T^n)$ is the gradient of a scalar pressure uniquely determined by the requirement that solutions $w(x,t)$ remain divergence free and our initial perturbation $w_0(x)$ is a small divergence free square integrable vector field on $\T^n$.  We let $L^2_{sol}$ denote the space of divergence free square integrable vector fields on $\T^n$ and define $G(t): L^2_{sol} \rightarrow L^2_{sol}$ to be the solution operator to the linearized system.  In other words, $w(x,t):= G(t)w_0(x)$ is a unique solution to (LE).

Our first class of perturbations are those that preserve the topology of vortex lines and we define them in terms of the operator $B:L^2_{sol} \to L^2_{sol}$, given by
\begin{equation*}
 Bv := \omega \times v - \nabla\alpha,
\end{equation*}
where $\omega := \curl u$ is the vorticity and the pressure gradient $\nabla \alpha$ is uniquely determined by the requirement that $Bv$ is divergence free.  The operator $B$ is actually a representation of the co-adjoint action of the Lie algebra of divergence free vector fields acting on our steady flow $u$, hence its image is the tangent space to the co-adjoint orbit of $u$.  From the Hodge decomposition, we have an equivalent formulation for $B$ in terms of $\mathbb{P}_{sol}$, the projection onto $L^2_{sol}$:
\begin{equation*}
Bv = \mathbb{P}_{sol}(\omega \times v).
\end{equation*}
 In particular the flow of an Eulerian fluid will stay within the co-adjoint orbit of the initial condition.    A straightforward computation demonstrates that $\overline{\text{Im}B}$ is an invariant subspace under the linearized flow, so the essential spectral radius of the evolution of perturbations in $\overline{\text{Im}B}$ is well defined.  We also consider the linearized flow on the factor space $L^2_{sol}/\overline{\text{Im}B}$ with the canonical factor space norm.  This factor space forms our second class of perturbations.  For a thorough discussion of the group structure of hydrodynamics see Arnold and Khesin's book \cite{a&k}. 

\section{3-dimensional high frequency vector fields}
\label{3Dclassifysection}

This section contains several lemmas regarding high frequency vector fields to be used in computing lower bounds for the essential spectral radius of the linear evolution operator acting on each class of perturbations.  The goal is to establish criteria for these perturbations so they approximate perturbations in $\overline{\text{Im}B}$ or that we may estimate their growth in the factor space $F := L^2_{sol}/\overline{\text{Im}B}$.

%%%%%%%%%%%%%%%%%%%%%%%%%%%%%%%%%%%%%%%%%%%%%%%%%%%%%%%%%%%
\begin{lemma}
\label{solproj}
Let $v$ be a vector field in $H^1(\mathbb{T} ^n)$, $\xi_0\in \Z^n$ for $n = 2,3$ and $\delta^{-1}\in \Z_+$.  Then
\begin{equation*}
\|\mathbb{P}_{sol} (v(x)e^{ix\cdot\xi_0/\delta}) - \piperp (v(x))e^{ix\cdot\xi_0/\delta}\|_{L^2} \leq \delta \frac{C}{|\xi_0|} \|v\|_{H^1},
\end{equation*}
where $\mathbb{P}_{sol}$ denotes the orthogonal projection of $L^2$ onto $L^2_{sol}$.
\end{lemma}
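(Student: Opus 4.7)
I would pass to Fourier series on $\T^n$. Because $\delta^{-1}\in\Z_+$ and $\xi_0\in\Z^n$, the vector $\xi_0/\delta$ lies in $\Z^n$, so multiplication by $e^{ix\cdot\xi_0/\delta}$ merely shifts the Fourier index. The projection $\mathbb{P}_{sol}$ acts on the Fourier coefficient at frequency $k\neq 0$ as the matrix projection $P_{k^\perp}:=I-\hat k\,\hat k^T$ onto the orthogonal complement of $k$ (and as zero at $k=0$). Expanding both terms on the left-hand side in Fourier series and substituting $m:=k-\xi_0/\delta$, the $k$-th Fourier coefficient of the difference becomes $\bigl(P_{(m+\xi_0/\delta)^\perp}-P_{\xi_0^\perp}\bigr)\hat v(m)$, so by Parseval's identity it suffices to show
\begin{equation*}
\sum_{m\in\Z^n}\bigl\|P_{(m+\xi_0/\delta)^\perp}-P_{\xi_0^\perp}\bigr\|^2\,|\hat v(m)|^2\;\leq\;C\,\frac{\delta^2}{|\xi_0|^2}\,\|v\|_{H^1}^2.
\end{equation*}

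Next I would bound the matrix norm of the difference of the two rank-$(n-1)$ projectors. Using $P_{e^\perp}=I-ee^T$ for unit vectors $e$, this norm is at most $C\bigl|\widehat{m+\xi_0/\delta}-\hat\xi_0\bigr|$, and since $\xi_0/\delta$ is a positive multiple of $\xi_0$ one has $\widehat{\xi_0/\delta}=\hat\xi_0$. The elementary inequality
\begin{equation*}
\Bigl|\tfrac{a}{|a|}-\tfrac{b}{|b|}\Bigr|\;\leq\;\frac{2|a-b|}{\max(|a|,|b|)},
\end{equation*}
applied with $a=m+\xi_0/\delta$, $b=\xi_0/\delta$, then yields $\|P_{(m+\xi_0/\delta)^\perp}-P_{\xi_0^\perp}\|\leq C\delta|m|/|\xi_0|$ in the low-frequency regime $|m|\leq|\xi_0|/(2\delta)$, since there the denominator $\max$ is controlled from below by $|\xi_0|/\delta$.

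I would then split the sum at the threshold $|m|=|\xi_0|/(2\delta)$. In the low range the bound just established contributes at most $C\delta^2|\xi_0|^{-2}\sum_m|m|^2|\hat v(m)|^2\leq C\delta^2|\xi_0|^{-2}\|v\|_{H^1}^2$. In the high range I would use the trivial bound $\|P_{(m+\xi_0/\delta)^\perp}-P_{\xi_0^\perp}\|\leq 2$, combined with the observation that $|m|>|\xi_0|/(2\delta)$ forces $1\leq 4\delta^2|m|^2/|\xi_0|^2$; the tail sum is therefore again dominated by $C\delta^2|\xi_0|^{-2}\|v\|_{H^1}^2$. Summing the two ranges and taking a square root gives the stated estimate.

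The main point to be careful about is the high-frequency tail: the angular smallness argument breaks down once $|m|$ is comparable to or exceeds $|\xi_0|/\delta$, so one has no recourse but to pay the trivial operator bound and absorb it into the extra factor of $|m|^2$ coming from the $H^1$ norm of $v$. The threshold $|\xi_0|/(2\delta)$ is chosen precisely so that the two regimes produce the same prefactor $\delta^2/|\xi_0|^2$, which is the source of the $H^1$ (rather than $L^2$) norm on the right-hand side.
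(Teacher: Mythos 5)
Your proof is correct, but it takes a genuinely different route from the paper's. The paper works in physical space: it constructs an explicit solenoidal corrector $\alpha = \delta\,\nabla\times\bigl(\tfrac{i\xi_0\times v}{|\xi_0|^2}e^{ix\cdot\xi_0/\delta}\bigr)$ and an explicit gradient corrector $\beta = -\tfrac{i\delta}{|\xi_0|^2}\nabla\bigl((\xi_0,v)e^{ix\cdot\xi_0/\delta}\bigr)$, observes that these agree with $\piperp(v)e^{ix\cdot\xi_0/\delta}$ and $\mathbb{P}_{\xi_0}(v)e^{ix\cdot\xi_0/\delta}$ up to $O(\delta|\xi_0|^{-1}\|v\|_{H^1})$ errors, and then invokes the Hodge decomposition to conclude. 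You instead diagonalize $\mathbb{P}_{sol}$ on the Fourier side as the mode-wise projection onto $k^{\perp}$ and estimate the difference of the two projectors frequency by frequency, splitting at $|m|=|\xi_0|/(2\delta)$. Your version has the advantage of making completely transparent \emph{why} the $H^1$ norm (rather than $L^2$) is needed — it is exactly what absorbs the high-frequency tail where the angular estimate fails — whereas the paper's version is coordinate-free in spirit and is the form reused later (e.g.\ in defining $\psi_\delta$ and $\overline{\psi}_{\zeta,\delta}$). One small inaccuracy: on the torus the Leray projector acts as the \emph{identity} on the $k=0$ mode (constant fields are divergence free), not as zero; this is harmless here because that mode corresponds to $m=-\xi_0/\delta$, which lies in your high-frequency range where you pay only the trivial operator bound.
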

%%%%%%%%%%%%%%%%%%%%%%%%%%%%%%%%%%%%%%%%%%%%%%%%%%%%%%%%%%
 \begin{proof}

We may treat the 2-dimensional case as planar vector fields in 3-dimensions, thus it suffices to prove the lemma in 3-dimensions.  Assume $v\in \bigl(H^1(\T^3)\bigr)^3$.  Define a vector field, $\alpha\in L^2_{sol}(\T^3)$, that approximates the projection of $v(x)e^{ix\cdot\xi_0/\delta}$ onto $\xi_0^{\perp}$:
\begin{equation*}
\alpha(x) := \delta \nabla\times \Bigl( \frac{i\xi_0 \times v(x)}{|\xi_0|^2}e^{ix\cdot\xi_0/\delta}\Bigr)
\end{equation*}
Since $(\xi_0 \times \piperp(v))\times \xi_0 = \piperp(v)$ we have
\begin{equation*}
\alpha(x)= \piperp(v(x))e^{ix\cdot\xi_0/\delta} + \delta\Bigl[\Bigl(\nabla\times \frac{i\xi_0 \times \piperp(v(x))}{|\xi_0|^2}\Bigr)e^{ix\cdot\xi_0/\delta}\Bigr].
\end{equation*}
It follows that $\|\alpha - \piperp(v)e^{i(\cdot) \cdot \xi_0/\delta}\|_{L^2} \leq \delta \frac{1}{|\xi_0|} \|v\|_{H^1}$.

Now we define an gradient vector field, $\beta \in L^2_{grad}(\T^3)$, that approximates the projection of $v(x)e^{ix\cdot\xi_0/\delta}$ in the direction of $\xi_0$:
\begin{align*}
\beta(x) :=& -\frac{i\delta}{|\xi_0|^2} \nabla ((\xi_0, v(x))e^{ix\cdot\xi_0/\delta})
\\ =& \mathbb{P}_{\xi_0}(v(x))e^{ix\cdot\xi_0/\delta} -\frac{i\delta}{|\xi_0|^2} \nabla (\xi_0, v(x))e^{ix\cdot\xi_0/\delta}.
\end{align*}
Thus, $\|\beta - \mathbb{P}_{\xi_0}(v)e^{i(\cdot)\cdot\xi_0/\delta}\|_{L^2} \leq \delta \frac{1}{|\xi_0|} \|v\|_{H^1}$.  From the Hodge decomposition we know $L^2(\T^3) = L^2_{sol}(\T^3) \oplus L^2_{grad}(\T^3)$ and, from the computations above, $\piperp(v)e^{i(\cdot) \cdot \xi_0/\delta}$ is approximately soleniodal while $\mathbb{P}_{\xi_0}(v)e^{i(\cdot)\cdot\xi_0/\delta}$ is approximately a gradient.  It follows that
\begin{equation}
\label{3Dversion}
 \|\mathbb{P}_{sol} (v(x)e^{ix\cdot\xi_0/\delta}) - \piperp (v(x))e^{ix\cdot\xi_0/\delta} \|_{L^2(\T^3)} \leq \delta \frac{C}{|\xi_0|} \|v\|_{H^1(\T^3)}.
\end{equation}
\end{proof}

%%%%%%%%%%%%%%%%%%%%%%%%%%%%%%%%%%%%%%%%%%%%%%%%%%%%
%%%%%%%%%%%%%%%%%%%%%%%%%%%%%%%%%%%%%%%%%%%%%%%%%%%%

Here we define the basic structure of our fast oscillating vector fields.  In Section \ref{2Dclassifysection} we discuss the special case of 2-dimensional fast oscillating vector fields, but here we are working in 3-dimensions.  Define $\psi_{\delta} \in \bigl(L^2_{sol}(\T^3)\bigr)^3$ by
\begin{equation}
\label{definepsi}
\psi_{\delta}(x) = \delta \nabla \times \left(\frac{i\xi_0 \times P}{|\xi_0|^2}h_0(x)e^{ix\cdot\xi_0/\delta}\right),
\end{equation}
where $\xi_0 \in \Z^3, \delta ^{-1} \in \Z_+, P \perp \xi_0$ is a constant vector and $h_0 \in C^{\infty}(\T^3)$ is an arbitrary smooth scalar function.  Notice that we can expand the expression for $\psi_{\delta}$ to get
\begin{equation}
\label{psiexpanded}
 \psi_{\delta}(x) = h_0(x)Pe^{ix\cdot\xi_0/\delta} + \delta\bigr[\nabla h_0(x) \times \Bigr( \frac{i\xi_0\times P}{|\xi_0|^2} \Bigl)e^{ix\cdot\xi_0/\delta}\bigl].
\end{equation}

This next lemma gives criteria for these fast oscillating vector fields to be close to $\overline{\text{Im}B}$ in 3-dimensions.  The criteria requires that we introduce a parameter $\zeta$ that localizes the support of the fast oscillating vector field.

%%%%%%%%%%%%%%%%%%%%%%%%%%%%%%%%%%%%%%%%%%%%%%%%%%%%%
\begin{lemma}
\label{inimage}
 Let $x_0 \in \T^3$, $\xi_0 \in \Z^3$ such that $(\omega(x_0), \xi_0) \neq 0$.  Let $h_0\in C^{\infty}(\T^3)$ such that $\supp h_0 \subset B_1(0)$, the ball centered at $0$ of radius $1$.  Let $0 < \zeta < 1$ and define $h_{\zeta}$ by
\begin{equation*}
 h_{\zeta} := h_0\Bigl(\frac{x-x_0}{\zeta}\Bigr).
\end{equation*}
And let
\begin{equation*}
 \psi_{\zeta,\delta}(x):= \delta \nabla \times \left(\frac{i\xi_0 \times P}{|\xi_0|^2}h_{\zeta}(x)e^{ix\cdot\xi_0/\delta}\right),
\end{equation*}
where $P \perp \xi_0$ is a constant vector and $\delta^{-1} \in \Z_+$.
Then there exists $\overline{\psi}_{\zeta,\delta} \in L^2_{sol}$ such that
\begin{equation*}
 \psi_{\zeta,\delta} - B(\overline{\psi}_{\zeta,\delta}) = r_{\zeta} + r_{\delta},
\end{equation*}
where $\|r_{\zeta}\|_{L^2} \leq c_0\zeta^{5/2}$ for some constant $c_0 >0$ that does not depend on $\delta$ and $\|r_{\delta}\|_{L^2} = O(\delta)$.
\end{lemma}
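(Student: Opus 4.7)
The strategy is to construct $\overline{\psi}_{\zeta,\delta}$ via the same curl-type ansatz as $\psi_{\zeta,\delta}$, but replacing the constant vector $P$ with a constant vector $F$ chosen so that $B$ reproduces $P$ to leading order at the frozen coefficient point $x_0$.  Applying (\ref{psiexpanded}) with $h_0$ replaced by $h_\zeta$, we have $\psi_{\zeta,\delta}(x)=h_\zeta(x)Pe^{ix\cdot\xi_0/\delta}+O(\delta)_{L^2}$, and the $O(\delta)$ piece can be absorbed into $r_\delta$.  So it suffices to produce a solenoidal $\overline{\psi}_{\zeta,\delta}$ with $B(\overline{\psi}_{\zeta,\delta}) = h_\zeta(x) P e^{ix\cdot\xi_0/\delta}$ up to admissible errors.

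Using the hypothesis $(\omega(x_0),\xi_0)\neq 0$, I would algebraically select a constant vector $F\in\R^3$ satisfying $F\perp\xi_0$ and $\piperp\bigl(\omega(x_0)\times F\bigr)=P$.  Concretely, set $\lambda=-(P,\omega(x_0))/(\xi_0,\omega(x_0))$ so that $P+\lambda\xi_0\perp\omega(x_0)$; then the equation $\omega(x_0)\times F = P+\lambda\xi_0$ is solvable on $\omega(x_0)^{\perp}$, and the remaining freedom (adding multiples of $\omega(x_0)$) is used to enforce $F\perp\xi_0$.  The non-vanishing of $(\omega(x_0),\xi_0)$ enters here in an essential way: both to define $\lambda$ and to carry out the $F\perp\xi_0$ adjustment.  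Projecting $\omega(x_0)\times F = P+\lambda\xi_0$ onto $\xi_0^{\perp}$ then recovers $P$, since $P\perp\xi_0$ already and $\piperp(\xi_0)=0$.

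With $F$ in hand, define
$$\overline{\psi}_{\zeta,\delta}(x):=\delta\,\nabla\times\left(\frac{i\xi_0\times F}{|\xi_0|^2}\,h_\zeta(x)\,e^{ix\cdot\xi_0/\delta}\right),$$
which is automatically in $L^2_{sol}$ and, by (\ref{psiexpanded}), equals $F\,h_\zeta(x)\,e^{ix\cdot\xi_0/\delta}$ plus a remainder of size $O(\delta)$ in $L^2$.  Then I would compute $B(\overline{\psi}_{\zeta,\delta})=\mathbb{P}_{sol}\bigl(\omega\times\overline{\psi}_{\zeta,\delta}\bigr)$ in two steps: first apply Lemma \ref{solproj} to the amplitude $v(x)=(\omega(x)\times F)h_\zeta(x)$ to swap $\mathbb{P}_{sol}$ for $\piperp$ at $\delta$-order cost; then Taylor expand $\omega(x)=\omega(x_0)+O(|x-x_0|)$ inside $\piperp$.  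On $\supp(h_\zeta)\subset B_\zeta(x_0)$ the remainder is $O(\zeta)$ pointwise, so when multiplied by $h_\zeta$ its $L^2$ norm is controlled by $\zeta\cdot\|h_\zeta\|_{L^2}\lesssim\zeta\cdot\zeta^{3/2}=\zeta^{5/2}$, giving $r_\zeta$; the leftover $\delta$-order errors from (\ref{psiexpanded}) and Lemma \ref{solproj} combine into $r_\delta$.  The only real obstacle is the algebraic solvability step for $F$; once that is set up, the remainder is routine bookkeeping of the two small parameters $\zeta$ and $\delta$.
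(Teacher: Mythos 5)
Your proposal is correct and follows essentially the same route as the paper: construct the preimage with the same curl ansatz but with $P$ replaced by a constant vector $Q\perp\xi_0$ satisfying $\piperp(\omega(x_0)\times Q)=P$, apply Lemma \ref{solproj} to trade $\mathbb{P}_{sol}$ for $\piperp$ at $O(\delta)$ cost, and use the Lipschitz continuity of $\omega$ together with $\|h_\zeta\|_{L^2}=\zeta^{3/2}\|h_0\|_{L^2}$ to get the $\zeta^{5/2}$ bound. Your explicit algebraic construction of the vector $F$ is simply a hands-on verification of the bijectivity of $v\mapsto\piperp(\omega(x_0)\times v)$ that the paper asserts, so the two arguments coincide in substance.
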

%%%%%%%%%%%%%%%%%%%%%%%%%%%%%%%%%%%%%%%%%%%%%%%%%%%%

\begin{proof}
 First we find an appropriate constant vector $Q\perp \xi_0$ to play the role of $P$ in our preimage $\overline{\psi}_{\zeta,\delta}$.  Let $T:\piperp (\R^3) \to \piperp (\R^3)$ be defined by
\begin{equation*}
 Tv := \piperp (\omega(x_0) \times v).
\end{equation*}
Our assumption that $(\omega(x_0),\xi_0)\neq 0$ implies that $T$ is a bijection on $\piperp (\R^3)$.   Hence there is a constant vector $Q \perp \xi_0$ such that $P = \piperp (\omega(x_0)\times Q)$.

Define the vector field $\overline{\psi}_{\zeta,\delta} \in C^{\infty}_{sol}(\T^3)$ by
\begin{equation*}
 \overline{\psi}_{\zeta,\delta}(x):= \delta \nabla \times \left(\frac{i\xi_0 \times Q}{|\xi_0|^2}h_{\zeta}(x)e^{ix\cdot\xi_0/\delta}\right).
\end{equation*}
Then from the expansion (\ref{psiexpanded}) and the linearity of $B$ we have
\begin{equation*}
 B(\overline{\psi}_{\zeta,\delta}) = B(h_{\zeta}Qe^{i(\cdot)\cdot\xi_0}) + \delta B\bigr[\nabla h_{\zeta} \times \Bigr( \frac{i\xi_0\times Q}{|\xi_0|^2} \Bigl)e^{ix\cdot\xi_0/\delta}\bigl].
\end{equation*}
We may also expand $\psi_{\zeta,\delta}$ as in (\ref{psiexpanded}) to get
\begin{equation}
\label{estimate}
 \psi_{\zeta,\delta} - B(\overline{\psi}_{\zeta,\delta}) = h_{\zeta}Pe^{i(\cdot)\cdot\xi_0/\delta} - B(h_{\zeta}Qe^{i(\cdot)\cdot\xi_0}) + \delta R_1,
\end{equation}
where
\begin{equation*}
R_1= \bigr[\nabla h_{\zeta} \times \Bigr( \frac{i\xi_0\times P}{|\xi_0|^2} \Bigl)\bigl] - B\bigr[\nabla h_{\zeta} \times \Bigr( \frac{i\xi_0\times Q}{|\xi_0|^2} \Bigl)e^{ix\cdot\xi_0/\delta}\bigl].
\end{equation*}
  Hence, $\|R_1\|_{L^2} \leq \Bigl(\frac{|P|}{|\xi_0|}\|h_{\zeta}\|_{H^1} + \frac{|Q|}{|\xi_0|}\|B\|_{\mathcal{L}(L^2)}\|h_{\zeta}\|_{H^1}\Bigr)$.  Notice that $\|B\|_{\mathcal{L}(L^2)} \leq \|\omega\|_{L^{\infty}}$, so
\begin{equation}
\label{R_1}
\|R_1\|_{L^2} \leq  \Bigl(\frac{|P|}{|\xi_0|} + \frac{|Q|}{|\xi_0|}\|\omega\|_{L^{\infty}}\Bigr)\|h_{\zeta}\|_{H^1}.
\end{equation}

To get a bound on the main order term of the RHS of (\ref{estimate}) we first use Lemma \ref{solproj} to compute
\begin{align*}
 B(h_{\zeta}Qe^{i(\cdot)\cdot\xi_0/\delta}) &:= \mathbb{P}_{sol}(\omega \times h_{\zeta}Qe^{i(\cdot)\cdot\xi_0/\delta})\\
& = h_{\zeta}\piperp(\omega\times Q)e^{i(\cdot)\cdot\xi_0/\delta} + R_{\delta},
\end{align*}
where
\begin{equation}
\label{rdelta}
\|R_{\delta}\|_{L^2} \leq \delta \frac{C}{|\xi_0|}\|h_{\zeta}\omega\|_{H^1}.
 \end{equation}
 Define
\begin{equation*}
r_{\zeta}:= h_{\zeta}Pe^{i(\cdot)\cdot\xi_0/\delta} - h_{\zeta}\piperp(\omega\times Q)e^{i(\cdot)\cdot\xi_0/\delta}.
 \end{equation*}
Then we may write the main order term from the RHS of (\ref{estimate}) as
\begin{equation}
\label{otherpart}
 h_{\zeta}Pe^{i(\cdot)\cdot\xi_0/\delta} - B(h_{\zeta}Qe^{i(\cdot)\cdot\xi_0/\delta}) = r_{\zeta} + R_{\delta}.
\end{equation}
We will demonstrate that $\|r_{\zeta}\|_{L^2} \leq c_0\zeta^{5/2}$ where the constant $c_0$ is positive and does not depend on $\delta$.
Since $P = \piperp (\omega(x_0)\times Q)$ and $\supp h_{\zeta}$ is contained in the ball of radius $\zeta$ centered at $x_0$, $B_{\zeta}(x_0)$, we have
\begin{align}
\nonumber
\|r_{\zeta}\|_{L^2} &= \|h_{\zeta}Pe^{i(\cdot)\cdot\xi_0/\delta} - h_{\zeta}\piperp(\omega\times Q)e^{i(\cdot)\cdot\xi_0/\delta}\|_{L^2}\\
 \label{zetabound}&\leq \|h_{\zeta}\|_{L^2}\|\piperp \bigl(\omega(x_0) - \omega(\cdot)) \times Q)\bigr)\|_{L^{\infty}(B_{\zeta}(x_0))}.
\end{align}
Since $\omega(x)$ is Lipschitz and for any $x\in \supp h_{\zeta}$, $|x-x_0| \leq \zeta$, it follows that
\begin{equation*}
\|\piperp \bigl(\omega(x_0) - \omega(\cdot)) \times Q)\bigr)\|_{L^{\infty}(B_{\zeta}(x_0))} \leq \zeta \|\omega\|_{Lip}|Q|.
\end{equation*}
And since $\|h_{\zeta}\|_{L^2} = \zeta^{3/2}\|h_0\|_{L^2}$, we have from estimate (\ref{zetabound}) that
\begin{equation*}
\|r_{\zeta}\|_{L^2}\leq \zeta^{5/2}\|h_0\|_{L^2}\|\omega\|_{Lip}|Q|.
\end{equation*}
 Let $c_0 = \|h_0\|_{L^2}\|\omega\|_{Lip}|Q|$, which is independent of $\delta$.  Now define $r_{\delta} := \delta R_1 + R_{\delta}$.    Therefore, from (\ref{estimate}) and (\ref{otherpart}) we have
\begin{equation*}
 \psi_{\zeta,\delta} - B(\overline{\psi}_{\zeta,\delta}) = r_{\zeta} + r_{\delta}.
\end{equation*}
From (\ref{R_1}) and (\ref{rdelta}) we have $\|r_{\delta}\|_{L^2} = O(\delta)$.
\end{proof}
%%%%%%%%%%%%%%%%%%%%%%%%%%%%%%%%%%%%%%%%%%%%%%%
%%%%%%%%%%%%%%%%%%%%%%%%%%%%%%%%%%%%%%%%%%%%%%%

\begin{remark}
For fast oscillating vector fields like $\psi_{\delta}$ in 2-dimensions, $(\omega,\xi_0)\equiv 0$, so this lemma does not give us any information about $\overline{\text{Im}B}$ in 2-dimensions.
\end{remark}

\section{2-dimensional high frequency vector fields}
\label{2Dclassifysection}
We treat 2-dimensional flows as 3-dimensional planar flows to get a simplified form of the operator $B$ in 2-dimensions.
Thus
\begin{equation*}
 Bv:= \omega \times v - \nabla \alpha,
\end{equation*}
can be simplified to
\begin{equation}
\label{2DB}
 Bv = \omega \cdot v^{\perp} - \nabla \alpha = \mathbb{P}_{sol}(\omega \cdot v^{\perp}),
\end{equation}
when $v\in (L^2_{sol}(\T^2))^2$.  In (\ref{2DB}) $\omega$ is now the scalar vorticity.  The pressure $\nabla \alpha \in (L^2(\T^2))^2$ is determined by the requirement that $Bv$ be divergence free  and $\mathbb{P}_{sol}$ is the orthogonal projection onto divergence free vector fields.

Let $\phi_{\delta} \in (L^2_{sol}(\T^2))^2$ be defined by,
\begin{equation}
 \label{phidelta}
\phi_{\delta}(x):= -i\delta \nabla^{\perp}(h_0(x)e^{ix\cdot\xi_0/\delta}),
\end{equation}
where $\xi_0 \in \Z^2, \delta ^{-1} \in \Z_+, P \perp \xi_0$ is constant and $h_0 \in C^{\infty}(\T^2)$ is an arbitrary smooth scalar function.
We can expand $\phi_{\delta}$ as follows:
\begin{equation}
\label{phiexpanded}
 \phi_{\delta}(x) = h_0(x)\xi_0^{\perp}e^{ix\cdot\xi_0/\delta} -i\delta\bigl[e^{ix\cdot\xi_0/\delta}\nabla^{\perp}h_0(x)\bigr].
\end{equation}

In this next Lemma we establish criteria for $\phi_{\delta}$ to be near $\overline{\text{Im}B}$.  Our criteria is based on $\nabla \omega$, the gradient of the scalar vorticity of the equilibrium solution $u$.

%%%%%%%%%%%%%%%%%%%%%%%%%%%%%%%%%%%%%%%%%%%%%%%%%%%%%%%%%%%%%%%%%%%%%%%%%%%%%%%%%%%%%%%%%%%
\begin{lemma}
\label{phideltainIm}
Define $\phi_{\delta}$ as in (\ref{phidelta}) above.  If there is a constant $c_0$ such that $|(\xi_0^{\perp},\nabla \omega (x))|> c_0$ on $\supp h_0$, then there exists a remainder $r_{\delta}\in L^2$ such that $\phi_{\delta} + r_{\delta} \in \overline{\text{Im}B}$ and $\|r_{\delta}\|_{L^2}= O(\delta)$.
\end{lemma}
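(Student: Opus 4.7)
My plan is to construct an explicit $\overline{\phi}_\delta \in L^2_{sol}$ with $B\overline{\phi}_\delta = \phi_\delta + r_\delta$ and $\|r_\delta\|_{L^2} = O(\delta)$, in analogy with Lemma \ref{inimage}, exploiting the two-dimensional simplification of $B$. The key preliminary identity is that for any scalar $\chi \in H^1(\T^2)$, setting $v = \nabla^\perp \chi$ we have $v^\perp = -\nabla\chi$, so
$$\omega v^\perp = -\omega\nabla\chi = -\nabla(\omega\chi) + \chi\nabla\omega,$$
and since $\mathbb{P}_{sol}$ annihilates gradients this yields
$$B(\nabla^\perp\chi) = \mathbb{P}_{sol}(\chi\,\nabla\omega).$$
This collapses the problem of inverting $B$ on a curl ansatz into a single application of Lemma \ref{solproj}.

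I would then seek $\overline{\phi}_\delta = \nabla^\perp \chi_\delta$ with $\chi_\delta(x) = F(x) e^{ix\cdot\xi_0/\delta}$, and choose $F$ by matching the leading oscillatory amplitude. Applying Lemma \ref{solproj} with the $H^1$ vector field $F\nabla\omega$ gives
$$B\overline{\phi}_\delta = \mathbb{P}_{sol}\bigl(F\nabla\omega\,e^{ix\cdot\xi_0/\delta}\bigr) = F\,\piperp(\nabla\omega)\,e^{ix\cdot\xi_0/\delta} + O(\delta).$$
In two dimensions $\piperp(\nabla\omega) = \frac{\nabla\omega\cdot\xi_0^\perp}{|\xi_0|^2}\xi_0^\perp$, while (\ref{phiexpanded}) gives $\phi_\delta = h_0\xi_0^\perp e^{ix\cdot\xi_0/\delta} + O(\delta)$. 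Matching the leading coefficients forces
$$F(x) := \frac{|\xi_0|^2\,h_0(x)}{\nabla\omega(x)\cdot\xi_0^\perp}.$$
The hypothesis $|(\xi_0^\perp,\nabla\omega(x))| > c_0$ on $\supp h_0$ is precisely what this step requires: by continuity the denominator is bounded away from zero on a neighborhood of $\supp h_0$, so $F$ is smooth there and extends by zero to a smooth (hence $H^1$) function on $\T^2$. Then $\overline{\phi}_\delta \in L^2_{sol}$, and combining the displays with (\ref{phiexpanded}) yields $r_\delta := B\overline{\phi}_\delta - \phi_\delta$ with $\|r_\delta\|_{L^2} = O(\delta)$, so $\phi_\delta + r_\delta = B\overline{\phi}_\delta \in \text{Im}B \subset \overline{\text{Im}B}$.

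The subtle point I anticipate is amplitude bookkeeping. A naive ansatz $\overline{\phi}_\delta = -i\delta\nabla^\perp(g_0 e^{ix\cdot\xi_0/\delta})$ of the same shape and $L^2$ amplitude as $\phi_\delta$ itself cannot succeed, because the leading term $-\omega g_0\xi_0 e^{ix\cdot\xi_0/\delta}$ of $\omega\overline{\phi}_\delta^\perp$ lies along $\xi_0$ and is killed by $\mathbb{P}_{sol}$ up to $O(\delta)$, making $B\overline{\phi}_\delta$ itself already $O(\delta)$ and unable to match the $O(1)$ leading term of $\phi_\delta$. My construction deliberately lets $\overline{\phi}_\delta$ have $L^2$ norm of order $1/\delta$ (perfectly admissible for each fixed $\delta$, though not uniformly in $\delta$), and the curl identity then reduces the whole problem to scalar algebra in which the transversality condition on $\xi_0^\perp$ and $\nabla\omega$ is exactly what permits the needed division when defining $F$.
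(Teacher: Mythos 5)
Your proposal is correct and follows essentially the same route as the paper: the paper also takes the preimage $\nabla^\perp(g_0 e^{ix\cdot\xi_0/\delta})$ with $g_0 = |\xi_0|^2 h_0/(\xi_0^\perp,\nabla\omega)$ (your $F$), reduces $B$ on this curl ansatz to $\pm\mathbb{P}_{sol}(g_0 e^{ix\cdot\xi_0/\delta}\nabla\omega)$ by peeling off a gradient, and then applies Lemma \ref{solproj}. Your observation that the preimage must have $L^2$ norm of order $1/\delta$ is a correct piece of bookkeeping that the paper leaves implicit.
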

%%%%%%%%%%%%%%%%%%%%%%%%%%%%%%%%%%%%%%%%%%
\begin{proof}
Assume there exists a constant $c_0$ such that $|(\xi_0^{\perp},\nabla \omega (x))|> c_0$ on $\supp h_0$.  Then we can define a function $g_0 \in C^{\infty}(\T^2)$ by
\begin{equation}
\label{g0}
g_0(x):= \frac{|\xi_0|^2h_0(x)}{(\xi_0^{\perp},\nabla \omega(x))},
\end{equation}
and define a vector field $v \in C^{\infty}_{sol}(\T^2)$ by,
\begin{equation*}
v(x):= \nabla^{\perp}(g_0(x)e^{ix\cdot\xi_0/\delta}).
\end{equation*}
From (\ref{2DB}) the operator $B$ on $\nabla^{\perp}(g_0(x)e^{ix\cdot\xi_0/\delta})$ takes this simplified form:
\begin{align*}
Bv &= \mathbb{P}_{sol}\bigl(\omega \nabla(g_0(x)e^{ix\cdot\xi_0/\delta})\bigr)\\
&= \mathbb{P}_{sol}\bigl(\nabla (\omega g_0e^{ix\cdot\xi_0/\delta})\bigr) - \mathbb{P}_{sol}\bigl(g_0(x)e^{ix\cdot\xi_0/\delta}\nabla \omega\bigr)\\
&= - \mathbb{P}_{sol}(g_0(x)e^{ix\cdot\xi_0/\delta}\nabla \omega),
\end{align*} since the gradient of a function is irrotational and, hence, orthogonal to the space of divergence free vector fields.  If we apply Lemma \ref{solproj} we have
\begin{equation*}
Bv = -g_0(x)\mathbb{P}_{\xi_0^{\perp}}(\nabla\omega)e^{ix\cdot\xi_0/\delta} + \tilde{r}_{\delta},
\end{equation*}
where $\|\tilde{r}_{\delta}\|_{L^2} = O(\delta)$.  Substitute our definition for $g_0$ from (\ref{g0}) to get
\begin{equation*}
Bv = \xi_0^{\perp}h_0(x)e^{ix\cdot\xi_0/\delta} + \tilde{r}_{\delta}.
\end{equation*}
Then the expansion (\ref{phiexpanded}) for $\phi_{\delta}$ implies
\begin{equation*}
Bv = -i\delta\nabla^{\perp}(h_0(x)e^{ix\cdot\xi_0/\delta}) + r_{\delta} =: \phi_{\delta} + r_{\delta},
\end{equation*}
where $r_{\delta}:= \tilde{r}_{\delta} -i\delta\bigl[e^{ix\cdot\xi_0/\delta}\nabla^{\perp}h_0(x)\bigr]\in L^2$ and $\|r_{\delta}\|_{L^2} = O(\delta)$.  Thus we have $\phi_{\delta} + r_{\delta} \in \overline{\text{Im}B}$.
\end{proof}
%%%%%%%%%%%%%%%%%%%%%%%%%%%%%%%%%%%%%%%%%%%
%%%%%%%%%%%%%%%%%%%%%%%%%%%%%%%%%%%%%%%%%%%

This lemma establishes criteria for measuring the factor space norm of our fast oscillating vector fields.  We must introduce the parameter $\zeta$ to localize the support of these fields.  Recall that our factor space $F:= L^2_{sol}(\T^2)/\overline{\text{Im}B}$, with the canonical factor space norm we denote $\|\cdot\|_F$.

%%%%%%%%%%%%%%%%%%%%%%%%%%%%%%%%%%%%%%%%%%%%%%%%%%%%%%%%%%%%%%%%%%%%%%%%%%%%%%%%%%%%%%%%%%%%%%%%%%%%%%%%%%%%%%%%%%%%%%%
\begin{lemma}
\label{phideltainKer}
Let $x_0 \in \T^n$, $\xi_0\in \R^n$ such that $\nabla \omega (x_0) \neq 0$ and $(\xi_0^{\perp},\nabla \omega(x_0))= 0$.  Let $h_0\in C^{\infty}(\T^n)$ be supported on $B_1(0)$, the ball of radius $1$ centered at $0$ such that $h_0(0)=1$.  For $0<\zeta<<1$ define $h_{\zeta}$ by
\begin{equation*}
h_{\zeta}(x):= h_0\bigl(\frac{x-x_0}{\zeta}\bigr),
\end{equation*}
and let $\delta^{-1}\in \Z_+$.  For any $x \in [0,1)\times [0,1)$ define
\begin{equation*}
\phi_{\zeta,\delta}(x):= -i\delta \nabla^{\perp}(h_{\zeta}(x)e^{ix\cdot\xi_0/\delta}),
\end{equation*}
and extend $\phi_{\zeta,\delta}$ periodically.  Then we have
\begin{equation*}
 \|\phi_{\zeta,\delta}\|_F = \|\phi_{\zeta,\delta}\|_{L^2} + O(\zeta) + O(\delta),
\end{equation*}
where $O(\zeta)$ is independent of $\delta$ and $O(\delta)$ is independent of $\zeta$.
\end{lemma}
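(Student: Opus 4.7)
The plan is to establish both inequalities. The upper bound $\|\phi_{\zeta,\delta}\|_F \le \|\phi_{\zeta,\delta}\|_{L^2}$ is immediate from the definition of the quotient norm (take $0\in\overline{\text{Im}B}$). For the lower bound I would invoke the Hilbert-space structure of $L^2_{sol}(\T^2)$: a short integration by parts shows that $B$ is skew-adjoint, $B^{\ast}=-B$, so $(\overline{\text{Im}B})^{\perp} = \ker B = \{v\in L^2_{sol} : v\cdot\nabla\omega = 0 \text{ a.e.}\}$, and consequently $\|\phi_{\zeta,\delta}\|_F$ coincides with $\|P_{\ker B}\phi_{\zeta,\delta}\|_{L^2}$. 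Via the usual duality between a projection norm and an inner product, it then suffices to produce a single $\psi\in\ker B$ with $\|\psi-\phi_{\zeta,\delta}\|_{L^2} = O(\zeta)+O(\delta)$; combining $\|\phi_{\zeta,\delta}\|_F \ge \langle\phi_{\zeta,\delta},\psi\rangle/\|\psi\|_{L^2}$ with Cauchy--Schwarz and the triangle inequality then yields $\|\phi_{\zeta,\delta}\|_F \ge \|\phi_{\zeta,\delta}\|_{L^2} - O(\zeta) - O(\delta)$.

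For the construction, the hypothesis $\xi_0^{\perp}\perp\nabla\omega(x_0)$ with $\nabla\omega(x_0)\neq 0$ forces $\nabla\omega(x_0) = c\xi_0$ for some nonzero scalar $c$, so level sets of $\omega$ near $x_0$ are nearly perpendicular to $\xi_0$. Let $x_{\ast}(s)$ denote the point at level $\omega = s$ on the integral curve of $\nabla\omega/|\nabla\omega|^2$ through $x_0$, and set
\[
G(s) := -i\delta\, h_{\zeta}\bigl(x_{\ast}(s)\bigr)\, e^{i x_{\ast}(s)\cdot\xi_0/\delta},
\]
multiplied by a smooth cutoff in $s$ supported near $\omega(x_0)$ so that $G$ is smooth and compactly supported. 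Then $\psi(x) := \nabla^{\perp}\bigl[G(\omega(x))\bigr] = G'(\omega(x))\,\nabla^{\perp}\omega(x)$ is automatically divergence-free, and $\psi\cdot\nabla\omega = G'(\omega)\,(\nabla^{\perp}\omega\cdot\nabla\omega) = 0$, so $\psi\in\ker B$.

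To bound $\|\phi_{\zeta,\delta}-\psi\|_{L^2}$, I write $\phi_{\zeta,\delta} = \nabla^{\perp} F$ with $F(x) = -i\delta h_{\zeta}(x)e^{i x\cdot\xi_0/\delta}$, so the error is $\nabla^{\perp}\bigl[F(x) - F(x_{\ast}(\omega(x)))\bigr]$. The decisive geometric point is that, for $x\in B_{\zeta}(x_0)$, the displacement $x - x_{\ast}(\omega(x))$ lies along the level set of $\omega$ through $x$ and is therefore nearly orthogonal to $\nabla\omega(x_{\ast})\approx c\xi_0$; a second-order expansion of $\omega$ at $x_0$ shows $(x - x_{\ast}(\omega(x)))\cdot\xi_0 = O(\zeta^2)$ uniformly on $B_{\zeta}(x_0)$. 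Combined with the uniform bound $\|\nabla F\|_{L^{\infty}(B_{\zeta}(x_0))} = O(1)$, this yields the pointwise estimate $|\nabla F(x) - G'(\omega(x))\nabla\omega(x)| = O(1)$, and integrating over the $O(\zeta^2)$-area support delivers the $O(\zeta)$ contribution to $\|\phi_{\zeta,\delta} - \psi\|_{L^2}$; the $O(\delta)$ term comes from the subleading piece of the expansion \eqref{phiexpanded} together with the solenoidal-projection error from Lemma \ref{solproj}, applied exactly as in the proof of Lemma \ref{inimage}.

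The main obstacle is the interaction between the rapidly oscillating phase $e^{ix\cdot\xi_0/\delta}$ and the $O(\zeta)$-size separation $|x - x_{\ast}(\omega(x))|$: a crude Taylor expansion of the phase would produce an uncontrolled factor $O(\zeta/\delta)$ that blows up as $\delta\to 0$. The alignment hypothesis $\nabla\omega(x_0)\parallel\xi_0$ is precisely the ingredient that rescues the argument, since it forces the phase-relevant projection $(x - x_{\ast})\cdot\xi_0$ to be quadratic rather than linear in $\zeta$; this keeps $e^{i(x-x_{\ast})\cdot\xi_0/\delta}$ bounded regardless of the ratio $\zeta^2/\delta$, leaving only the benign pointwise $O(1)$ error that integrates to $O(\zeta)$ in $L^2$.
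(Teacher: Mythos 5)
Your structural reduction is sound and matches what the paper uses implicitly: $B$ is indeed skew-adjoint on $L^2_{sol}$, so $(\overline{\text{Im}B})^{\perp}=\text{Ker}B$, $\|\phi_{\zeta,\delta}\|_F=\|\mathbb{P}_{\text{Ker}B}\phi_{\zeta,\delta}\|_{L^2}$, and it suffices to exhibit $\psi\in\text{Ker}B$ with $\|\phi_{\zeta,\delta}-\psi\|_{L^2}$ small. The gap is in your construction of $\psi$ and the error estimate. Your $\psi(x)=G'(\omega(x))\nabla^{\perp}\omega(x)$ carries the phase $e^{ix_{*}(\omega(x))\cdot\xi_0/\delta}$, whereas $\phi_{\zeta,\delta}$ carries $e^{ix\cdot\xi_0/\delta}$; the exponents differ by $(x-x_{*})\cdot\xi_0/\delta=O(\zeta^2/\delta)$. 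You correctly observe that the alignment $\nabla\omega(x_0)\parallel\xi_0$ improves $O(\zeta/\delta)$ to $O(\zeta^2/\delta)$, but this is still unbounded in the regime that the lemma is actually used in (Proposition \ref{2DlowerbndF}(ii) sends $\delta\to0$ \emph{first}, with $\zeta$ fixed). Once $\delta\lesssim\zeta^2$ the two phases decorrelate, the cross terms average out, and $\|\phi_{\zeta,\delta}-\psi\|_{L^2}$ is comparable to $\|\phi_{\zeta,\delta}\|_{L^2}$ itself. Your fallback — ``the pointwise error is $O(1)$ on a support of area $O(\zeta^2)$, hence $O(\zeta)$ in $L^2$'' — is then exactly the trivial bound, since $\|\phi_{\zeta,\delta}\|_{L^2}$ is itself $O(\zeta)+O(\delta)$; the inequality $0\le\|\phi_{\zeta,\delta}\|_F\le\|\phi_{\zeta,\delta}\|_{L^2}$ already gives the literal statement with no work. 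The content the lemma must deliver (and which the paper's proof does deliver) is an error that is $O(\zeta)$ \emph{relative} to $\|h_{\zeta}\|_{L^2}$, i.e.\ $O(\zeta^2)+O(\delta)$ absolutely — this is the $O(\zeta^2)$ that reappears in line (\ref{opphiF}) — and your construction cannot produce it.

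The paper avoids the problem by never touching the phase: it keeps $e^{ix\cdot\xi_0/\delta}$ exactly and only rotates the polarization vector, setting $\eta(x):=\xi_0^{\perp}-\frac{(\xi_0^{\perp},\nabla\omega(x))}{|\nabla\omega(x)|^2}\nabla\omega(x)$, so that $\eta(x)\cdot\nabla\omega(x)\equiv0$ pointwise and $|\eta(x)-\xi_0^{\perp}|=O(\zeta)$ on $\supp h_{\zeta}$ by Lipschitz continuity of $\nabla\omega$ together with $(\xi_0^{\perp},\nabla\omega(x_0))=0$. Then $\psi:=h_{\zeta}\eta\, e^{i(\cdot)\cdot\xi_0/\delta}$ satisfies $\|\phi_{\zeta,\delta}-\psi\|_{L^2}\le O(\zeta)\|h_{\zeta}\|_{L^2}+O(\delta)=O(\zeta^2)+O(\delta)$, with each constant independent of the other parameter. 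If you want to repair your argument, replace the reparametrization along integral curves of $\nabla\omega$ by this pointwise projection of the amplitude onto $\nabla\omega(x)^{\perp}$; the rest of your duality argument then goes through. (A minor additional point: Lemma \ref{solproj} plays no role here, since no solenoidal projection is being applied — your $O(\delta)$ term comes solely from the $\delta\nabla^{\perp}h_{\zeta}$ piece of the expansion (\ref{phiexpanded}).)
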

%%%%%%%%%%%%%%%%%%%%%%%%%%%%%%%%%%%%%%%%%%%%%%%%%%%%%%%%%%%%%%%%%%%%%%%%%%%%%%%%%%%%%%%%%%%%%%%%%%%
\begin{remark}
\label{zetarmk}
The conditions on $x_0$ and $\xi_0$ imply that $\xi_0$ is a scalar multiple of $\nabla \omega(x_0)$, so we cannot require $\xi_0\in \Z^2$ here.  To ensure that $\phi_{\zeta,\delta}$ is periodic, we define the vector field on $B_{\zeta}(x_0)$ and, since $\zeta << 1$, we may extend it periodically.
\end{remark}

\begin{proof}
Since $B$ maps into $L^2_{sol}$, we can say $v \in \text{Ker}B$ if and only if $v \in \text{Ker}T$ where $T:(L^2_{sol}(\T^2))^2 \to (L^2_{sol}(\T^2))^2$ is defined by
\begin{equation*}
 Tv := \curl Bv = v\cdot \nabla \omega.
\end{equation*}
For any $x\in \supp (h_{\zeta})$, $|x-x_0| \leq \zeta$, so
\begin{equation*}
|\nabla \omega (x) - \nabla \omega (x_0)| \leq \zeta K,
\end{equation*}
where $K := \|\nabla\omega\|_{Lip}$ is the Lipschitz norm of $\nabla\omega$.  We may assume $\zeta << |\nabla\omega(x_0)|$, so for any $x\in \supp h_{\zeta}$
\begin{equation*}
|\nabla\omega (x)| \geq |\nabla\omega(x_0)| - \zeta K> 0.
\end{equation*}
We assume $(\xi_0^{\perp},\nabla\omega(x_0)) = 0$, so we have
\begin{equation*}
\frac{|(\xi_0^{\perp},\nabla\omega(x))|}{|\nabla\omega(x)|} = \frac{|(\xi_0^{\perp},\nabla\omega(x))-(\xi_0^{\perp},\nabla\omega(x_0)) |}{|\nabla\omega(x)|}
\leq \frac{\zeta|\xi_0^{\perp}|K}{|\nabla\omega(x_0)|-\zeta K}.
\end{equation*}
For any $x \in \supp h_{\zeta}$, let
\begin{equation}
\label{eta}
\eta(x):= \ \xi_0^{\perp} - \frac{(\xi_0^{\perp},\nabla\omega(x))}{|\nabla\omega(x)|^2}\nabla\omega(x)
  = \ \xi_0^{\perp} - O(\zeta)\frac{\nabla\omega(x)}{|\nabla\omega(x)|}.
\end{equation}
We can expand $\phi_{\zeta,\delta}$ as in (\ref{phiexpanded}) and compute
\begin{equation*}
 \phi_{\zeta, \delta}(x) = h_{\zeta}(x)\xi_0^{\perp}e^{ix\cdot\xi_0/\delta} + r_{\delta},
\end{equation*}
where $\|r_{\delta}\|_{L^2} \leq \delta C\|\nabla h_{\zeta}\|_{L^2}$.  Notice that in 2-dimensions, $\|\nabla h_{\zeta}\|_{L^2} = \|\nabla h_0\|_{L^2}$, so $\|r_{\delta}\|_{L^2}\leq \delta C \|\nabla h_0\|_{L^2}$, which is independent of $\zeta$. We also have from the definition of $\eta$ in (\ref{eta}) that
\begin{equation*}
\phi_{\zeta, \delta}(x) = h_{\zeta}(x)\eta(x)e^{ix\cdot\xi_0/\delta} + r_{\zeta}+ r_{\delta},
\end{equation*}
where $\|r_{\zeta}\|_{L^2} = O(\zeta)$ independent of $\delta$.
Since $(T\eta)(x):= \eta(x)\cdot \nabla\omega(x) \equiv 0$, we have $\phi_{\zeta,\delta} - r_{\zeta} - r_{\delta} \in \text{Ker}B$.  Therefore, $\|\phi_{\zeta,\delta}\|_F = \|\phi_{\zeta,\delta}\|_{L^2}+ O(\zeta) + O(\delta)$.

\end{proof}

%%%%%%%%%%%%%%%%%%%%%%%%%%%%%%%%%%%%%%%%%%%%%%%%%%%%%%%%%%%%

\section{Approximating $G(t)$ on high frequency vector fields}
\label{approxG(t)}

This section details the method of approximating $G(t)$ on high frequencies by a pseudodifferential operator and some immediate consequences of that approximation.  We also derive the bicharacteristic amplitude system (BAS) in terms of the symbol of this pseudodifferential operator.   We finish the section with several lemmas to be used in proving the main theorems.  Lemmas \ref{sigmaS-1} gives us estimates for the norm of certain pseudodifferential operators and Lemmas \ref{imagepsilemma} and \ref{imagephilemma} motivate the structure of our high frequency vector fields.

We introduce an $\varepsilon$-psuedodifferential operator to separate vector fields into their high- and low-frequency parts.
Let $\varepsilon >0$, for any amplitude $\sigma \in C^{\infty}(\T^n \times \R^n)$ (satisfying appropriate conditions to be specified later) define
\begin{equation}
\label{opepsdef}
 (\opeps[\sigma]w)(x) :=\ \frac{1}{(2\pi\varepsilon)^n}\int  \sigma(x,\xi) e^{i\xi\cdot(x-y)/\varepsilon}w(y)\ dyd\xi.
\end{equation}
Let $\chi(\xi) \in C^{\infty}(\R^n)$ be a function of $|\xi|$ only, with $0 \leq \chi(\xi) \leq 1$, and
\begin{equation*}
\chi (\xi) = \begin{cases}
                 1& \text{if } |\xi| \leq \frac{1}{2},\\
                 0& \text{if } |\xi| \geq \frac{2}{3}.\end{cases}
\end{equation*}
Then
\begin{equation*}
 G(t) = G(t)\circ \opeps\Bigl[1-\chi\big(\frac{\xi}{\sqrt{\varepsilon}}\bigr)\Bigr] + G(t)\circ \opeps\Bigl[\chi\big(\frac{\xi}{\sqrt{\varepsilon}}\bigr)\Bigr].
\end{equation*}  

Our focus on high frequency vector fields is a consequence of Nussbaum's formula for computing the essential spectral radius of a bounded linear operator, so we define the essential spectrum and state the formula here.  A proof can be found in Nussbaum's original paper, \cite{nussbaum}.  

We may introduce the following classification of points in the spectrum of a bounded linear operator $T$:
\begin{equation*}
\sigma(T) = \sigma_{disc}(T)\cup \sigma_{ess}(T),
\end{equation*}
where we define $\sigma_{disc}$ and $\sigma_{ess}$ below.
\begin{definition}
        For any bounded linear operator $T$ on a separable Hilbert space $\mathcal{H}$ we define the discrete spectrum of T, $\sigma_{disc}(T)$, to be the set of $\lambda \in \sigma(T)$ such that following conditions holds:
        \begin{enumerate}
        \item[$\bullet$] $\lambda$ is isolated in $\sigma(T)$,
        \item[$\bullet$] The Riesz projector $P= \frac{1}{2\pi i} \oint_{\gamma} \frac{dz}{z-T}$, where $\gamma$ is a small circle around $\lambda$, has finite rank,
        \item[$\bullet$] $\lambda-T$ is invertible on the invariant subspace Ker$P$ = Im(I-P),
        \end{enumerate}
        The essential spectrum of $T$ is defined by $\sigma_{ess}(T):= \sigma(T)\setminus \sigma_{disc}(T).$
        \end{definition}

We denote the essential spectral radius of an bounded linear operator $T$ by $r_{ess}(T):= \sup\{|\lambda|: \lambda \in \sigma_{ess}(T)\}$.

Let $X$ be a separable Hilbert space.  We define an appropriate norm on the quotient space $\mathcal{L}(X) / \mathfrak{S_{\infty}}$ where $\mathfrak{S_{\infty}}$ is the ideal of compact operators.
\begin{definition}
For any $T \in \mathcal{L}(X)$
\begin{equation}
\|T\|_{\mathcal{K}}= \inf_{K\in \mathfrak{S_{\infty}}} \|T + K\|_{\mathcal{L}(X)}.
\end{equation}
\end{definition}

\noindent The seminorm $\|\cdot \|_{\mathcal{K}}$ on $\mathcal{L}(X)$ is the canonic norm on the quotient space $\mathcal{L}(X) / \mathfrak{S_{\infty}}$.  We can compute the essential spectral radius of a bounded operator with this norm:
\begin{theorem}[Nussbaum]
\label{Nussbaum}
For any $T \in \mathcal{L}(X)$
\begin{equation}
r_{ess}(T) = \lim_{n \to \infty}(\|T^n\|_{\mathcal{K}})^{\frac{1}{n}}.
\end{equation}
\end{theorem}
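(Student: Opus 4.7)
The strategy is to reduce the identity to the Gelfand--Beurling spectral radius formula in a suitable Banach algebra, and then identify the spectrum in that algebra with $\sigma_{ess}(T)$ as defined above.

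\emph{Step 1: Pass to the Calkin algebra.} Since $\mathfrak{S}_{\infty}$ is a closed two-sided ideal in $\mathcal{L}(X)$, the quotient $\mathcal{L}(X)/\mathfrak{S}_{\infty}$ endowed with $\|\cdot\|_{\mathcal{K}}$ is a Banach algebra and the canonical projection $\pi$ is a contractive algebra homomorphism. Hence $\|\pi(T)^n\| = \|T^n\|_{\mathcal{K}}$, and applying the Gelfand--Beurling formula to $\pi(T)$ identifies the right-hand side of the stated identity with the spectral radius $r(\pi(T))$ of $\pi(T)$ in the Calkin algebra. It therefore suffices to prove $r_{ess}(T) = r(\pi(T))$.

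\emph{Step 2: The easy inclusion $r(\pi(T)) \leq r_{ess}(T)$.} Fix $|\lambda| > r_{ess}(T)$, so $\lambda \in \rho(T) \cup \sigma_{disc}(T)$. If $\lambda \in \rho(T)$, then $\pi(\lambda - T)$ is trivially invertible. If $\lambda \in \sigma_{disc}(T)$, the Riesz projector $P$ has finite rank and reduces $T$ on $X = \mathrm{Im}\,P \oplus \mathrm{Ker}\,P$; $\lambda - T$ restricted to $\mathrm{Ker}\,P$ is invertible, while $\mathrm{Im}\,P$ is finite dimensional. Gluing the inverse on $\mathrm{Ker}\,P$ with any extension on $\mathrm{Im}\,P$ produces an inverse of $\lambda - T$ modulo finite-rank (hence compact) operators, so $\lambda \notin \sigma(\pi(T))$.

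\emph{Step 3: The hard inclusion $r_{ess}(T) \leq r(\pi(T))$.} This is the main obstacle. Fix $|\lambda| > r(\pi(T))$. By the spectral radius formula in the Calkin algebra, $\lambda - \pi(T)$ is invertible, so Atkinson's theorem implies $\lambda - T$ is Fredholm. The set $\Omega = \{\mu \in \C : |\mu| > r(\pi(T))\}$ is open and connected; $\mu - T$ is Fredholm on all of $\Omega$, and for $|\mu|$ sufficiently large $\mu - T$ is invertible, hence its Fredholm index is zero. Local constancy of the index forces $\mathrm{ind}(\mu - T) \equiv 0$ on $\Omega$. Now apply the analytic Fredholm theorem to the holomorphic family $\mu \mapsto \mu - T$ on $\Omega$: either this family is nowhere invertible (ruled out by invertibility at $\infty$), or $(\mu - T)^{-1}$ is meromorphic on $\Omega$ with finite-rank principal parts at its poles. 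Each such pole $\mu_0$ is isolated in $\sigma(T) \cap \Omega$ and its Riesz projector has finite rank, placing $\mu_0$ in $\sigma_{disc}(T)$. Therefore $\lambda \in \rho(T) \cup \sigma_{disc}(T)$, so $\lambda \notin \sigma_{ess}(T)$. The crux is marshalling Atkinson's theorem, the local constancy of the index, and the analytic Fredholm theorem to upgrade Fredholmness of $\lambda - T$ into the full Riesz-projector structure required by the paper's definition of $\sigma_{disc}(T)$.
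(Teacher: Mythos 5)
The paper does not prove this theorem; it simply cites Nussbaum's original article, so there is no in-paper argument to compare against. Your proposal is a correct and essentially complete proof in the Hilbert-space setting, and it is a genuinely different route from Nussbaum's: his original argument works in an arbitrary Banach space and proceeds via the Kuratowski measure of noncompactness (showing the radius formula holds simultaneously for several equivalent seminorms), whereas you specialize to the Calkin algebra, invoke the Gelfand--Beurling formula, and then close the gap between the Fredholm (Wolf) spectrum and the Browder-type spectrum used in the paper's definition of $\sigma_{disc}(T)$. That last step is exactly the crux, and your Step 3 handles it correctly: the unbounded component $\Omega$ of the complement of $\sigma(\pi(T))$ consists of Fredholm points of index zero, and the analytic Fredholm alternative turns $\sigma(T)\cap\Omega$ into a discrete set of points with finite-rank Riesz projectors. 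Two small points worth tightening if you write this out in full: (1) the textbook analytic Fredholm theorem is usually stated for families $I-K(\mu)$ with $K$ compact-valued, so for the family $\mu\mapsto \mu-T$ you should either invoke the generalized (Gohberg--Sigal) version for index-zero Fredholm families or reduce to the standard one locally by writing $\mu-T=(\mu-T+F)\bigl(I-(\mu-T+F)^{-1}F\bigr)$ for a finite-rank $F$ making $\mu_0-T+F$ invertible; (2) you should explicitly verify the third bullet of the paper's definition of $\sigma_{disc}$, namely that $\mu_0-T$ is invertible on $\mathrm{Ker}\,P$, which follows from the Riesz functional calculus since $\sigma\bigl(T|_{\mathrm{Ker}P}\bigr)$ omits $\mu_0$. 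What your approach buys is brevity and transparency on a separable Hilbert space, which is all the paper needs; what Nussbaum's buys is validity in general Banach spaces and for more general quotient seminorms.
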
  
Since $G(t)\circ \opeps\Bigl[\chi\big(\frac{\xi}{\sqrt{\varepsilon}}\bigr)\Bigr]$ is a compact operator, we have $r_{ess}(G(t)) = r_{ess}\Bigl(G(t)\circ \opeps\Bigl[1-\chi\big(\frac{\xi}{\sqrt{\varepsilon}}\bigr)\Bigr]\Bigr)$.  Thus, to determine the essential spectral radius, it suffices to consider linear evolution on high frequencies.

To approximate the linear evolution operator acting on high frequency vector fields, $G(t)\circ \opeps\Bigl[1-\chi\big(\frac{\xi}{\sqrt{\varepsilon}}\bigr)\Bigr]$, we first introduce the parallel transport operator.  Let $g^t:\T^n\to \T^n$ denote the flow map defined by trajectories of the following ODE:
\begin{equation*}
\frac{d}{dt}g^tx = u(g^tx), \hspace{.5 cm} g^0 = \text{Id}.
\end{equation*}
Define $\mathfrak{g}_u(t)$ to be the evolution operator for the equation
\begin{equation}\label{paralleltrans}\left\{\begin{array}{l}
\dot{Y} = -u\cdot \nabla Y,\\
Y(x,0) = Y_0(x)\in L^2(\T^n).\end{array}\right.
\end{equation}
Solutions to (\ref{paralleltrans}) are parallel transport of the initial data $Y_0$ along the flow trajectories:  $\mathfrak{g}_u(t)Y_0(x) = Y_0(g^{-t}x)$.

We must also introduce the matrix-valued function $a_0$ to define the symbol of a pseudodifferential operator that, when composed with parallel transport along the flow, approximates $G(t)$ on high frequencies.  Let $a_0(x, \xi, t)\in M_{n\times n}$, for $(x,\xi,t) \in \T^n \times \R^n \backslash \{0\} \times \R$ and $n = 2,3$, be a solution to
\begin{equation}\renewcommand{\arraystretch}{1.50}\label{a0}\left\{\begin{array}{l}
\dot{a_0} = -\nabla_u a_0 -\pupx a_0 + 2 \frac{\xi \otimes \xi}{|\xi|^2}\bigl( \pupx a_0 \bigr), \\
a_0(x, \xi, 0) = \bigl( 1 - \frac{\xi \otimes \xi}{|\xi|^2}\bigr)\cdot\bigl( 1 - \chi \bigl( \frac{\xi}{\sqrt{\varepsilon}} \bigr) \bigr), \end{array}\right.
\end{equation}
where $\nabla_u$ is the Lie derivative computed in the cotangent bundle $T^*(\T^n)$ along flow trajectories:
\begin{equation*}
\nabla_u := \frac{d}{dt}|_{t=0}(g^t, (g^{-t}_*)^*).
\end{equation*}
In coordinates $\nabla_u = (u, -\pupx^T\xi)$.  Let $G_\varepsilon ^s (t):L^2_{sol} \to L^2_{sol}$ be defined by 
\begin{equation*}
G_\varepsilon ^s (t)w_0 =  \opeps ^s [a_0] \circ \mathfrak{g}_u(t)w_0,
\end{equation*}
where in $\R^3$
\begin{equation*}
(\opeps^s[a_0]w)(x) = \nabla\times\frac{\varepsilon}{(2\pi\varepsilon)^3}\int \frac{i\xi}{|\xi|^2}\times a_0(x,\xi,t) e^{i\xi\frac{x-y}{\varepsilon}}w(y)d^3yd^3\xi.
\end{equation*}

\noindent In \cite{vishik1}, Vishik proves that $G_\varepsilon ^s (t)$ approximates $G(t)$ on high frequencies in the following sense:
\begin{theorem}
\label{VishikThm}
Let $G(t)$ be the evolution operator associated with Euler's equation linearized at $u$.  Then for all $t \geq 0, G_\varepsilon^s(t)$ is a bounded operator in $L^2_{sol}$ and for any fixed $T>0$
\begin{equation}
\| G(t)\circ \opeps\Bigl[1-\chi\big(\frac{\xi}{\sqrt{\varepsilon}}\bigr)\Bigr] -  G_\varepsilon^s(t)\|_{\mathcal{L}(L^2_{sol}, L^2)} = O(\sqrt{\varepsilon}), \hspace{.2 in}0\leq t \leq T,
\end{equation}
with the constant in $O$ uniform over the interval $[0,T]$.
\end{theorem}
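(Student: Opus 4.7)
The plan is to realize $w_\varepsilon(t) := G_\varepsilon^s(t) w_0$ as an approximate divergence-free solution of linearized Euler, compute the size of its residual, and close by a Duhamel energy estimate. Let $L(w) := \partial_t w + u\cdot\nabla w + w\cdot\nabla u + \nabla q$, with $\nabla q$ chosen by the Leray projector so that $L(w)$ is solenoidal. The curl in the definition of $\opeps^s$ forces $\nabla\cdot w_\varepsilon \equiv 0$, and a direct Fourier computation on plane waves shows that $w_\varepsilon(0) = \opeps[1-\chi(\xi/\sqrt{\varepsilon})] w_0$ exactly when $w_0$ is solenoidal, so there is no initial discrepancy to track. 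The whole argument reduces to proving the residual bound
\begin{equation*}
\|R_\varepsilon(t)\|_{L^2} := \|L(w_\varepsilon)(t)\|_{L^2} \leq C\sqrt{\varepsilon}\,\|w_0\|_{L^2_{sol}} \qquad (0 \leq t \leq T);
\end{equation*}
given this, Duhamel's formula together with the standard bound $\|G(t)\|_{\mathcal{L}(L^2_{sol})} \leq Ce^{Ct}$ (an energy estimate for (LE) using $\|\nabla u\|_{L^\infty}<\infty$) immediately yields the theorem.

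For the residual, differentiate $w_\varepsilon = \opeps^s[a_0]\,\mathfrak{g}_u(t) w_0$ in $t$, use $\partial_t \mathfrak{g}_u = -u\cdot\nabla\,\mathfrak{g}_u$, and substitute (\ref{a0}) for $\dot a_0$. Then add $u\cdot\nabla w_\varepsilon$, $w_\varepsilon\cdot\nabla u$, and the Leray correction $\nabla q_\varepsilon$, expanding each via the $\varepsilon$-pseudodifferential symbol calculus. At principal order, the commutator $[u\cdot\nabla,\,\opeps^s[a_0]]$ produces $\opeps^s[\nabla_u a_0]$, because $\nabla_u = (u,-\bigl(\pupx\bigr)^T\xi)$ is exactly the Poisson bracket $\{u\cdot\xi,\cdot\}$ in coordinates; multiplication by $\pupx$ (from $w_\varepsilon\cdot\nabla u$) gives $\opeps^s[\pupx a_0]\,\mathfrak{g}_u(t)w_0$ at leading order; and the Leray projection acts on high-frequency symbols at leading order as multiplication by $1-\xi\otimes\xi/|\xi|^2$. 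Assembling these three contributions, the principal-symbol cancellation condition is precisely the PDE (\ref{a0}) for $a_0$, including the factor $2$ in $2(\xi\otimes\xi/|\xi|^2)\pupx a_0$ (which arises because both the Leray projection and the curl--Biot--Savart structure of $\opeps^s$ contribute a $\xi\otimes\xi/|\xi|^2$ correction to $\pupx a_0$ of the same sign). Consequently the principal symbol of $R_\varepsilon$ vanishes identically.

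The remaining terms in $R_\varepsilon$ are subprincipal. Each carries at least one extra factor of $\varepsilon$ from the symbol-calculus expansion, possibly paired with a $\xi$-derivative of $a_0$ or of the Biot--Savart kernel $\xi/|\xi|^2$. The cutoff $1-\chi(\xi/\sqrt{\varepsilon})$ restricts the $\xi$-support to $|\xi|\geq \sqrt{\varepsilon}/2$, so each such $\xi$-derivative costs at most $|\xi|^{-1} \leq 2\varepsilon^{-1/2}$; paired with the $\varepsilon$ prefactor this yields $O(\sqrt{\varepsilon})$ per term. A Calder\'on--Vaillancourt-type $L^2$ bound for the resulting semiclassical operators, together with the smoothness of $u$, $\pupx$, and the flow $g^t$ on the compact interval $[0,T]$, then delivers the claimed estimate $\|R_\varepsilon(t)\|_{L^2} \leq C\sqrt{\varepsilon}\,\|w_0\|_{L^2_{sol}}$ uniformly in $t$.

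The main obstacle I expect is the symbol-calculus bookkeeping: one must track every subprincipal term arising in the composition of $\opeps^s$ with $u\cdot\nabla$, with $\pupx$, and with the nonlocal Leray projector, and verify that each falls into the anisotropic regime $\varepsilon|\xi|^{-1}$ on the support of $1-\chi(\xi/\sqrt{\varepsilon})$. In particular, checking that the Leray projection's subprincipal corrections to $\pupx a_0$ behave as $\varepsilon$ times $|\xi|^{-1}$-bounded symbols, rather than as generic $O(\varepsilon)$ operators with potentially worse behavior near $|\xi|=0$, is the delicate point that forces the $\sqrt{\varepsilon}$-cutoff built into the initial data of (\ref{a0}) and yields exactly the stated $O(\sqrt{\varepsilon})$ rate.
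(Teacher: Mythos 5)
There is nothing in the paper to compare against here: Theorem \ref{VishikThm} is imported verbatim from \cite{vishik1} and the paper gives no proof of it. Your outline is nonetheless the standard route and is, in structure, the one Vishik follows: realize $G_\varepsilon^s(t)w_0$ as an approximate divergence-free solution of (LE), observe that the transport equation (\ref{a0}) is exactly the condition that the principal symbol of the residual vanishes, control the subprincipal terms by trading the factor $\varepsilon$ from the composition calculus against $|\xi|^{-1}\leq 2\varepsilon^{-1/2}$ on the support of $1-\chi(\xi/\sqrt{\varepsilon})$ (the same mechanism the paper exploits in Lemma \ref{sigmaS-1}), and close with Duhamel and the exponential bound on $\|G(t)\|_{\mathcal{L}(L^2_{sol})}$. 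Your identification of the $\sqrt{\varepsilon}$-scale cutoff as the source of the $O(\sqrt{\varepsilon})$ rate is exactly right, and the observation that $G_\varepsilon^s(0)=\opeps[1-\chi(\xi/\sqrt{\varepsilon})]$ on $L^2_{sol}$ checks out since $a_0(x,\xi,0)$ is $x$-independent and the double cross product acts as the identity on $\xi^\perp$. Be aware, though, that what you have is a correct plan rather than a proof: essentially all of the content is in the symbol-calculus bookkeeping you defer (the composition of the nonlocal Leray projector with $\opeps^s[a_0]$, the verification that $a_0(\cdot,\cdot,t)$ remains in a uniform symbol class on $[0,T]$ — which is what makes the constant in $O$ uniform in $t$ — and the asserted $L^2_{sol}$-boundedness of $G_\varepsilon^s(t)$, which you never address). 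Your heuristic for the factor $2$ is also slightly off: one copy of $\xi\otimes\xi/|\xi|^2$ comes from the Leray projection and the other from preserving the constraint $(a_0 b_0,\xi)=0$ while $\xi$ evolves by $\dot\xi=-\bigl(\pupx\bigr)^T\xi$ (see the computation leading to (\ref{bperpxi})), not from the curl--Biot--Savart structure per se; since you take (\ref{a0}) as given this does not damage the argument, but it would if you tried to rederive the symbol equation from your accounting.
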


To see the connection between solutions $a_0$ to (\ref{a0}) and solutions to (BAS), we introduce a decomposition of our symbol $a_0$:
\begin{equation}
a_0(x,\xi,t) = A_0(x,\xi,t)\bigl(1 -X \bigl(x,\xiover,t \bigr) \bigr),
\end{equation}
where $A_0$ is a solution to the following system:
\begin{equation}
\label{A0}
\renewcommand{\arraystretch}{1.75}
\left\{\begin{array}{l}\dot{A_0} = -\nabla_u A_0 -\pupx A_0 + 2\frac{\xi \otimes \xi}{|\xi|^2} \pupx A_0,\\
A_0(x,\xi,0) = 1 - \frac{\xi \otimes \xi}{|\xi|^2}.
\end{array}\right.
\end{equation}
And $X$ satisfies
\begin{equation}
 \label{X}
\renewcommand{\arraystretch}{1.75}\left\{\begin{array}{l} \dot{X} = -\nabla_u X,\\ X(x,\xi,0)= \chi (\xi).\end{array}\right.
\end{equation}
The matrix symbol $a_0(x,\xi,t)$ forms a strongly continuous cocycle over the flow $(g^{t}\cdot,(g^{-t}_*(x))^*\cdot)$ on the cotangent bundle $T^*(\T^n)$.  Similarly, $A_0(x,\xi,t)$ forms a strongly continuous cocyle on $\T^n \times \R P^{n-1}$.  An important consequence of this fact is that the Lyapunove-type exponent in Theorem \ref{vishik} is well defined.  Solutions to (BAS) are solutions to (\ref{A0}) for $A_0(\cdot,\cdot,t)$ along characteristics which are the flow lines $(g^{t}\cdot,(g^{-t}_*(x))^*\cdot)$ in $\T^n \times \R P^{n-1}$.  Thus it follows that for any initial conditions for (BAS) $(x_0,\xi_0,b_0) \in T^*(\T^n) \times \R^n$, the corresponding solution $b(x_0,\xi_0,b_0;t)$ satisfies
\begin{equation}
\label{Atob}
b(x_0,\xi_0,b_0;t) = A_0(g^{t}x_0, (g^{-t}_*(x))^*\xi_0, t)b_0.
\end{equation}

We finish the discussion of (BAS) by stating a simple property of solutions that will be necessary for proving the main theorem of this paper.  Let $\xi(t)$ satisfy the $\xi$-equation in (BAS) at a time $t>0$ and let $b(t)$ be a solution with the same initial conditions $(x_0, \xi_0)$.  Then we may compute
\begin{equation*}
 \frac{d}{dt}\bigl(b(t), \xi(t)\bigr) =\ \bigl(\dot{b}(t), \xi(t)\bigr) + \bigl(b(t),\dot{\xi}(t)\bigr)\\
=\ \bigl(\pupx b(t), \xi(t)\bigr) + \bigl(b(t), -\bigl(\pupx\bigr)^T\xi(t)\bigr) = 0.
\end{equation*}
Thus, whenever $b_0\perp \xi_0$ we have for any $t>0$,
\begin{equation}
\label{bperpxi}
 (b(t), \xi(t)) = (b(x_0,\xi_0,b_0;t),(g^{-t}_*(x_0))^*\xi_0 ) = 0.
\end{equation}

%%%%%%%%%%%%%%%%%%%%%%%%%%%%%%%%%%%%%%%%%%%%%%%%%%%%%%%%%%%%%%%%%%
Next we provide a definition of our $\varepsilon$-pseudodifferential operators and prove a technical lemma that will be necessary for the main results of this paper.
\begin{definition}
For $\T^n = \R^n/2\pi\Z^n$ the class of symbols $S^m_{\rho,\delta}(\T^n)$ denotes the space of functions $\sigma \in C^{\infty}(\T^n\times\R^n)$ such that for all $\alpha$, $\beta \in \Z^n$ there is a constant $C_{\alpha,\beta}$ such that for any $(x,\xi)\in \T^n\times \R^n $
\begin{equation*}
|\partial^{\alpha}_{x}\partial^{\beta}_{\xi}\sigma(x,\xi)| \leq C_{\alpha,\beta}(1+|\xi|)^{m-\rho|\beta| + \delta|\alpha|}.
\end{equation*}
\end{definition}

It follows directly from the definition above that if $\sigma \in C^{\infty}(\T^n\times\R^n)$ is positively homogeneous of degree $m$ in the region $|\xi| \geq R$ for some $R>0$ (that is, $\sigma(x, \lambda \xi) = \lambda^m \sigma(x,\xi), \lambda \geq 1, |\xi|\geq R$), then $\sigma \in S^m_{1,0}(\T^n)$.

For any $\varepsilon > 0$ and $\sigma\in S^0_{\rho,\delta}(\T^n)$ where $0\leq \delta < \rho \leq 1$ define $\text{op}_{\varepsilon}[\sigma(x,\xi)]:\mathcal{D}(\T^n)\to \mathcal{D}(\T^n)$ by equation (\ref{opepsdef}).  If $\sigma \in S^m_{1,0}(\T^n)$ for $m \leq 0$, the psuedodifferential operator $\opeps[\sigma(x,\xi)]$ is a bounded linear operator on $L^2_{sol}(\T^n)$.  A proof for periodic operators is given in \cite{s&v} for example.

We will need the following variant of the Calderon and Vaillancourt theorem from \cite{cv} for $x$-periodic amplitudes to estimate the norms of some $\varepsilon$-pseudodifferential operators (see also \cite{bdem}).

%%%%%%%%%%%%%%%%%%%%%%%%%%%%%%%%%%%%%%%%%%%%%%%%%%%%%%%%%%%%%
\begin{theorem}[Calderon-Vaillancourt]
Let $\sigma(x,\xi) \in C^{\infty}(\T^n \times \R^n)$ for $0\leq \rho < 1$, satisfy the following inequalities.
\begin{equation*}
\bigl|\partial_{x}^{\alpha}\partial_{\xi}^{\beta} \sigma (x,\xi)\bigr| \leq C_{\alpha \beta}(1 + |\xi|)^{\rho(|\alpha|-|\beta|)},
\end{equation*}
for all $(x,\xi)\in \T^n \times \R^n$, and $(\alpha,\beta) \in \Z^n$.  Then the pseudodifferential operator $\text{op}_1[\sigma(x,\xi)]$ extends from Schwartz space $\mathcal{D}(\T^n) = C^{\infty}(\T^n)$ to $L^2(\T^n)$ and defines a bounded operator there, moreover:
\begin{equation*}
\|\text{op}_1[\sigma]\|_{\mathcal{L}(L^2)} \leq C(n) \sum_{\substack{|\alpha| \leq 2((n/2)+1)\\
|\beta| \leq 2((n/(1-\rho))+1)}}C_{\alpha \beta}.
\end{equation*}
\end{theorem}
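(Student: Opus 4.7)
The plan is to prove this estimate via the Cotlar--Stein almost-orthogonality lemma, adapting the classical Euclidean proof of Calder\'on--Vaillancourt to the $x$-periodic setting. Since the hypothesis only controls derivatives by $(1+|\xi|)^{\rho(|\alpha|-|\beta|)}$ with $\rho<1$, Schur's test applied directly to $\text{op}_1[\sigma]$ does not close; the operator must first be split into almost-orthogonal blocks whose individual norms admit elementary estimates, and only then reassembled via Cotlar--Stein.

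First I would introduce two partitions of unity. A dyadic family $\{\varphi_j\}_{j\geq 0}$ in $\xi$, with $\varphi_j$ supported in shells $\{|\xi|\sim 2^j\}$, localizes frequency; for each $j$, a second family $\{\chi_{j,k}\}$ on $\T^n$ adapted to cubes of side $\sim 2^{-\rho j}$ (or the trivial partition, once $2^{-\rho j}$ exceeds the diameter of $\T^n$) localizes position. Setting $T_{j,k} := \text{op}_1[\chi_{j,k}\varphi_j\sigma]$ decomposes $\text{op}_1[\sigma] = \sum_{j,k} T_{j,k}$ into building blocks whose effective $x$- and $\xi$-scales are Heisenberg-reciprocal. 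A direct kernel estimate for each $T_{j,k}$, via integration by parts in $\xi$ against the phase $e^{i\xi\cdot(x-y)}$ followed by Schur's lemma, yields $\|T_{j,k}\|_{\mathcal{L}(L^2)}\leq C$ uniformly in $(j,k)$; tracking the number of integrations by parts needed to produce an integrable kernel is what fixes the $\beta$-derivative count $|\beta|\leq 2((n/(1-\rho))+1)$.

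The main obstacle is establishing almost-orthogonality of the blocks. The compositions $T_{j,k}T_{j',k'}^*$ and $T_{j,k}^*T_{j',k'}$ have oscillatory kernels in which the phase involves both the frequency gap $\xi-\xi'$ and the spatial separation between the supports of $\chi_{j,k}$ and $\chi_{j',k'}$. Repeated integration by parts in $x$ exploits the frequency gap when $|j-j'|$ is large, while integration by parts in $\xi$ exploits the spatial separation when $|k-k'|$ is large; a careful accounting of the cancellations produces constants $c(j-j',k-k')$ whose summability forces the $\alpha$-derivative count $|\alpha|\leq 2((n/2)+1)$ (the exponent $n/2$ arising from the Euclidean volume of the cube lattice in $k$). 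The Cotlar--Stein lemma then delivers
\begin{equation*}
\|\text{op}_1[\sigma]\|_{\mathcal{L}(L^2)} \leq C(n)\sum_{\substack{|\alpha|\leq 2((n/2)+1)\\ |\beta|\leq 2((n/(1-\rho))+1)}} C_{\alpha\beta},
\end{equation*}
as claimed. The $x$-periodicity enters only through the truncation of the coarsest cubes in the $\{\chi_{j,k}\}$ partition at the scale of $\T^n$; the remaining oscillatory-integral mechanics are identical to the Euclidean argument of \cite{cv}, so this adaptation requires no substantially new ideas beyond bookkeeping.
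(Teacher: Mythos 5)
The paper does not prove this theorem at all: it is quoted as a known variant of the Calder\'on--Vaillancourt theorem, with the proof deferred to the cited original paper and to the literature on periodic amplitudes. So there is no in-paper argument to compare against; your proposal has to stand on its own as a reconstruction of the classical proof.

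As a strategy, what you outline is indeed the standard route (phase-space decomposition into blocks adapted to the $S^0_{\rho,\rho}$ scales, uniform block bounds by integration by parts and Schur's test, almost-orthogonality, Cotlar--Stein), and nothing in the outline is wrong in principle. The genuine gap is that the entire quantitative content of this particular statement --- the explicit bound by $C(n)\sum C_{\alpha\beta}$ over the specific ranges $|\alpha|\leq 2((n/2)+1)$ and $|\beta|\leq 2((n/(1-\rho))+1)$ --- is asserted rather than derived. You say the $\beta$-count is ``what fixes'' the number of $\xi$-integrations by parts and that the $\alpha$-count ``forces'' summability of $c(j-j',k-k')$, but you never exhibit the kernel estimates for $T_{j,k}T_{j',k'}^*$ and $T_{j,k}^*T_{j',k'}$, never show that the resulting gains are summable over the lattice of blocks, and never track which derivatives of $\sigma$ actually appear in those estimates. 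Without that accounting one cannot conclude that exactly these derivative orders suffice (different decompositions yield different counts, and your decomposition --- dyadic shells in $\xi$ with no further $\xi$-localization, cubes of side $2^{-\rho j}$ in $x$ --- is not obviously the one that produces these particular exponents). Since the theorem is invoked in the paper precisely through Lemma \ref{sigmaS-1}, where the explicit dependence on finitely many $C_{\alpha\beta}$ is what makes the $O(\sqrt{\varepsilon}^m)$ estimate uniform, the missing bookkeeping is not a cosmetic omission; it is the part of the proof that the application actually uses.
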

%%%%%%%%%%%%%%%%%%%%%%%%%%%%%%%%%%%%%%%%%%%%%%%%%%%%%%%%%%%%
%%%%%%%%%%%%%%%%%%%%%%%%%%%%%%%%%%%%%%%%%%%%%%%%%%%%%%%%%%%%
\begin{lemma}
\label{sigmaS-1}
Let $\sigma_{\varepsilon}(x,\xi) \in S^{-m}_{1,0}(\T^n)$ for $m > 0$.  Suppose that for some positive constant $c_0$, $\sigma_{\varepsilon}(x,\xi) = 0$ whenever $|\xi| < \frac{c_0}{\sqrt{\varepsilon}}$.  Then $\|\text{op}_1[\sigma_{\varepsilon}]\|_{\mathcal{L}(L^2)} = O(\sqrt{\varepsilon}^m)$.
\end{lemma}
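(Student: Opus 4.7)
The plan is to exploit the support condition $|\xi|\geq c_0/\sqrt{\varepsilon}$ to convert the strong decay of $\sigma_\varepsilon \in S^{-m}_{1,0}$ into a uniform factor of $\varepsilon^{m/2}$, and then reduce the claim to a Calderon-Vaillancourt bound for a rescaled symbol. Specifically, I will set $\tilde{\sigma}_\varepsilon(x,\xi) := \varepsilon^{-m/2}\sigma_\varepsilon(x,\xi)$, so that by linearity of $\text{op}_1$ in its amplitude, the lemma is equivalent to showing $\|\text{op}_1[\tilde{\sigma}_\varepsilon]\|_{\mathcal{L}(L^2)} = O(1)$ as $\varepsilon \to 0^+$.

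Next I will verify that $\tilde{\sigma}_\varepsilon$ satisfies the hypotheses of the Calderon-Vaillancourt theorem with $\rho = 0$, uniformly in $\varepsilon$. Off $\supp \sigma_\varepsilon$ the symbol vanishes, so all derivatives vanish as well. On $\supp \sigma_\varepsilon$ we have $1+|\xi| \geq c_0/\sqrt{\varepsilon}$, hence
\begin{equation*}
(1+|\xi|)^{-m-|\beta|} \leq c_0^{-m-|\beta|}\,\varepsilon^{(m+|\beta|)/2}.
\end{equation*}
Combining this with the $S^{-m}_{1,0}$ seminorm bounds and the rescaling, for every multi-index pair $(\alpha,\beta)$ and all $\varepsilon \in (0,1]$,
\begin{equation*}
|\partial_x^\alpha \partial_\xi^\beta \tilde{\sigma}_\varepsilon(x,\xi)| \leq \varepsilon^{-m/2} C_{\alpha,\beta}(1+|\xi|)^{-m-|\beta|} \leq C_{\alpha,\beta}\, c_0^{-m-|\beta|}\,\varepsilon^{|\beta|/2} \leq C'_{\alpha,\beta},
\end{equation*}
with $C'_{\alpha,\beta}$ independent of $\varepsilon$. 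These are exactly the Calderon-Vaillancourt estimates at $\rho = 0$.

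Finally, the theorem yields $\|\text{op}_1[\tilde{\sigma}_\varepsilon]\|_{\mathcal{L}(L^2)} \leq C(n) \sum_{|\alpha|\leq 2((n/2)+1),\ |\beta|\leq 2(n+1)} C'_{\alpha,\beta}$, a finite sum of $\varepsilon$-independent constants. Undoing the rescaling gives $\|\text{op}_1[\sigma_\varepsilon]\|_{\mathcal{L}(L^2)} = \varepsilon^{m/2}\,\|\text{op}_1[\tilde{\sigma}_\varepsilon]\|_{\mathcal{L}(L^2)} = O(\sqrt{\varepsilon}^m)$, as claimed.

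I do not anticipate a substantive obstacle here; the only point requiring care is the bookkeeping that keeps the Calderon-Vaillancourt constants uniform in $\varepsilon$. This works cleanly because differentiation in $\xi$ produces extra factors of $\varepsilon^{|\beta|/2} \leq 1$, so the essential gain is already captured in the undifferentiated estimate, and the hypothesis $m>0$ is used only to guarantee that the prefactor $\varepsilon^{m/2}$ tends to zero.
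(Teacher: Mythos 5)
Your proof is correct and follows essentially the same route as the paper: both exploit the support condition $|\xi|\geq c_0/\sqrt{\varepsilon}$ to trade the $S^{-m}_{1,0}$ decay for a factor of $\sqrt{\varepsilon}^{\,m}$ and then invoke the Calderon--Vaillancourt theorem. The only cosmetic difference is that you rescale the symbol and apply the theorem with $\rho=0$, whereas the paper keeps the $\varepsilon$-dependence inside the constants $C_{\beta,\gamma}$ and applies it with $\rho=1/2$; both are valid instances of the stated theorem and yield the same bound.
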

%%%%%%%%%%%%%%%%%%%%%%%%%%%%%%%%%%%%%%%%%%%%%%%%%%%%%%%%%%%%%

\begin{proof} We will use the Calderon-Vaillancourt inequality to estimate the $L^2$-operator norm of $\text{op}_1[\sigma_{\varepsilon}(x,\xi)]$.  Let $\beta$, $\gamma \in \Z^n$.  Since $\sigma_{\varepsilon}(x,\xi)\in S^{-m}_{1,0}$, there is some constant $C_{\beta,\gamma}$ such that for any $x \in \T^n$
\begin{equation*}
 |\partial_x^{\beta}\partial_{\xi}^{\gamma}\sigma_{\varepsilon}(x,\xi)| \leq C_{\beta,\gamma}(1+|\xi|)^{-m -|\gamma|}.
\end{equation*}
Multiply this inequality by $(1+|\xi|)^{1/2(|\gamma|-|\beta|)}$ to get
\begin{align*}
|\partial_x^{\beta}\partial_{\xi}^{\gamma}\sigma_{\varepsilon}(x,\xi)|
(1+|\xi|)^{1/2(|\gamma|-|\beta|)}
\leq& C_{\beta,\gamma}(1 + |\xi|)^{-(m+1/2(|\beta| + |\gamma|))}\\
\leq& C_{\beta,\gamma}(\frac{\sqrt{\varepsilon}}{c_0})^{m+1/2(|\beta| + |\gamma|)}.
\end{align*}
This last inequality follows from the fact that the symbol $\sigma_{\varepsilon}(x,\xi)=0$ for $|\xi|<\frac{c_0}{\sqrt{\varepsilon}}$.  So for any $(x,\xi) \in \T^n \times \R^n$ we have
\begin{equation*}
 |\partial_x^{\beta}\partial_{\xi}^{\gamma}\sigma_{\varepsilon}(x,\xi)| \leq C_{\beta,\gamma}(\frac{\sqrt{\varepsilon}}{c_0})^{m+1/2(|\beta| + |\gamma|)}(1 + |\xi|)^{1/2(|\beta|-|\gamma|)}.
\end{equation*}
Thus, we may use the Calderon-Vaillancourt inequality for $\rho = 1/2$ to estimate the norm of our operator.  The most substantial contribution to the norm is the $\beta=\gamma=0$ summand.  Therefore we have
\begin{equation*}
\|\text{op}_1[\sigma_{\varepsilon}(x,\xi)]\|_{\mathcal{L}(L^2_{sol})}= O(\sqrt{\varepsilon}^m).
\end{equation*}
\end{proof}

In the proof of the main theorem we deal with a rougher estimate of the linear evolution operator in terms of an $\varepsilon$-pseudodifferential operator.  We define $G_{\varepsilon}(t)\psi_{\delta}(x) := (\opeps[a_0]\circ\mathfrak{g}_u(t) \psi_{\delta})(x)$.  The advantage of looking at vector fields such as $\psi_{\delta}$ defined in Section \ref{3Dclassifysection}, is that we can estimate $G_{\varepsilon}(t)\psi_{\delta}$ explicitly, which we will see in this next lemma.   We omit the proof, which can be found in \cite{vishik1}.  
%%%%%%%%%%%%%%%%%%%%%%%%%%%%%%%%%%%%%%%%%%%%%%%%
\begin{lemma}
\label{imagepsilemma}
 Let $\psi_{\delta}$ be defined as in line (\ref{definepsi}) above.  Then for any fixed $t>0$, we have the following approximation for $G_{\varepsilon}(t)\psi_{\delta}(x) := (\opeps[a_0]\circ\mathfrak{g}_u(t) \psi_{\delta})(x)$:
\begin{equation*}
 (\opeps[a_0]\circ\mathfrak{g}_u(t) \psi_{\delta})(x) = h_0(g^{-t}x)A_0(x, (g^{-t}_*(x))^*\xi_0,t)Pe^{ig^{-t}x\cdot\xi_0/\delta} + r_{\delta}(x),
\end{equation*}
where $A_0$ is the homogeneous part of $a_0$ defined by (\ref{A0}) and $\|r_{\delta}\|_{L^2} = O(\delta)$.
\end{lemma}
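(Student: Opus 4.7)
The plan is to apply the two operators in sequence to $\psi_\delta$, exploiting its WKB form and using stationary phase for the pseudodifferential step. Throughout I would take $\varepsilon = \delta$ so that the semiclassical scale of $\opeps$ matches the oscillation scale of $\psi_\delta$. First I would use the expansion (\ref{psiexpanded}) to split
\begin{equation*}
\psi_\delta(x) = h_0(x)Pe^{ix\cdot\xi_0/\delta} + \delta\rho(x),
\end{equation*}
where $\rho \in L^2(\T^3)$ has norm bounded independently of $\delta$. Since $\mathfrak{g}_u(t)$ is an isometry on $L^2$ (the flow $g^t$ is volume preserving because $u$ is divergence free) and $\opeps[a_0]$ is bounded on $L^2$ (its symbol is of order zero), the $\delta\rho$ term produces only an $O(\delta)$ contribution to the output, and it suffices to track the leading WKB piece.

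Parallel transport sends this leading piece to $h_0(g^{-t}x)Pe^{ig^{-t}x\cdot\xi_0/\delta}$, again of WKB form with slow amplitude $A(x):=h_0(g^{-t}x)P$ and fast phase $S(x)/\delta$ where $S(x):=g^{-t}x\cdot\xi_0$. Its phase gradient is
\begin{equation*}
\nabla S(x) = (g^{-t}_*(x))^*\xi_0,
\end{equation*}
precisely the covector appearing in the target formula. Next I would apply $\opeps[a_0]$ and use the standard semiclassical expansion for a pseudodifferential operator acting on a WKB state: substituting $w(y)=A(y)e^{iS(y)/\delta}$ into (\ref{opepsdef}) and rescaling $\xi=\delta\eta$ gives a phase $\eta\cdot(x-y)+S(y)/\delta$ whose unique stationary point is $y=x$, $\delta\eta=\nabla S(x)$. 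Taylor expanding $A$ and the symbol around this point and controlling the remainder by integration by parts would yield
\begin{equation*}
\opeps[a_0]\bigl(A e^{iS/\delta}\bigr)(x) = a_0\bigl(x,\nabla S(x),t\bigr)A(x)e^{iS(x)/\delta} + O(\delta).
\end{equation*}
Because $|\xi_0|\geq 1$ and $g^{-t}_*(x)$ is uniformly invertible on compact time intervals, $|\nabla S(x)|$ stays bounded below, so for small $\varepsilon$ the cutoff $1-\chi(\cdot/\sqrt{\varepsilon})$ hidden in $a_0$ equals $1$ at this frequency, reducing $a_0$ to $A_0$ and producing the claimed expression.

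The main obstacle is the stationary phase estimate with uniform $O(\delta)$ remainder. One must simultaneously control the $\xi$-derivatives of $a_0$ (with the $\sqrt{\varepsilon}$-scaled cutoff treated as in Lemma \ref{sigmaS-1}), the $x$- and $y$-derivatives of $A$ and $S$, and verify that the Hessian of the combined phase is nondegenerate so that nonstationary contributions are absorbed into the remainder. These bounds are uniform in $\delta$ on $[0,T]$ by smoothness of $u$ and compactness of $\T^n$, but the bookkeeping is technical, which is why the detailed calculation is deferred to \cite{vishik1}.
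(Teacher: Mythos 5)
The paper does not actually prove this lemma --- it defers to \cite{vishik1} --- so there is no in-text proof to compare against; your outline (peel off the $O(\delta)$ corrector from (\ref{psiexpanded}), push the leading WKB piece through $\mathfrak{g}_u(t)$, identify the transported phase gradient with $(g^{-t}_*(x))^*\xi_0$, then apply a stationary-phase/symbol expansion) is the standard argument and is structurally the right one. The splitting step and the isometry argument for the corrector are fine, and the identification $\nabla(g^{-t}x\cdot\xi_0)=(g^{-t}_*(x))^*\xi_0$ is exactly what produces the covector in the statement.

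The genuine problem is your opening normalization $\varepsilon=\delta$. In this paper $\varepsilon$ and $\delta$ are independent parameters and must remain so: in the proof of Proposition \ref{lowerbnd} the lemma is invoked with $\varepsilon$ held fixed while $\delta\to 0$, and only afterwards is $\varepsilon\to 0$ used (through Theorem \ref{VishikThm} and the $O(\sqrt{\varepsilon})$ terms). A version of the lemma proved only on the diagonal $\varepsilon=\delta$ cannot be substituted into that argument. The coupling is also unnecessary. Without it, the $y$-integration against $e^{i\xi\cdot(x-y)/\varepsilon}$ localizes the symbol at the frequency $\xi=\varepsilon\nabla S(x)/\delta$ rather than at $\nabla S(x)$; since $A_0$ is homogeneous of degree $0$ in $\xi$ and $|\varepsilon\nabla S(x)/\delta|\geq c\varepsilon/\delta \gg \sqrt{\varepsilon}$ once $\delta$ is small (for fixed $\varepsilon$), the cutoff factor $1-X$ equals $1$ there and $a_0(x,\varepsilon\nabla S(x)/\delta,t)$ reduces to $A_0(x,\nabla S(x),t)$ exactly as needed; the first-order correction involves $\partial_\xi a_0$ evaluated at a frequency of size $\varepsilon/\delta$, hence is $O(\delta/\varepsilon)=O(\delta)$ for fixed $\varepsilon$. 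You should rerun your stationary-phase bookkeeping in this uncoupled form (and note that the constant in $O(\delta)$ may then depend on $\varepsilon$ and $t$, which is all the application requires).
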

%%%%%%%%%%%%%%%%%%%%%%%%%%%%%%%%%%%%%%%%%%%%%%%%

\begin{remark}
\label{Atobremark}
From equation (\ref{Atob}) we have
\begin{equation*}
h_0(g^{-t}x)A_0(x, (g^{-t}_*(x))^*\xi_0,t)Pe^{ig^{-t}x\cdot\xi_0/\delta} = h_0(g^{-t}x)b(g^{-t}x,\xi_0,P;t)e^{ig^{-t}x\cdot\xi_0/\delta}.
\end{equation*}
\end{remark}

Now we present, without proof,  a slightly generalized 2-dimensional version of Lemma \ref{imagepsilemma} to approximate the linear evolution of our $\phi_{\zeta,\delta}$ vector fields from Section \ref{2Dclassifysection}, where it is no longer assumed that the frequency vector $\xi_0$ has integer components.  The parameter $\zeta$ is introduced to ensure that our estimate is periodic (see Remark \ref{zetarmk}).  The proof is completely similar to the proof of Lemma \ref{imagepsilemma} found in \cite{vishik1}.

%%%%%%%%%%%%%%%%%%%%%%%%%%%%%%%%%%%%%%%%%%%%%%%%%%%%%%%%%%%%
\begin{lemma}
\label{imagephilemma}
 Let $h_0\in C^{\infty}(\T^2)$ be supported on $B_1(0)$, the ball centered at $0$ of radius $1$.  For $0<\zeta<1$ and fixed $x_0\in \T^2$, define $h_{\zeta}$ by
\begin{equation*}
 h_{\zeta}(x) := h_0\bigl(\frac{x-x_0}{\zeta}\bigr).
\end{equation*}
Let $\xi_0 \in \R^2$, $\delta^{-1}\in \Z_+$ and define $\phi_{\zeta,\delta}(x):= -i\delta\nabla^{\perp}(h_{\zeta}e^{ix\cdot \xi_0/\delta})$.  Then for any fixed $t>0$ we can approximate $G_{\varepsilon}(t)\phi_{\zeta,\delta}(x) := (\opeps[a_0]\circ\mathfrak{g}_u(t) \phi_{\zeta,\delta})(x)$ as follows:
\begin{equation*}
 (\opeps[a_0]\circ\mathfrak{g}_u(t) \phi_{\zeta,\delta})(x) = h_{\zeta}(g^{-t}x)b(g^{-t}x,\xi_0,\xi_0^{\perp};t)e^{ig^{-t}x\cdot\xi_0/\delta} + r_{\delta}(x) ,
\end{equation*}
where $\|r_{\delta}\|_{L^2} = O(\delta)$.
\end{lemma}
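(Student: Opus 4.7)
The plan is to mimic the proof of Lemma \ref{imagepsilemma} (Vishik's three-dimensional argument) with the obvious two-dimensional substitutions. First, I would expand $\phi_{\zeta,\delta}$ exactly as in equation (\ref{phiexpanded}):
\begin{equation*}
\phi_{\zeta,\delta}(x) = h_\zeta(x)\xi_0^\perp e^{ix\cdot\xi_0/\delta} - i\delta\, e^{ix\cdot\xi_0/\delta}\nabla^\perp h_\zeta(x).
\end{equation*}
The second summand has $L^2$-norm bounded by $\delta\|\nabla h_\zeta\|_{L^2}$, which in two dimensions equals $\delta\|\nabla h_0\|_{L^2}$ (the scale invariance is special to $n=2$ and independent of $\zeta$). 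Since $\mathfrak{g}_u(t)$ is a pullback isometry and $\opeps[a_0]$ is bounded on $L^2$, this tail is absorbed into $r_\delta$.

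For the main summand, parallel transport produces $h_\zeta(g^{-t}x)\xi_0^\perp e^{ig^{-t}x\cdot\xi_0/\delta}$, an oscillatory function with phase $\Phi(x)/\delta$, where $\Phi(x) := g^{-t}x\cdot\xi_0$ has gradient $\nabla\Phi(x) = (g^{-t}_*(x))^*\xi_0$. The heart of the argument is the standard symbol-calculus identity: for $f$ smooth and supported in a small ball and $\delta \ll \sqrt{\varepsilon}$,
\begin{equation*}
\opeps[a_0]\bigl(f\, e^{i\Phi/\delta}\bigr)(x) = a_0\!\Bigl(x,\tfrac{\varepsilon}{\delta}\nabla\Phi(x), t\Bigr)\,f(x)\,e^{i\Phi(x)/\delta} + O(\delta).
\end{equation*}
This is obtained by substituting into the oscillatory-integral definition of $\opeps$ and Taylor-expanding the amplitude $a_0(x,\xi)$ around $\xi = \tfrac{\varepsilon}{\delta}\nabla\Phi(x)$, with the remainder symbols estimated via Lemma \ref{sigmaS-1} in the regime $|\xi| \gtrsim \sqrt{\varepsilon}/\delta$.

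Two simplifications finish the proof. First, $A_0$ is homogeneous of degree zero in $\xi$, and since $|\tfrac{\varepsilon}{\delta}(g^{-t}_*(x))^*\xi_0|/\sqrt{\varepsilon} = (\sqrt{\varepsilon}/\delta)\,|(g^{-t}_*(x))^*\xi_0|$ is large for small $\delta$, the cut-off factor $(1 - X(x,\xi/\sqrt{\varepsilon},t))$ equals $1$, so $a_0(x, \tfrac{\varepsilon}{\delta}\nabla\Phi, t) = A_0(x,(g^{-t}_*(x))^*\xi_0, t)$. Second, relation (\ref{Atob}) applied at $b_0 = \xi_0^\perp$ translates this into $b(g^{-t}x, \xi_0, \xi_0^\perp; t)$, which is exactly the stated main term.

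The principal obstacle is the symbol-calculus estimate in the middle paragraph: one must carefully track the error terms in the Taylor expansion of $a_0$ and check that the resulting remainder symbols belong to classes to which Lemma \ref{sigmaS-1} applies, producing an $O(\delta)$ (and not $O(\delta/\sqrt{\varepsilon})$) error in $L^2$-operator norm. The $\zeta$-localization plays no conceptual role in this analysis — its sole purpose is to secure periodicity when $\xi_0 \notin \Z^2$ (see Remark \ref{zetarmk}) — so all $\zeta$-dependent constants may be absorbed into the implicit constants for fixed $\zeta$ as $\delta \to 0$.
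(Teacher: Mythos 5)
Your proposal is correct and is essentially the argument the paper intends: the paper omits the proof precisely because it is "completely similar" to Vishik's proof of Lemma \ref{imagepsilemma}, namely the expansion (\ref{phiexpanded}) to isolate an $O(\delta)$ tail, followed by the oscillatory-testing asymptotics for $\opeps[a_0]$ acting on $h_\zeta(g^{-t}x)\xi_0^{\perp}e^{i\Phi/\delta}$ with $\nabla\Phi = (g^{-t}_*(x))^*\xi_0$, the degree-zero homogeneity of $A_0$, and the identification (\ref{Atob}). Your parenthetical worry about $O(\delta)$ versus $O(\delta/\sqrt{\varepsilon})$ is immaterial for the lemma's use, since in Propositions \ref{2DlowerbndIm} and \ref{2DlowerbndF} the limit $\delta\to 0$ is taken with $\varepsilon$ fixed, so an $\varepsilon$-dependent constant in the $O(\delta)$ is harmless.
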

%%%%%%%%%%%%%%%%%%%%%%%%%%%%%%%%%%%%%%%%%%%%%%%%%%%%%%%%%%%%%%%%%%%

\section{Main theorems for 3-dimensional flows}
\label{mainthms3D}
In this section we prove the main theorem for 3-dimensional flows, Theorem \ref{3Dmainthm}.
In the following theorem $b(x_0,\xi_0,b_0;t)$ is a solution to (BAS) corresponding to our equilibrium flow $u$, with initial conditions $(x_0,\xi_0,b_0)$.  Recall the set of admissible initial conditions is
\begin{equation*}
\mathcal{A}:= \{(x_0,\xi_0,b_0)\in \T^3 \times \R^3 \times \R^3 |\ \xi_0 \perp b_0 ,\ |\xi_0|=|b_0|=1 \}.
\end{equation*}
We also denote the vorticity vector field $\omega = \curl(u)$.
%%%%%%%%%%%%%%%%%%%%%%%%%%%%%%%%%%%%%%%%%%%%%%%%%%%%%%
\begin{theorem}
\label{3Dmainthm}
\begin{itemize}
 \item[(i)]   Let $\mu_{3*} \in\R$ be defined by
\begin{equation*}
\mu_{3*} = \lim_{t\to\infty}\frac{1}{t}\log\sup_{\substack {(x_0.\xi_0,b_0)\in \mathcal{A}\\x_0 \in \supp(\omega)}} |b(x_0, \xi_0, b_0; t)|,
\end{equation*}Then $e^{\mu_{3*}t} \leq r_{ess}(G(t)|_{\overline{\text{Im}B}})$.
\item[(ii)] If $\supp(\omega)$ is a proper subset of the fluid domain $\T^3$, let $\mu_{3F} \in\R$ be defined by
\begin{equation*}
\mu_{3F} = \lim_{t\to\infty}\frac{1}{t}\log\sup_{\substack {(x_0,\xi_0,b_0)\in \mathcal{A}\\x_0 \notin \supp(\omega)}} |b(x_0, \xi_0, b_0; t)|,
\end{equation*}
Then $e^{\mu_{3F}t} \leq r_{ess}(G_F(t))$, where $G_F(t)$ denotes $G(t)$ on the factor space.
\end{itemize}
\end{theorem}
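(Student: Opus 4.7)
The plan is to apply Nussbaum's formula to both operators. Because $B^{*}=-B$ on $L^{2}_{sol}$ (from $(\omega\times v)\cdot w=-v\cdot(\omega\times w)$), one has $\text{Ker}B=(\overline{\text{Im}B})^{\perp}$, and the isometric identification $L^{2}_{sol}/\overline{\text{Im}B}\cong\text{Ker}B$ via $[v]\mapsto\mathbb{P}_{\text{Ker}B}v$ carries $G_{F}(t)$ to $u\mapsto\mathbb{P}_{\text{Ker}B}G(t)u$ on $\text{Ker}B$. By Theorem \ref{Nussbaum}, each lower bound reduces to producing, for arbitrarily large $T$, a bounded sequence $v_{k}$ in the appropriate domain ($\overline{\text{Im}B}$ for (i), $\text{Ker}B$ for (ii)) that converges weakly to zero and whose images under the relevant operator have $L^{2}$-norm nearly $e^{\mu T}\|v_{k}\|$ (with $\mu$ close to $\mu_{3*}$ in (i), to $\mu_{3F}$ in (ii)); compact perturbations cost nothing, since compact operators send weakly-null sequences to norm-null ones.

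For part (i), fix $\mu<\mu_{3*}$ and choose $T$ large together with $(x_{0},\xi_{0},b_{0})\in\mathcal{A}$ such that $x_{0}\in\supp(\omega)$ and $|b(x_{0},\xi_{0},b_{0};T)|\ge e^{\mu T}$. Continuity of (BAS) in initial data, density of rational directions on $S^{2}$, the homogeneity of (BAS) in $\xi$, and the openness of $\xi\not\perp\omega(x_{0})$ let me assume, at the price of an arbitrarily small loss in growth, that $\xi_{0}\in\Z^{3}$, $\omega(x_{0})\ne 0$, and $(\omega(x_{0}),\xi_{0})\ne 0$. Set $P:=b_{0}$; Lemma \ref{inimage} produces $\psi_{\zeta,\delta}$ and $\overline{\psi}_{\zeta,\delta}$ with $\psi_{\zeta,\delta}-B(\overline{\psi}_{\zeta,\delta})=r_{\zeta}+r_{\delta}$, so the test vectors $v_{\zeta,\delta}:=B(\overline{\psi}_{\zeta,\delta})$ lie in $\overline{\text{Im}B}$. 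Combining Theorem \ref{VishikThm}, Lemma \ref{imagepsilemma}, and Remark \ref{Atobremark} yields
\begin{equation*}
G(T)\psi_{\zeta,\delta}=h_{\zeta}(g^{-T}x)\,b(g^{-T}x,\xi_{0},P;T)\,e^{ig^{-T}x\cdot\xi_{0}/\delta}+o_{\delta,\varepsilon}(1)
\end{equation*}
in $L^{2}$. Measure-preservation of $g^{T}$, Lipschitz continuity of $x\mapsto b(x,\xi_{0},P;T)$, the identity $\|h_{\zeta}\|_{L^{2}}=\zeta^{3/2}\|h_{0}\|_{L^{2}}$, and the expansion (\ref{psiexpanded}) then give $\|G(T)v_{\zeta,\delta}\|_{L^{2}}/\|v_{\zeta,\delta}\|_{L^{2}}\to|b(x_{0},\xi_{0},P;T)|+O(\zeta)$ as $\delta\to 0$. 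Since $v_{\zeta,\delta_{k}}$ oscillates at scale $1/\delta_{k}$, it tends weakly to zero for fixed $\zeta$, so $\|G(T)|_{\overline{\text{Im}B}}\|_{\mathcal{K}}\ge|b(x_{0},\xi_{0},P;T)|+O(\zeta)$. Letting $\zeta\to 0$, then taking $T=nt$ with $n\to\infty$ in Theorem \ref{Nussbaum}, I obtain $r_{ess}(G(t)|_{\overline{\text{Im}B}})\ge e^{\mu t}$ for every $\mu<\mu_{3*}$, which is (i).

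For part (ii), run the same skeleton with data satisfying $x_{0}\notin\supp(\omega)$. Choose $\zeta$ small enough that $\overline{B_{\zeta}(x_{0})}\cap\supp(\omega)=\varnothing$; then $\omega\times\psi_{\zeta,\delta}\equiv 0$ pointwise, so $\psi_{\zeta,\delta}\in\text{Ker}B$. Because the vorticity of a 3D steady Euler flow satisfies $\omega(g^{t}x)=(g^{t})_{*}\omega(x)$, the set $\{\omega=0\}$ is $g^{t}$-invariant, so $g^{T}(B_{\zeta}(x_{0}))\cap\supp(\omega)=\varnothing$ too; hence the main term of $G(T)\psi_{\zeta,\delta}$ from Lemma \ref{imagepsilemma} is supported where $\omega$ vanishes and thus lies in $\text{Ker}B$. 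Consequently $\|\mathbb{P}_{\text{Ker}B}G(T)\psi_{\zeta,\delta}\|_{L^{2}}=\|G(T)\psi_{\zeta,\delta}\|_{L^{2}}+o_{\delta,\varepsilon}(1)$, and the Calkin/Nussbaum argument carried out in $\text{Ker}B$ with the $\psi_{\zeta,\delta_{k}}$ as weakly-null test vectors delivers $r_{ess}(G_{F}(t))\ge e^{\mu_{3F}t}$ in the same manner.

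The principal technical obstacle is controlling the triple limit $\varepsilon,\delta,\zeta\to 0$ in the correct order (first $\varepsilon$, then $\delta$, then $\zeta$): the three error sources of sizes $O(\sqrt{\varepsilon})$, $O(\delta)$, and $O(\zeta^{5/2})$ from Theorem \ref{VishikThm}, Lemma \ref{imagepsilemma}, and Lemma \ref{inimage} must each be shown subordinate to the leading $\zeta^{3/2}$ scale of $\|v_{\zeta,\delta}\|_{L^{2}}$. The only nonroutine auxiliary step is the preliminary perturbation replacing a supremizing $(x_{0},\xi_{0})$ by one meeting Lemma \ref{inimage}'s hypothesis $(\omega(x_{0}),\xi_{0})\ne 0$ with $\xi_{0}$ of rational direction, without degrading the exponential growth by more than an $\epsilon$-factor.
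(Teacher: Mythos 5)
Your proposal takes essentially the same route as the paper: the same test fields $\psi_{\zeta,\delta}$ and preimages $B(\overline{\psi}_{\zeta,\delta})$, the same use of Lemmas \ref{inimage} and \ref{imagepsilemma} together with Theorem \ref{VishikThm}, the invariance of $\T^3\setminus\supp(\omega)$ under $g^t$ for part (ii), and Nussbaum's formula at the end. Your weakly-null-sequence treatment of compact perturbations is equivalent to the paper's device of composing a finite-rank operator $C$ with $\opeps\bigl[1-\chi\bigl(\frac{\xi}{\sqrt{\varepsilon}}\bigr)\bigr]$ and observing that the composition is $o(1)$ as $\varepsilon\to 0$, and your identification $F\cong\text{Ker}B$ via the skew-adjointness of $B$ is exactly the content of Remark \ref{F-normrmk}.

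The one concrete error is your stated order of limits, ``first $\varepsilon$, then $\delta$, then $\zeta$'': the limit $\delta\to 0$ must be innermost. Your display $G(T)\psi_{\zeta,\delta}=\text{(main term)}+o_{\delta,\varepsilon}(1)$ is obtained by passing through $G(T)\circ\opeps\bigl[1-\chi\bigl(\frac{\xi}{\sqrt{\varepsilon}}\bigr)\bigr]$, and this cutoff reproduces $\psi_{\zeta,\delta}$ only when the oscillation frequency $|\xi_0|/\delta$ lies above the cutoff scale $\sim\varepsilon^{-1/2}$, i.e.\ when $\delta\ll\sqrt{\varepsilon}$; if instead $\varepsilon\to 0$ with $\delta$ fixed, then $\opeps\bigl[1-\chi\bigl(\frac{\xi}{\sqrt{\varepsilon}}\bigr)\bigr]\psi_{\zeta,\delta}\to 0$ and the approximation of $G(T)\circ\opeps\bigl[1-\chi\bigl(\frac{\xi}{\sqrt{\varepsilon}}\bigr)\bigr]$ by $G^s_{\varepsilon}(T)$ tells you nothing about $G(T)\psi_{\zeta,\delta}$. (The $O(\delta)$ remainder in Lemma \ref{imagepsilemma} is likewise a statement for fixed $\varepsilon$.) The paper sidesteps this by proving, for each fixed $\varepsilon$, the operator-norm bound $\|G^s_{\varepsilon}(t)\|_{\mathcal{L}(\overline{\text{Im}B},L^2)}+O(\sqrt{\varepsilon})\geq\Theta_*(t)$ of Proposition \ref{lowerbnd} (with $\delta\to 0$ and then $\zeta\to 0$ carried out inside the proposition), and only afterwards sends $\varepsilon\to 0$ in the Nussbaum estimate. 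With the limits reordered so that $\delta$ is innermost ($\zeta$ and $\varepsilon$ may then be taken in either order, since the $O(\zeta)$ relative error is $\varepsilon$-independent and the $O(\sqrt{\varepsilon})$ error is $\zeta$-independent), your argument goes through.
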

%%%%%%%%%%%%%%%%%%%%%%%%%%%%%%%%%%%%%%%%%%%%%%%%%%%%%%%

We need to work with the following seminorm to prove inequalities involving $\|\cdot\|_{\mathcal{L}(F)}$:

\begin{definition}
\label{F-norm}Let $\mathbb{P}: L^2_{sol} \to \text{Ker}B$ denote the orthogonal projection onto $\text{Ker}B$.  We define the $\mathcal{F}-$seminorm, $\|\cdot\|_{\mathcal{F}}$, on $\mathcal{L}(L^2_{sol})$ by $\|S\|_{ \mathcal{F}} := \|\mathbb{P}S\mathbb{P}\|_ {\mathcal{L}(L^2_{sol})}$.
\end{definition}

\begin{remark}\label{F-normrmk}  Because $L^2_{sol} = \overline{\text{Im}B} \oplus \text{Ker}B$, if $T \in \mathcal{L}(L^2_{sol})$ leaves $\overline{\text{Im}B}$ invariant, we have
\begin{equation}
 \label{FnormT}
\|T\|_{\mathcal{L}(F)}= \sup_{\substack{x\in L^2_{sol}\\ \mathbb{P} x\neq 0}}\frac{\|\mathbb{P} T \mathbb{P} x\|_{L^2_{sol}}}{\|\mathbb{P} x\|_{L^2_{sol}}} =  \| T\|_{\mathcal{F}}.
\end{equation}    
\end{remark}

Before proving Theorem \ref{3Dmainthm} we prove a proposition:
%%%%%%%%%%%%%%%%%%%%%%%%%%%%%%%%%%%%%%%%%%%%%%%%%%%%%%%
\begin{prop}
\label{lowerbnd}  
Fix $T>0$.
\begin{itemize}
\item[(i)] Let $\Theta_*(t)$ denote the following quantitity:
\begin{equation*}
\Theta_*(t) =  \sup_{\substack {(x_0,\xi_0,b_0)\in \mathcal{A}\\x_0 \in \supp(\omega)}} |b(x_0, \xi_0, b_0; t)|.
\end{equation*}
Then for any $\varepsilon > 0$ and $t\in[0,T]$
\begin{equation*}
\|G_{\varepsilon}^s(t)\|_{\mathcal{L}(\overline{\text{Im}B},L^2)} + O(\sqrt{\varepsilon})\geq \Theta_*(t),
\end{equation*}
where the constant in O is uniform for $t\in [0,T]$.
\item[(ii)] Whenever $\supp(\omega)$ is a proper subset of the fluid domain, $\T^3$, define $\Theta_F(t)$ by
\begin{equation*}
\Theta_F(t) =  \sup_{\substack {(x_0,\xi_0,b_0)\in \mathcal{A} \\x_0 \notin \supp(\omega)}} |b(x_0, \xi_0, b_0; t)|,
\end{equation*}
Then for any $\varepsilon > 0$ and $t\in[0,T]$
\begin{equation*}
\|G_{\varepsilon}^s(t)\|_{\mathcal{F}} + O(\sqrt{\varepsilon})\geq \Theta_F(t).
\end{equation*}
where the constant in O is uniform for $t\in [0,T]$.
\end{itemize}
\end{prop}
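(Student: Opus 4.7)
The plan is to construct, for each admissible triple $(x_0,\xi_0,b_0)\in\mathcal{A}$ contributing to $\Theta_*(t)$ or $\Theta_F(t)$, an explicit test vector field of type $\psi_{\zeta,\delta}$ from (\ref{definepsi}) whose image under $G_\varepsilon^s(t)$ realizes the growth factor $|b(x_0,\xi_0,b_0;t)|$ up to errors absorbable into $O(\sqrt{\varepsilon})$. The principal tools are Lemma \ref{inimage} (placing the test field near $\overline{\text{Im}B}$ in part (i)), Lemma \ref{imagepsilemma} with Remark \ref{Atobremark} (computing $G_\varepsilon^s(t)\psi_{\zeta,\delta}$ explicitly), and the flow-invariance of $\supp(\omega)^c$ (for part (ii)).

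For part (i), I first choose $(x_0,\xi_0,b_0)\in\mathcal{A}$ with $x_0\in\supp(\omega)$ nearly attaining $\Theta_*(t)$. By continuity of $(x,\xi,b)\mapsto|b(x,\xi,b;t)|$ and density, I may assume $\omega(x_0)\neq 0$, $(\omega(x_0),\xi_0)\neq 0$, and, after homogeneous rescaling in $\xi$, that $\xi_0\in\Z^3$ with a compatible $\delta^{-1}\in\Z_+$. With $h_\zeta(x):=h_0((x-x_0)/\zeta)$ for $h_0\in C_c^\infty(B_1(0))$ and $P:=b_0$, Lemma \ref{inimage} produces $\phi_{\zeta,\delta}:=B(\overline{\psi}_{\zeta,\delta})\in\overline{\text{Im}B}$ with $\|\phi_{\zeta,\delta}-\psi_{\zeta,\delta}\|_{L^2}\le c_0\zeta^{5/2}+O(\delta)$, and a direct computation from (\ref{psiexpanded}) gives $\|\psi_{\zeta,\delta}\|_{L^2}=\zeta^{3/2}\|h_0\|_{L^2}+O(\delta\zeta^{1/2})$. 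Arguing that $\opeps^s[a_0]$ agrees with $\opeps[a_0]$ on the solenoidal, high-frequency input $\psi_{\zeta,\delta}$ up to $O(\sqrt{\varepsilon})$ (via Lemma \ref{sigmaS-1} applied to the symbol difference), Lemma \ref{imagepsilemma} and Remark \ref{Atobremark} give
\begin{equation*}
G_\varepsilon^s(t)\psi_{\zeta,\delta}(x)=h_\zeta(g^{-t}x)\,b(g^{-t}x,\xi_0,b_0;t)\,e^{ig^{-t}x\cdot\xi_0/\delta}+r(x),
\end{equation*}
with $\|r\|_{L^2}=O(\delta)+O(\sqrt{\varepsilon})$. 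A change of variables $y=g^{-t}x$ (the flow $g^t$ is volume preserving) combined with Lipschitz continuity of $y\mapsto b(y,\xi_0,b_0;t)$ on $B_\zeta(x_0)$ yields $\|G_\varepsilon^s(t)\psi_{\zeta,\delta}\|_{L^2}=(|b(x_0,\xi_0,b_0;t)|+O(\zeta))\zeta^{3/2}\|h_0\|_{L^2}+O(\sqrt{\varepsilon})$. Since $|b_0|=1$, choosing $\delta=O(\sqrt{\varepsilon})$ and $\zeta$ satisfying $\delta\ll\zeta^{3/2}$ makes the ratio $\|G_\varepsilon^s(t)\phi_{\zeta,\delta}\|_{L^2}/\|\phi_{\zeta,\delta}\|_{L^2}\ge|b(x_0,\xi_0,b_0;t)|-O(\sqrt{\varepsilon})$, and taking the supremum over near-extremal triples proves (i).

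For part (ii), the key additional ingredient is that for a steady Euler flow the vorticity equation $(u\cdot\nabla)\omega=(\omega\cdot\nabla)u$ forces $\omega\circ g^t=0$ wherever $\omega$ initially vanishes, so $\supp(\omega)^c$ is $g^t$-invariant. Thus for $x_0\notin\supp(\omega)$ and $\zeta$ small enough, $\psi_{\zeta,\delta}$ is supported in $\supp(\omega)^c$, and by Lemma \ref{imagepsilemma} the principal term of $G_\varepsilon^s(t)\psi_{\zeta,\delta}$ is supported in a neighborhood of $g^tx_0$, still in $\supp(\omega)^c$. On $\supp(\omega)^c$ any divergence-free field $v$ satisfies $\omega\times v\equiv 0$ and hence $Bv=0$, so both $\psi_{\zeta,\delta}$ and the principal term of $G_\varepsilon^s(t)\psi_{\zeta,\delta}$ lie in $\text{Ker}B$ up to $O(\delta)+O(\sqrt{\varepsilon})$. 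Consequently $\mathbb{P}\psi_{\zeta,\delta}=\psi_{\zeta,\delta}$ and $\mathbb{P}G_\varepsilon^s(t)\psi_{\zeta,\delta}=G_\varepsilon^s(t)\psi_{\zeta,\delta}$ within the same error, and Definition \ref{F-norm} together with the $L^2$-calculation from part (i) (now applied to $\psi_{\zeta,\delta}$ itself) yields $\|G_\varepsilon^s(t)\|_{\mathcal{F}}\ge|b(x_0,\xi_0,b_0;t)|-O(\sqrt{\varepsilon})$, and the supremum gives (ii).

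The hardest step will be transferring Lemma \ref{imagepsilemma}, stated for $\opeps[a_0]$, to the operator $\opeps^s[a_0]$ appearing in $G_\varepsilon^s(t)$ with only $O(\sqrt{\varepsilon})$ degradation; this should use that $\opeps^s[a_0]-\opeps[a_0]$ acts as a lower-order pseudodifferential operator on solenoidal inputs, bounded via Lemma \ref{sigmaS-1}. A secondary difficulty is coordinating $\delta,\zeta,\varepsilon$: the constraint $\delta\lesssim|\xi_0|\sqrt{\varepsilon}$ needed to place $\xi_0/\delta$ above the high-frequency cutoff in $a_0(\cdot,\cdot,0)$ is what pins the final error at $O(\sqrt{\varepsilon})$ rather than arbitrary $o(1)$, and the relative sizes of $\zeta$ and $\delta$ must be chosen so that both the $O(\zeta)$ Lipschitz error and the $O(\delta/\zeta^{3/2})$ normalization error fit inside that bound. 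A minor additional point is justifying the density argument that restricting to $\omega(x_0)\neq 0$, $(\omega(x_0),\xi_0)\neq 0$, and rational $\xi_0$ does not lose the supremum in part (i).
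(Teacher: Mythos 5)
Your proposal is correct and follows essentially the same route as the paper: the same localized fast-oscillating test fields $\psi_{\zeta,\delta}$, Lemma \ref{inimage} to place them near $\overline{\text{Im}B}$ in part (i), the flow-invariance of $\T^3\setminus\supp(\omega)$ and membership in $\text{Ker}B$ in part (ii), Lemma \ref{imagepsilemma} for the explicit image, and Lemma \ref{sigmaS-1} applied to the order $-1$ symbol difference to reduce $G^s_\varepsilon(t)$ to $\opeps[a_0]\circ\mathfrak{g}_u(t)$ at cost $O(\sqrt{\varepsilon})$. The only (harmless) difference is bookkeeping: the paper sends $\delta\to0$ and then $\zeta\to0$ for each fixed $\varepsilon$, so the entire $O(\sqrt{\varepsilon})$ comes from the operator-norm comparison rather than from coupling $\delta$ to $\sqrt{\varepsilon}$ as you do.
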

%%%%%%%%%%%%%%%%%%%%%%%%%%%%%%%%%%%%%%%%%%%%%%%%%%%%%%%%%%%%

\begin{remark}
\label{limitexist}
Because $\{A_0(x,\xi,t):(x,\xi)\in T^*(\T^n), t\geq0\}$ is a strongly continuous cocyle over the flow $\{g^t\}_{t\in\R}$, we have that $\log\Theta_*(t)$and $\log \Theta_F(t)$ are subadditive, which implies that both limits from the statement of Theorem \ref{3Dmainthm} exist.
\end{remark}
To prove this proposition we will choose appropriate sequences of fast oscillating vector fields (one that is almost in $\overline{\text{Im}B}$ and one that is in $\text{Ker}B$) and show that the appropriate norms of their images under $G_{\varepsilon}^s(t)$ approach $\Theta_*(t)$ and $\Theta_F(t)$, respectfully, from below.

\begin{proof}[Proof of Proposition \ref{lowerbnd}] First we prove part (\textit{i}).  Choose $x_0 \in \T^3$ and $\xi_0 \in \Z^3$ such that $(\omega(x_0),\xi_0) \neq 0$ and $h_0 \in C^{\infty}(\T^3)$ with $\supp h_0 \subset B_0(1)$ and $h_0(0) = 1$. Let $0 < \zeta < 1$ and define $h_{\zeta} \in C^{\infty}(\T^3)$ by
\begin{equation*}
 h_{\zeta} := h_0\Bigl(\frac{x-x_0}{\zeta}\Bigr).
\end{equation*}
 Then by Lemma \ref{inimage} there exists $\overline{\psi}_{\zeta,\delta} \in L^2_{sol}$ such that
\begin{equation}
\label{Bpsizd}
B(\overline{\psi}_{\zeta,\delta})(x) = \psi_{\zeta,\delta}(s) + r_{\zeta} + r_{\delta},
\end{equation}
where the $\|r_{\zeta}\|_{L^2}\leq c_0\zeta^{5/2}$ for $c_0$ independent of $\delta$ and $\|r_{\delta}\|_{L^2} = O(\delta)$.
Then if we expand $\psi_{\zeta,\delta}$ as in line (\ref{psiexpanded}), we have
\begin{equation}
\label{Bpsi}
B(\overline{\psi}_{\zeta,\delta})(x) = h_{\zeta}(x)Pe^{ix\cdot\xi_0/\delta} + r_{\zeta} + \overline{r}_{\delta},
\end{equation}
where
\begin{equation*}
 \overline{r}_{\delta} = r_{\delta} + \delta\bigr[\nabla h_{\zeta}(x) \times \Bigr( \frac{i\xi_0\times P}{|\xi_0|^2} \Bigl)e^{ix\cdot\xi_0/\delta}\bigl].
\end{equation*}
It follows that $\|\overline{r}_{\delta}\|_{L^2} = O(\delta)$.
Apply Lemma \ref{imagepsilemma} to the main order term in the expansion (\ref{Bpsizd}) for $B(\overline{\psi}_{\zeta,\delta})$ to estimate
\begin{align*}
 (\opeps[a_0]\circ\mathfrak{g}_u(t)& B(\overline{\psi}_{\zeta,\delta}))(x)\\ &= h_{\zeta}(g^{-t}x)b(g^{-t}x,\xi_0,P;t)e^{ig^{-t}x\cdot\xi_0/\delta} + \tilde{r}_{\zeta}(x) + \tilde{r}_{\delta}(x),
\end{align*}
where $\tilde{r}_{\delta} = \opeps[a_0]\circ\mathfrak{g}_u(t)\overline{r}_{\delta}$ and $\tilde{r}_{\zeta} = \opeps[a_0]\circ\mathfrak{g}_u(t)r_{\zeta}$.  Hence $\|\tilde{r}_{\delta}\|_{L^2} = O(\delta)$ and $\|\tilde{r}_{\zeta}\|_{L^2} \leq \tilde{c}_0\zeta^{5/2}$ where $\tilde{c}_0 := c_0\|\opeps[a_0]\|_{\mathcal{L}(L^2)}$ does not depend on $\delta$.  It follows that
\begin{equation*}
 \lim_{\delta \to 0}\|(\opeps[a_0]\circ\mathfrak{g}_u(t) B(\overline{\psi}_{\zeta,\delta}))(x)\|_{L^2} = \|h_{\zeta}(g^{-t}x)b(g^{-t}x,\xi_0,P;t)\|_{L^2} + O(\zeta^{5/2}).
\end{equation*}
Then from line (\ref{Bpsi}) we have $\|B(\overline{\psi}_{\zeta,\delta})\|_{L^2} = \|h_{\zeta}P\|_{L^2} + O(\zeta^{5/2}) + O(\delta)$, thus we may estimate
\begin{align}
\nonumber
\|\text{op}_\varepsilon[a_0] \circ \mathfrak{g}_u(t)&\|_{\mathcal{L}(\overline{\text{Im}B},L^2)} + O(\zeta^{5/2})\\
 &\geq \sup_{\substack{x_0\in \T^3, \xi_0 \in\Z^3\\ (\omega(x_0),\xi_0)\neq 0\\P\perp\xi_0 }}
\frac{\|h_{\zeta}(g^{-t}x)b(g^{-t}x,\xi_0,P;t)\|_{L^2}}{\|h_{\zeta}P\|_{L^2}}\\
\label{Imsup}
&= \sup_{\substack{x_0\in \supp (\omega), \xi_0\in \Z^3\\P\perp\xi_0 }}
\frac{\|h_{\zeta}(g^{-t}x)b(g^{-t}x,\xi_0,P;t)\|_{L^2}}{\|h_{\zeta}P\|_{L^2}}
\end{align}
where the equality in the second line comes from taking the closure of the pairs $(x_0,\xi_0) \in \T^3\times \Z^3$ such that $(\omega(x_0),\xi_0) \neq 0$.  Next we take the limit as $\zeta \to 0$.  The flow map $g^t$ is measure preserving, so composition with it will not affect the norm in $L^2$.  Also $h_{\zeta}(x_0) = 1$ and $b(\cdot,\cdot,P;t)$ depends linearly on $P$, so if we take the limit in $\zeta$ of the expression in (\ref{Imsup}) we have
\begin{equation*}
 \lim_{\zeta \to 0} \frac{\|h_{\zeta}(g^{-t}x)b(g^{-t}x,\xi_0,P;t)\|_{L^2}}{\|h_{\zeta}P\|_{L^2}} = |b(x_0,\xi_0,\frac {P}{|P|};t)|.
\end{equation*}
We can approximate any $\xi \in \R^3$ by $\xi_0 \in \Z^3$ and $b$ is homogeneous of degree 0 in $\xi_0$, so it is equivalent to take the supremum in the RHS of (\ref{Imsup}) over $\xi_0 \in \R^3$ with $|\xi_0| = 1$.  Hence
\begin{equation}
\label{limitA}
 \|\text{op}_\varepsilon[a_0] \circ \mathfrak{g}_u(t)\|_{\mathcal{L}(\overline{\text{Im}B},L^2)} \geq \sup_{\substack{(x_0, \xi_0, b_0)\in \mathcal{A}\\ x_0 \in \supp(\omega)}} |b(x_0, \xi_0, b_0; t)| = \Theta(t).
\end{equation}
Therefore,
\begin{equation*}
\|\text{op}_\varepsilon[a_0] \circ \mathfrak{g}_u(t)\|_{\mathcal{L}(\overline{\text{Im}B},L^2)} \geq \Theta_*(t).
\end{equation*}

Now we prove an estimate for part (\textit{ii}).  Recall, the factor space $F:= L^2_{sol}/\overline{\text{Im}B}$.  Consider a vector field $\psi_{\delta} \in C^{\infty}_{sol}(\T^3)$, defined as in (\ref{definepsi}) with a condition on its support to ensure it will be in $\text{Ker}B$:
 \begin{equation}
\psi_{\delta}(x) = \delta \nabla \times \left(\frac{i\xi_0 \times P}{|\xi_0|^2}h_0(x)e^{ix\cdot\xi_0/\delta}\right),
\end{equation}
where $\xi_0 \in \Z^3, \delta ^{-1} \in \Z_+, P \perp \xi_0$ is a constant vector and $h_0 \in C^{\infty}(\T^3)$ is an arbitrary smooth scalar function with $\supp (h_0)$  disjoint from $\supp(\omega)$.   This implies that $\supp (\psi_{\delta})$ is disjoint from $\supp(\omega)$.  It follows that $\psi_{\delta}\in \text{Ker}B$.  If we apply Lemma \ref{imagepsilemma} to $\psi_{\delta}$ we have
\begin{equation}
 \label{super3}
\|\text{op}_{\varepsilon}[a_0]\circ\mathfrak{g}_u(t) \psi_{\delta}\|_F = \|h_0(g^{-t}\cdot)b(g^{-t}\cdot,\xi_0,P;t)e^{i(\cdot)\cdot\xi_t/\delta}\|_{F} + O(\delta),
\end{equation}
where $\|\cdot\|_F$ denotes the canonical factor space norm.
The complement of $\supp(\omega)$ is invariant under the flow $g^t$, so we have $\supp (h \circ g^{-t})$ is also disjoint from $\supp (\omega)$.  Hence
\begin{equation*}
h_0(g^{-t}x)b(g^{-t}x,\xi_0,P;t)e^{ig^{-t}x\cdot\xi_0/\delta} \in \text{Ker}B.
\end{equation*}
It follows that
\begin{equation*}
\frac{\|\mathbb{P}(h_0(g^{-t}x)b(g^{-t}x,\xi_0,P;t)e^{ig^{-t}x\cdot\xi_0/\delta})\|_{F}}{\|\mathbb{P}(\psi_{\delta})\|_{F}} = \frac{\|h_0(g^{-t}x)b(g^{-t}x,\xi_0,P;t)e^{ig^{-t}x\cdot\xi_0/\delta}\|_{L^2}}{\|\psi_{\delta}\|_{L^2}}.
\end{equation*}
Now consider equation (\ref{super3}) and take the limit as $\delta \to 0$ and we have
\begin{equation}
\label{Kersup}
\|\text{op}_\varepsilon[a_0] \circ \mathfrak{g}_u(t)\|_{\mathcal{F}}
\geq \sup_{\substack{h_0\in C^\infty(\T^3), \xi_0\in \Z^3 \setminus \{0\}\\
   \text{supp }h_0 \cap \supp(\omega) = \emptyset \\ P\perp \xi_0}}
\frac{\|h_0(g^{-t}x)b(g^{-t}x,\xi_0,P;t)\|_{L^2}}{\|h_0P\|_{L^2}}.
\end{equation}

  We are taking a supremum over all $h_0 \in C^{\infty}(\T^3)$ with $\supp (h_0)$ disjoint from $\supp (\omega)$ and $\T^3 \setminus\supp (\omega)$ is invariant under the flow map, so we can restrict our consideration to $x_0 \notin \supp (\omega)$.  The flow map $g^{-t}$ is measure preserving, so that change of coordinates will not affect the $L^2$-norm.  Also, since $b$ is homogeneous of degree 0 in $\xi_0$ and linear in $P$ we have
\begin{equation*}
 \|\text{op}_\varepsilon[a_0] \circ \mathfrak{g}_u(t)\|_{\mathcal{F}} \geq \sup_{\substack {(x_0,\xi_0,b_0)\in \mathcal{A} \\x_0 \notin \supp(\omega)}} |b(x_0, \xi_0, b_0; t)| = \Theta_F(t).
\end{equation*}

To finish the proof for both classes of perturbations, we must estimate the difference:
\begin{equation*}
\|G_\varepsilon^s(t) - \opeps[a_0] \circ \mathfrak{g}_u(t)\|_{\mathcal{L}(L^2)}.
\end{equation*}
We may simply extend the definition of $G_{\varepsilon}^s$ to all of $L^2$ so that for $v\in L^2$
\begin{equation*}
 (G^s_{\varepsilon}(t)v)(x) = \nabla_x \times \frac{\varepsilon}{(2\pi\varepsilon)^3}
\int \frac{i\xi}{|\xi|^2}\times a_0(x,\xi,t)\mathfrak{g}_u(t)v(y)e^{i(x-y)\cdot\xi/\varepsilon}dyd\xi.
\end{equation*}
Notice that the matrix $a_0(x,\xi,t)$ maps into $\xi^{\perp}$ for all $t$.  Since $i\xi \times (i\xi \times w)= w$ whenever $w \perp \xi$, this implies
\begin{equation*}
 \opeps[a_0]\circ \mathfrak{g}_u(t)v = \nabla_x(e^{ix \cdot \xi/\varepsilon}) \times \frac{\varepsilon}{(2\pi\varepsilon)^3}\int \frac{i\xi}{|\xi|^2}\times a_0(x,\xi,t)\mathfrak{g}_u(t)v(y)e^{-iy\cdot\xi/\varepsilon}dyd\xi .
\end{equation*}
Hence
\begin{align*}
 G_\varepsilon^s(t) - \opeps[a_0] \circ \mathfrak{g}_u(t)=& \varepsilon\opeps \Bigl[\nabla_x \times \Bigl(\frac{i\xi}{|\xi|^2}\times a_0  \Bigr)\Bigr]\circ \mathfrak{g}_u(t)\\
\label{opD}
=& \text{op}_1\Bigl[\nabla_x \times \Bigl(\frac{i\xi}{|\xi|^2}\times a_0(x,\varepsilon\xi,t)\Bigr)\Bigr]\circ \mathfrak{g}_u(t).
\end{align*}
Consider the symbol $D(x,\xi,t)$ defined by
\begin{equation*}
D(x,\xi,t):= \nabla_x \times\Bigl(\frac{i\xi}{|\xi|^2}\times a_0(x,\varepsilon\xi,t)  \Bigr).
\end{equation*}
For large $|\xi|$, $D(x,\xi,t)$ has homogeneity of order $-1$.  Since $a_0(x,\varepsilon\xi,t) = \bigl(1 - X(x,\sqrt{\varepsilon}\xi,t)\bigr)A_0(x,\xi,t)$, it follows that there is some constant $c(T)$ that depends on $T$ only such that for any $t\in[0,T]$, $D(x,\xi,t) = 0$ whenever $|\xi| < \frac{c(T)}{\sqrt{\varepsilon}}$.  We also note that for any $\beta,\ \gamma \in \Z^3$, there exists a constant $C_{\beta,\gamma}(T)$ such that
\begin{equation*}
 |\partial_x^{\beta} \partial_\xi^{\gamma}D(x,\xi,t)| \leq C_{\beta,\gamma}(T)(1 + |\xi|)^{-1-|\gamma|}\hspace{.3 cm}\text{ for any }t\in[0,T].
\end{equation*}
Now we may apply Lemma \ref{sigmaS-1} to get
\begin{equation*}
\|G_\varepsilon^s(t) - \opeps[a_0] \circ \mathfrak{g}_u(t)\|_{\mathcal{L}(L^2)} =  \|\text{op}_1[D(x,\xi,t)]\|_{\mathcal{L}(L^2)}  = O(\sqrt{\varepsilon}).
\end{equation*}
We remark that in the proof of Lemma \ref{sigmaS-1} the constant in $O$ depends only on the constants $C_{\beta,\gamma}(T)$ and $c(T)$, so $O(\sqrt{\varepsilon})$ is uniform for $t\in[0,T]$.  Then from the definition of the $\mathcal{F}-$seminorm, we have
\begin{equation}
\label{Fnormestimate}
\|G_\varepsilon^s(t) - \opeps[a_0] \circ \mathfrak{g}_u(t)\|_{\mathcal{F}} \leq \|G_\varepsilon^s(t) - \opeps[a_0] \circ \mathfrak{g}_u(t)\|_{\mathcal{L}(L^2_{sol})} = O(\sqrt{\varepsilon}),
\end{equation}
where the constants in $O$ are uniform for $t\in [0,T]$.  This completes the proof.

\end{proof}
%%%%%%%%%%%%%%%%%%%%%%%%%%%%%%%%%%%%%%%%%%%%%%%%%%%%%%%%%%%%%%%%%%%

Now we prove the main theorem of this Chapter:
\begin{proof}[Proof of Theorem \ref{3Dmainthm}]
We begin with statement \textit{(i)}.  Let $C \in \mathcal{L}(L^2)$ be an arbitrary operator of finite rank.  Then we get the following inequality for any $\varepsilon >0$.
\begin{equation}
\label{GplusC}
\|G(t)+C\|_{\mathcal{L}(L^2)} \geq \|(G(t)+C)\circ \text{op}_{\varepsilon}\bigl[1-\chi\bigl(\frac{\xi}{\sqrt{\varepsilon}}\bigr)\bigr] \|_{\mathcal{L}(L^2)}.
\end{equation}
Since $C$ has finite rank, we may write
\begin{equation*}
C = \sum_{j=1}^M g_j(f_j,\cdot),
\end{equation*}
for some $\{g_j\}_{j=1}^M, \{f_j\}_{j=1}^M \subset L^2$.  Since $\opeps\bigl[1-\chi\bigl(\frac{\xi}{\sqrt{\varepsilon}}\bigr)\bigr]$ is self-adjoint, it follows that
\begin{align*}
\|C \circ \opeps\bigl[1-\chi\bigl(\frac{\xi}{\sqrt{\varepsilon}}\bigr)\bigr]\|_{\mathcal{L}(L^2)} =& \|\sum_{j=1}^M g_j(\opeps\bigl[1-\chi\bigl(\frac{\xi}{\sqrt{\varepsilon}}\bigr)\bigr]f_j,\cdot)\|_{\mathcal{L}(L^2)}\\
=& o(1) \text{ as }\varepsilon \to 0,
\end{align*}
since for each $j = 1,2...M$,
\begin{equation*}
\|\opeps\bigl[1-\chi\bigl(\frac{\xi}{\sqrt{\varepsilon}}\bigr)\bigr]f_j\|_{\mathcal{L}(L^2)} = o(1) \text{  as } \varepsilon \to 0.
\end{equation*}
This implies
\begin{equation}
\label{blah}
 \|C \circ \opeps\bigl[1-\chi\bigl(\frac{\xi}{\sqrt{\varepsilon}}\bigr)\bigr]\|_{\mathcal{L}(\overline{\text{Im}B}, L^2)} = o(1) \text{ as }\varepsilon \to 0.
\end{equation}
Let $N \in \N$ and substitute $Nt$ with $t$ in (\ref{GplusC}).  Then by equation (\ref{blah}) above
\begin{equation*}
\|G(Nt)+C\|_{\mathcal{L}(\overline{\text{Im}B},L^2)} \geq \|G(Nt)\circ \text{op}_{\varepsilon}\bigl[1-\chi\bigl(\frac{\xi}{\sqrt{\varepsilon}}\bigr)\bigr]\|_{\mathcal{L}(\overline{\text{Im}B},L^2)} - o(1)\text{ as }\varepsilon \to 0.
\end{equation*}
From Theorem~\ref{VishikThm} we have
\begin{equation*}
\|G(Nt)+C\|_{\mathcal{L}(\overline{\text{Im}B},L^2)}\geq \|G^s_{\varepsilon}(Nt)\|_{\mathcal{L}(\overline{\text{Im}B},L^2)} -O(\sqrt{\varepsilon})-o(1) \text{ as }\varepsilon\to 0.
\end{equation*}
And Proposition \ref{lowerbnd} implies
\begin{equation*}
\|G(Nt)+C\|_{\mathcal{L}(\overline{\text{Im}B},L^2)}\geq \Theta_*(Nt)-O(\sqrt{\varepsilon})-o(1) \text{ as }\varepsilon\to 0.
\end{equation*}
Letting $\varepsilon \to 0$,
\begin{equation*}
\|G(Nt)+C\|_{\mathcal{L}(\overline{\text{Im}B},L^2)}\geq \Theta_*(Nt).
\end{equation*}
Since $C$ was arbitrary, we have
\begin{equation*}
\|G(Nt)\mid_{\overline{ImB}}\|_{\mathcal{K}} \geq \Theta_*(Nt),
\end{equation*}
where $\|\cdot\|_{\mathcal{K}}$ denotes Nussbaum's seminorm, introduced in Section \ref{approxG(t)}.  Take the $N$th root of both sides of the equation to get
\begin{equation*}
 \|G(Nt)\mid_{\overline{ImB}}\|^{1/N}_{\mathcal{K}}  \geq e^{t\frac{1}{Nt}\log (\Theta_*(Nt))}.
\end{equation*}
If we take the limits as $N\to \infty$, for 3-dimensional flows we have
\begin{equation*}
r_{ess}(G(t)\mid_{\overline{\text{Im}B}}) \geq e^{\mu_{3*} t}.
\end{equation*}
Thus we have the lower bound for $\overline{\text{Im}B}$.

To compute a lower bound for the factor space, we assume $\supp(\omega)$ is a proper subset of the fluid domain, $\T^3$.  In this case we may use Proposition \ref{lowerbnd}.  

For any $x\in L^2_{sol}$, we let $[x]\in F$ denote the equivalence class in $F:= L^2_{sol}/\overline{\text{Im}B}$ represented by $x$.   Any operator $K \in \mathfrak{S}_{\infty}(F)$ can be lifted to an operator $\overline{K}\in \mathfrak{S}_{\infty}$ as follows:  Let $\{\tilde{f}_j\}_{j=1}^{\infty}$ be a Schauder basis for $\text{Ker}B$.  In the canonical sense, $\{[\tilde{f}_j]\}_{j=1}^{\infty}$ is also a Schauder basis for the factor space, $F$. We may write
\begin{equation*}
K  = \sum_{j=1}^{\infty} [\tilde{g}_j]([\tilde{f}_j],\cdot),
\end{equation*}
where $\tilde{g}_j\in \text{Ker}B$ for each $j=1,2...$.  Then we define
\begin{equation}
 \label{Kbar}
\overline{K}  := \sum_{j=1}^{\infty} \tilde{g}_j(\tilde{f}_j,\cdot).
\end{equation}
Notice that $\overline{K}$ leaves $\overline{\text{Im}B}$ invariant and $\overline{K}_F = K$.

Let $\|\cdot\|_{\mathcal{K}(F)}$ be the Nussbaum seminorm on $F$.  Then
\begin{equation*}
 \|T_F\|_{\mathcal{K}(F)} := \inf_{K\in\ \mathfrak{S_{\infty}}(F)}\|T_F + K\|_{\mathcal{L}(F)}.
 \end{equation*}  
We begin with the inequality analogous to  (\ref{GplusC}) for $\|\cdot\|_{\mathcal{L}(F)}$.  For any finite rank operator $C\in \mathcal{L}(F)$ we have
\begin{equation*}
\|G_F(t)+C\|_{\mathcal{L}(F)}\geq\ \|(G(t)+C)\circ \text{op}_{\varepsilon}\bigl[1-\chi\bigl(\frac{\xi}{\sqrt{\varepsilon}}\bigr)\bigr] \|_{\mathcal{F}}.
\end{equation*}  
The argument is completely similar to that for the image, except that for the factor space we must be careful to use the seminorm $\|\cdot\|_{\mathcal{F}}$ whenever we estimate the size of an operator that is not well defined on the factor space.  This leads to 
\begin{equation*}
\|G_F(Nt)\|_{\mathcal{K}(F)} \geq \Theta_F(Nt), 
\end{equation*}
for $N\in\N$.  Take the $N$th root of both sides of the equation, exponentiate the RHS as we did for the image case and then take the limit as $N\to \infty$.  Thus for 3-dimensional flows where  $\supp(\omega)$ is a proper subset of $\T^3$ we have
\begin{equation*}
 r_{ess}(G_F(t)) \geq e^{\mu_{3F}t}.
\end{equation*}
\end{proof}
\begin{remark}
\label{mainthmrmk}
The proof of Theorem \ref{3Dmainthm} did not depend on our flow being 3-dimensional.  In Section \ref{2Dmainthm} we will introduce 2-dimensional propositions similar to Proposition \ref{lowerbnd} and reference the proof of \ref{3Dmainthm} to prove our main theorem for 2-dimensional flows, Theorem \ref{2Dmainthm}.
\end{remark}

We have the following corollaries to Theorem \ref{3Dmainthm}:
%%%%%%%%%%%%%%%%%%%%%%%%%%%%%%%%%%%%%%%%%%%%%%%%%%%%%%%%%%%%%%%%%%%
\begin{corollary}
\label{firstcorollary}For a 3-dimensional flow with vorticity $\omega$, if $\supp(\omega)$ is a proper subset of $\T^3$, then
 \begin{equation*}
r_{ess}(G(t)) = \max\{r_{ess}(G_F(t)), r_{ess}(G(t)\mid_{\overline{\text{Im}B}})\}.
\end{equation*}
\end{corollary}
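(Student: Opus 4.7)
The plan is to establish the equality by proving the two inequalities separately: the upper bound on $r_{ess}(G(t))$ will follow from combining Vishik's Theorem \ref{vishik} with the lower bounds of Theorem \ref{3Dmainthm}, while the reverse inequality is a routine consequence of Nussbaum's formula applied to the restriction and to the quotient.

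For the direction $r_{ess}(G(t))\leq\max\{r_{ess}(G(t)|_{\overline{\text{Im}B}}),r_{ess}(G_F(t))\}$, I would apply Theorem \ref{vishik} to write $r_{ess}(G(t))=e^{\mu t}$, where $\mu$ is the Lyapunov-type exponent formed over the full admissible set $\mathcal{A}$. Because $\supp(\omega)$ is proper, $\T^3$ decomposes as $\supp(\omega)\cup(\T^3\setminus\supp(\omega))$ with both pieces nonempty, so the supremum over $\mathcal{A}$ splits as the maximum of the two suprema defining $\mu_{3*}$ and $\mu_{3F}$. Using $\log\max=\max\log$ and the existence of both individual limits (which is guaranteed by the subadditive-cocycle argument underlying Remark \ref{limitexist}) one obtains $\mu=\max(\mu_{3*},\mu_{3F})$. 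Combining with Theorem \ref{3Dmainthm} yields
$$r_{ess}(G(t)) = \max\bigl(e^{\mu_{3*}t},e^{\mu_{3F}t}\bigr) \leq \max\bigl(r_{ess}(G(t)|_{\overline{\text{Im}B}}),r_{ess}(G_F(t))\bigr).$$

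For the reverse direction, I would invoke Nussbaum's formula (Theorem \ref{Nussbaum}) on each side. Let $Q$ and $\mathbb{P}$ be the orthogonal projections onto $\overline{\text{Im}B}$ and $\text{Ker}B$ respectively; since $B$ is skew-adjoint these are complementary. For any compact $K\in\mathcal{L}(L^2_{sol})$, the compressions $QK|_{\overline{\text{Im}B}}$ and $(\mathbb{P}K\mathbb{P})_F$ are compact on $\overline{\text{Im}B}$ and $F$, and using the $G(t)$-invariance of $\overline{\text{Im}B}$ together with the identity $\|\cdot\|_{\mathcal{L}(F)}=\|\cdot\|_{\mathcal{F}}$ of Remark \ref{F-normrmk} one verifies
$$\|G(t)|_{\overline{\text{Im}B}}+QK|_{\overline{\text{Im}B}}\|_{\mathcal{L}(\overline{\text{Im}B})}\leq\|G(t)+K\|_{\mathcal{L}(L^2_{sol})},$$
$$\|G_F(t)+(\mathbb{P}K\mathbb{P})_F\|_{\mathcal{L}(F)}\leq\|G(t)+K\|_{\mathcal{L}(L^2_{sol})}.$$
Taking the infimum over compact $K$, applying the same compressions to $G(t)^N$, extracting $N$th roots, and invoking Theorem \ref{Nussbaum} yields $r_{ess}(G(t)|_{\overline{\text{Im}B}})\leq r_{ess}(G(t))$ and $r_{ess}(G_F(t))\leq r_{ess}(G(t))$, which together give the reverse inequality.

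The only delicate step is the identification $\mu=\max(\mu_{3*},\mu_{3F})$; the interchange of $\lim\tfrac{1}{t}\log$ with the finite max is justified by the elementary squeeze $\max(f,g)\leq f+g\leq 2\max(f,g)$ once the individual limits are known to exist. Everything else reduces to bookkeeping with Nussbaum's seminorm and the compressions $K\mapsto QK|_{\overline{\text{Im}B}}$ and $K\mapsto(\mathbb{P}K\mathbb{P})_F$, which land $K$ back in the compact ideal of the appropriate reduced space.
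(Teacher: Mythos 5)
Your proposal is correct and follows essentially the same route as the paper: the forward inequality comes from splitting the supremum over $\mathcal{A}$ to get $\mu=\max(\mu_{3*},\mu_{3F})$ and combining Theorem \ref{vishik} with Theorem \ref{3Dmainthm}, while the reverse inequality is the compression/Nussbaum argument that the paper packages as Proposition \ref{ressprop}. The only difference is cosmetic: you explicitly prove the restriction inequality $r_{ess}(G(t)|_{\overline{\text{Im}B}})\leq r_{ess}(G(t))$, which the paper uses but states only the factor-space half of in Proposition \ref{ressprop}.
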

%%%%%%%%%%%%%%%%%%%%%%%%%%%%%%%%%%%%%%%%%%%%%%%%%%%%%%%%%%%%%%%%%%%
%%%%%%%%%%%%%%%%%%%%%%%%%%%%%%%%%%%%%%%%%%%%%%%%%%%%%%%%%%%%%%%%%%%
\begin{corollary}
\label{secondcorollary}
If the support of $\omega$ is the entire fluid domain, $T^3$, then
\begin{equation*}
r_{ess}(G(t)\mid_{\overline{\text{Im}B}}) = r_{ess}(G(t)).
\end{equation*}
\end{corollary}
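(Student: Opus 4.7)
The plan is to extract the ``$\geq$'' direction directly from the machinery already assembled (Theorem~\ref{3Dmainthm}(i) together with Vishik's Theorem~\ref{vishik}), and then to obtain the matching ``$\leq$'' direction as a general fact about restrictions to closed invariant subspaces via Nussbaum's formula.

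First, under the hypothesis $\supp(\omega)=\T^3$, the support constraint in the definition of $\mu_{3*}$ is vacuous, so the supremum defining $\mu_{3*}$ coincides with the supremum defining Vishik's Lyapunov-type exponent $\mu$. Hence $\mu_{3*}=\mu$. Theorem~\ref{3Dmainthm}(i) then gives
\begin{equation*}
r_{ess}\bigl(G(t)|_{\overline{\text{Im}B}}\bigr)\ \geq\ e^{\mu_{3*}t}\ =\ e^{\mu t}\ =\ r_{ess}(G(t)),
\end{equation*}
where the last equality is Theorem~\ref{vishik}. This handles one inequality without any further work.

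For the reverse inequality, I would argue abstractly using Nussbaum's formula (Theorem~\ref{Nussbaum}) and the fact that $\overline{\text{Im}B}$ is a closed $G(t)$-invariant subspace of $L^2_{sol}$. Let $\iota:\overline{\text{Im}B}\to L^2_{sol}$ denote inclusion and $\mathbb{P}:L^2_{sol}\to\overline{\text{Im}B}$ the orthogonal projection (available because $L^2_{sol}=\overline{\text{Im}B}\oplus\text{Ker}B$, as noted in Remark~\ref{F-normrmk}). For any compact operator $K$ on $L^2_{sol}$, invariance gives, for every $v\in \overline{\text{Im}B}$,
\begin{equation*}
\mathbb{P}(G(t)+K)\iota v\ =\ G(t)|_{\overline{\text{Im}B}}v + \mathbb{P}K\iota v,
\end{equation*}
and since $\|\mathbb{P}\|,\|\iota\|\leq 1$ we conclude
\begin{equation*}
\|G(t)+K\|_{\mathcal{L}(L^2_{sol})}\ \geq\ \bigl\|G(t)|_{\overline{\text{Im}B}}+\mathbb{P}K\iota\bigr\|_{\mathcal{L}(\overline{\text{Im}B})}.
\end{equation*}
Since $\mathbb{P}K\iota$ is compact on $\overline{\text{Im}B}$ whenever $K$ is compact on $L^2_{sol}$, and conversely any compact operator on $\overline{\text{Im}B}$ extends to a compact operator on $L^2_{sol}$ (e.g.\ by composition with $\mathbb{P}$), taking the infimum over $K$ shows
\begin{equation*}
\|G(t)\|_{\mathcal{K}}\ \geq\ \bigl\|G(t)|_{\overline{\text{Im}B}}\bigr\|_{\mathcal{K}}.
\end{equation*}
The same argument applied to $G(t)^n=G(nt)$, together with Nussbaum's formula, yields $r_{ess}(G(t))\geq r_{ess}(G(t)|_{\overline{\text{Im}B}})$, completing the proof.

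The only genuinely nontrivial step is the lifting argument in the second direction, and the main thing to be careful about is that $\mathbb{P}K\iota$ is genuinely compact on $\overline{\text{Im}B}$ and exhausts (up to composition with the identity on $\overline{\text{Im}B}$) the ideal of compact operators there; both facts are immediate from boundedness of $\mathbb{P}$ and $\iota$ together with the ideal property of the compact operators. No new analysis of the flow or of (BAS) is needed beyond what already underlies Theorems~\ref{vishik} and~\ref{3Dmainthm}.
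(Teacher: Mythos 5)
Your proof is correct, and its first half is exactly the paper's argument: under $\supp(\omega)=\T^3$ the constraint $x_0\in\supp(\omega)$ is vacuous, so $\mu_{3*}=\mu$, and Theorem~\ref{vishik} together with Theorem~\ref{3Dmainthm}(i) gives $r_{ess}(G(t))=e^{\mu t}\leq r_{ess}(G(t)|_{\overline{\text{Im}B}})$. Where you differ is that the paper stops there and concludes equality with a bare ``Hence,'' implicitly invoking the reverse inequality $r_{ess}(G(t)|_{\overline{\text{Im}B}})\leq r_{ess}(G(t))$ without proof; the paper's Proposition~\ref{ressprop} establishes the analogous bound only for the factor space $F$, not for the restriction to the invariant subspace. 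Your compression argument $\mathbb{P}(G(t)+K)\iota = G(t)|_{\overline{\text{Im}B}}+\mathbb{P}K\iota$, combined with Nussbaum's formula applied to $G(Nt)|_{\overline{\text{Im}B}}=(G(t)|_{\overline{\text{Im}B}})^N$, supplies that missing half cleanly and is the natural companion to Proposition~\ref{ressprop}. Two minor remarks: the orthogonal projection onto the closed subspace $\overline{\text{Im}B}$ exists regardless of the decomposition $L^2_{sol}=\overline{\text{Im}B}\oplus\text{Ker}B$, so that appeal is not needed; and the ``conversely any compact operator on $\overline{\text{Im}B}$ extends'' clause is superfluous, since for the inequality $\|G(t)\|_{\mathcal{K}}\geq\|G(t)|_{\overline{\text{Im}B}}\|_{\mathcal{K}}$ you only need that $\mathbb{P}K\iota$ is one admissible competitor in the infimum defining the right-hand side.
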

%%%%%%%%%%%%%%%%%%%%%%%%%%%%%%%%%%%%%%%%%%%%%%%%%%%%%%%%%%%%%%%%%%%%%%%%%%
Before proving these corollaries, we need the following proposition:
%%%%%%%%%%%%%%%%%%%%%%%%%%%%%%%%%%%%%%%%%%%%%%%%%%%%%%%%%%%%%%%%%%%%%%%%%%
\begin{prop}
\label{ressprop}
For 2- or 3-dimensional flows and for any $t>0$,
\begin{equation*}
 r_{ess}(G|_F(t)) \leq r_{ess}(G(t)).
\end{equation*}
\end{prop}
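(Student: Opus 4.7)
The plan is to apply Nussbaum's formula (Theorem \ref{Nussbaum}) to reduce the claim to the seminorm inequality
\[
\|G_F(t)^n\|_{\mathcal{K}(F)} \leq \|G(t)^n\|_{\mathcal{K}}
\]
for every $n\in\N$, since taking $n$-th roots and letting $n\to\infty$ then yields $r_{ess}(G_F(t)) \leq r_{ess}(G(t))$. The main tool is Remark \ref{F-normrmk}, which, for any $T\in \mathcal{L}(L^2_{sol})$ leaving $\overline{\text{Im}B}$ invariant, gives $\|T_F\|_{\mathcal{L}(F)} = \|\mathbb{P}T\mathbb{P}\|_{\mathcal{L}(L^2_{sol})}$, where $\mathbb{P}$ is the orthogonal projection onto $\text{Ker}B$.

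Given any $K\in \mathfrak{S_{\infty}}(L^2_{sol})$, the sandwiched operator $\mathbb{P}K\mathbb{P}$ is again compact and trivially preserves $\overline{\text{Im}B}$ (it vanishes there), so it descends to a compact operator $\tilde{K} := (\mathbb{P}K\mathbb{P})_F$ on $F$. Since $G(t)^n$ also preserves $\overline{\text{Im}B}$, so does $G(t)^n + \mathbb{P}K\mathbb{P}$, and Remark \ref{F-normrmk} combined with $\|\mathbb{P}\|_{\mathcal{L}(L^2_{sol})}=1$ yields
\begin{align*}
\|G_F(t)^n + \tilde{K}\|_{\mathcal{L}(F)} &= \|\mathbb{P}G(t)^n\mathbb{P} + \mathbb{P}K\mathbb{P}\|_{\mathcal{L}(L^2_{sol})} \\
&= \|\mathbb{P}(G(t)^n + K)\mathbb{P}\|_{\mathcal{L}(L^2_{sol})} \leq \|G(t)^n + K\|_{\mathcal{L}(L^2_{sol})}.
\end{align*}
Taking the infimum over $K\in \mathfrak{S_{\infty}}(L^2_{sol})$ on the right produces $\|G(t)^n\|_{\mathcal{K}}$, while the left-hand side bounds $\|G_F(t)^n\|_{\mathcal{K}(F)}$ from above, since the latter is an infimum over all compact operators on $F$ and in particular over those of the form $\tilde{K}$ produced above. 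This gives the desired seminorm inequality.

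No step presents a genuine obstacle: compact operators on the ambient space can always be pushed forward to compact operators on $F$ via the $\mathbb{P}\cdot\mathbb{P}$ sandwich, which is exactly the direction needed here. The opposite problem of lifting an arbitrary compact operator on $F$ to one on $L^2_{sol}$ is addressed in the proof of Theorem \ref{3Dmainthm} via a Schauder basis of $\text{Ker}B$, but that construction is not required for the present bound.
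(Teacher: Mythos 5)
Your proof is correct and takes essentially the same route as the paper's: both reduce, via Nussbaum's formula, to the seminorm inequality $\|G_F\|_{\mathcal{K}(F)}\leq\|G\|_{\mathcal{K}}$, and both establish it by associating to each compact $K$ on $L^2_{sol}$ the compact operator induced on $F$ by $\mathbb{P}K\mathbb{P}$, combined with Remark \ref{F-normrmk} and $\|\mathbb{P}\|_{\mathcal{L}(L^2_{sol})}=1$. The only cosmetic difference is that the paper phrases this correspondence as a lift from $F$ (via a Schauder basis of $\text{Ker}B$) and then observes that $\mathbb{P}C\mathbb{P}$ arises as such a lift, whereas you push forward directly.
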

%%%%%%%%%%%%%%%%%%%%%%%%%%%%%%%%%%%%%%%%%%%%%%%%%%%%%%%%%%%%%%%%%%%%%%%%
\begin{proof}

Let $\|\cdot\|_{\mathcal{K}(F)}$ be the Nussbaum seminorm on $F$.  Then from Remark \ref{F-normrmk} we have
\begin{equation*}
 \inf_{K\in\ \mathfrak{S_{\infty}}(F)}\|T_F + K\|_{\mathcal{L}(F)} = \inf_{K\in\ \mathfrak{S_{\infty}}(F)} \|T + \overline{K}\|_{\mathcal{F}},
\end{equation*}
where $\overline{K} \in \mathfrak{S}_{\infty}$ is the lift of $K\in \mathfrak{S}_{\infty}$ defined by (\ref{Kbar}).

Notice that for $C \in \mathfrak{S}_{\infty}$, there is some $K_C\in \mathfrak{S}_{\infty}(F)$ such that $ \overline{K_C} = \mathbb{P} C \mathbb{P}$, where $\overline{K_C}$ denotes the lift of $K_C$ in the sense of (\ref{Kbar}).  Since $\overline{K_C} = \mathbb{P}\overline{K_C}\mathbb{P}$, we have
\begin{equation*}
 \inf_{K\in\ \mathfrak{S_{\infty}}(F)} \|T + \overline{K}\|_{\mathcal{F}} \leq\ \inf_{C \in \mathfrak{S}_{\infty}} \|\mathbb{P}(T + \overline{K_C})\mathbb{P}\|_{\mathcal{L}(L^2_{sol})}
\leq\ \inf_{C \in \mathfrak{S}_{\infty}} \|T + C\|_{\mathcal{L}(L^2_{sol})}.
\end{equation*}
Thus, for any $T \in \mathcal{L}(L^2_{sol})$ which leaves $\overline{\text{Im}B}$ invariant we have $\|T_F\|_{\mathcal{K}(F)} \leq \|T\|_{\mathcal{K}(L^2_{sol})}$.  Thus for any $N\in \N$
\begin{equation*}
 \|G_F(Nt)\|_{\mathcal{K}(F)} \leq \|G(Nt)\|_{\mathcal{K}}.
\end{equation*}
Then we may repeat the computations above and apply Nussbaum's Theorem again to get
\begin{equation*}
  r_{ess}(G|_F(t)) \leq r_{ess}(G(t)).
\end{equation*}

\end{proof}

\begin{proof}[Proof of Corollary \ref{firstcorollary}.]
From the definitions of $\Theta_*(t)$ and $\Theta_F(t)$ we have
\begin{equation*}
 \sup_{(x_0,\xi_0,b_0)\in \mathcal{A}}|b(x_0,\xi_0,b_0;t)| = \max\{\Theta_*(t), \Theta_F(t)\}.
\end{equation*}
Thus $\mu = \max\{\mu_{3*},\mu_{3F}\}$ where $\mu$ is the Lyapunov-type exponent defined in Theorem \ref{vishik}.  By Theorem \ref{vishik} and Theorem \ref{3Dmainthm} we have
\begin{equation*}
 r_{ess}(G(t)) = e^{\mu t} = \max \{e^{\mu_{3*}t}, e^{\mu_{3F}t}\} \leq \max\{r_{ess}(G_F(t)), r_{ess}(G(t)\mid_{\overline{\text{Im}B}})\}.
\end{equation*}
Then by Proposition \ref{ressprop}
\begin{equation*}
 r_{ess}(G(t)) = \max\{r_{ess}(G_F(t)), r_{ess}(G(t)\mid_{\overline{\text{Im}B}})\}.
\end{equation*}

\end{proof}

\begin{proof}[Proof of Corollary \ref{secondcorollary}]
If we assume $\supp (\omega) = \T^3$, then $\mu = \mu_{3*}$, where $\mu$ is the Lyapunov-type exponent from Theorem \ref{vishik}.  Then by Theorem \ref{vishik} and Theorem \ref{3Dmainthm} $r_{ess}(G(t)) = e^{\mu t} \leq r_{ess}(G(t)|_{\overline{\text{Im}B}})$.  Hence $r_{ess}(G(t)) =  r_{ess}(G(t)|_{\overline{\text{Im}B}})$.

\end{proof}

\section{Main Theorems for 2-dimensional flows}
\label{mainthms2D}
In this section we prove the main theorem for 2-dimensional flows, Theorem \ref{2Dmainthm} below.  Here our vector field $u$ is two-dimensional smooth solution to steady Euler's equation (SE) with scalar vorticity $\omega := \curl u$.  The set of admissible initial conditions for (BAS) in 2-dimensions are the same:
\begin{equation*}
 \mathcal{A}:= \{(x_0,\xi_0,b_0)\in \T^2 \times \R^2 \times \R^2 |\ \xi_0 \perp b_0 ,\ |\xi_0|=|b_0|=1 \}.  
\end{equation*}   We begin with two propositions similar to Proposition \ref{lowerbnd} from Section \ref{mainthms3D}.
\begin{prop}
\label{2DlowerbndIm}
Fix $T>0$ and define $\Theta_*(t)$ by
\begin{equation*}
\Theta_*(t) =  \sup_{\substack {(x_0,\xi_0,b_0)\in \mathcal{A}\\x_0 \in \supp (\nabla \omega)}} |b(x_0, \xi_0, b_0; t)|.
\end{equation*}
Then for any $\varepsilon >0$ and $t\in [0,T]$ we have
\begin{equation*}
\|G_{\varepsilon}^s(t)\|_{\mathcal{L}(\overline{\text{Im}B}, L^2_{sol})} + O(\sqrt{\varepsilon}) \geq \Theta_*(t),
\end{equation*}
where the constant in O is uniform for $t\in[0,T]$.
\end{prop}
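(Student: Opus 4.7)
The plan is to run exactly the 2-dimensional analog of the argument used for Proposition \ref{lowerbnd}(i), substituting Lemma \ref{phideltainIm} for Lemma \ref{inimage} and Lemma \ref{imagephilemma} for Lemma \ref{imagepsilemma}. First I would pick a point $x_0$ in the dense subset of $\supp(\nabla\omega)$ where $\nabla\omega(x_0)\neq 0$, and a frequency vector $\xi_0\in\Z^2\setminus\{0\}$ satisfying $(\xi_0^{\perp},\nabla\omega(x_0))\neq 0$ (only the single line $\xi_0\parallel\nabla\omega(x_0)$ is excluded). Choose $h_0\in C^{\infty}(\T^2)$ supported in $B_1(0)$ with $h_0(0)=1$, set $h_\zeta(x):=h_0((x-x_0)/\zeta)$ (extended periodically, possible for small $\zeta$), and define the test vector
\begin{equation*}
\phi_{\zeta,\delta}(x):= -i\delta\nabla^{\perp}\bigl(h_\zeta(x)e^{ix\cdot\xi_0/\delta}\bigr).
\end{equation*}
By continuity of $\nabla\omega$, for $\zeta$ sufficiently small the hypothesis $|(\xi_0^{\perp},\nabla\omega(x))|>c_0$ of Lemma \ref{phideltainIm} holds on $\supp h_\zeta$, so the lemma yields $r_\delta\in L^2$ with $\|r_\delta\|_{L^2}=O(\delta)$ and $\phi_{\zeta,\delta}+r_\delta\in\overline{\text{Im}B}$. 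Hence $\phi_{\zeta,\delta}+r_\delta$ is an admissible test vector for $\|G_\varepsilon^s(t)\|_{\mathcal{L}(\overline{\text{Im}B},L^2_{sol})}$.

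Next I would apply Lemma \ref{imagephilemma} to approximate
\begin{equation*}
(\opeps[a_0]\circ\mathfrak{g}_u(t))\phi_{\zeta,\delta}(x) = h_\zeta(g^{-t}x)\,b(g^{-t}x,\xi_0,\xi_0^{\perp};t)\,e^{ig^{-t}x\cdot\xi_0/\delta} + \tilde r_\delta,
\end{equation*}
with $\|\tilde r_\delta\|_{L^2}=O(\delta)$, and compute the test ratio. The expansion (\ref{phiexpanded}) gives $\|\phi_{\zeta,\delta}\|_{L^2}=|\xi_0|\,\|h_\zeta\|_{L^2}+O(\delta)$, while measure preservation of $g^{-t}$ yields $\|h_\zeta(g^{-t}\cdot)b(g^{-t}\cdot,\xi_0,\xi_0^{\perp};t)\|_{L^2}=\|h_\zeta(\cdot)b(\cdot,\xi_0,\xi_0^{\perp};t)\|_{L^2}$. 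Letting $\delta\to 0$ and then $\zeta\to 0$, continuity of $b$ in $x$ sends the ratio to $|b(x_0,\xi_0,\xi_0^{\perp};t)|/|\xi_0|$, which by the $0$-homogeneity of (BAS) in $\xi$ and its linearity in $b$ equals $|b(x_0,\xi_0/|\xi_0|,(\xi_0/|\xi_0|)^{\perp};t)|$, i.e., the value at an admissible initial condition in $\mathcal{A}$.

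Taking suprema over the dense family of $(x_0,\xi_0)$ just described, and using joint continuity of $b$, the resulting lower bound exhausts $\Theta_*(t)$. Finally, the symbolic estimate used at the end of the proof of Proposition \ref{lowerbnd} — based on Lemma \ref{sigmaS-1} applied to the commutator symbol $\nabla_x\times(i\xi/|\xi|^2\times a_0(x,\varepsilon\xi,t))$ — gives $\|G_\varepsilon^s(t)-\opeps[a_0]\circ\mathfrak{g}_u(t)\|_{\mathcal{L}(L^2)}=O(\sqrt{\varepsilon})$ uniformly for $t\in[0,T]$, supplying the $O(\sqrt{\varepsilon})$ correction in the statement.

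The main obstacle I anticipate is precisely this density/continuity step: Lemma \ref{phideltainIm} requires $|(\xi_0^{\perp},\nabla\omega(x))|>c_0$ on $\supp h_\zeta$, so it is inapplicable at points $x_0\in\supp(\nabla\omega)$ where $\nabla\omega$ actually vanishes, or for $\xi_0$ exactly parallel to $\nabla\omega(x_0)$. Fortunately the set of $x_0$ with $\nabla\omega(x_0)\neq 0$ is dense in $\supp(\nabla\omega)$ by definition of the support, and for each such $x_0$ the admissible $\xi_0$ omit only a one-dimensional line, hence form a dense subset of the unit circle (even within $\Z^2$ directions). Joint continuity of $b(x_0,\xi_0,b_0;t)$ in its initial data therefore transfers the lower bound from this dense subset to the full supremum defining $\Theta_*(t)$.
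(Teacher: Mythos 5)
Your proposal is correct and follows essentially the same route as the paper's proof: test vectors $\phi_{\zeta,\delta}$ made admissible via Lemma \ref{phideltainIm}, evolution approximated by the $\opeps[a_0]\circ\mathfrak{g}_u(t)$ lemma, the limits $\delta\to 0$ then $\zeta\to 0$, closure/density in the initial data to pass from the condition $(\xi_0^{\perp},\nabla\omega(x_0))\neq 0$ to all of $\supp(\nabla\omega)$, and the Calderon--Vaillancourt-based $O(\sqrt{\varepsilon})$ estimate. The only cosmetic difference is that you make the localization parameter $\zeta$ explicit where the paper instead says ``$h_0$ localized at $x_0$'' and refers back to the argument for line (\ref{limitA}).
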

%%%%%%%%%%%%%%%%%%%%%%%%%%%%%%%%%%%%%%%%%%%%%%%%%%%%%%%%%%%
%%%%%%%%%%%%%%%%%%%%%%%%%%%%%%%%%%%%%%%%%%%%%%%%%%%%%%%%%%%%%
\begin{prop}
\label{2DlowerbndF}
Fix $T>0$.  
\begin{itemize}
\item[(i)]If we define $\tilde{\Theta}_F(t)$ by
\begin{equation*}
\tilde{\Theta}_F(t) :=  \sup_{\substack {(x_0,\xi_0,b_0)\in \mathcal{A}\\x_0 \notin \supp \nabla \omega}} |b(x_0, \xi_0, b_0; t)|.
\end{equation*} 
Then for any $\varepsilon >0$ and $t\in [0,T]$ we have
\begin{equation*}
\|G_{\varepsilon}^s(t)\|_{\mathcal{F}} + O(\sqrt{\varepsilon}) \geq \tilde{\Theta}_F(t),
\end{equation*}
where the constant in O is uniform for $t\in[0,T]$.
\item[(ii)]If we define $\overline{\Theta}_F(t)$ by
\begin{equation*}
\overline{\Theta}_F(t) :=  \sup_{\substack {\{x_0\in \T^2 |\ |\nabla\omega(x_0)|>0\}\\|b_0|=1\\
    b_0\perp \nabla\omega(x_0)}} |b(x_0, \nabla\omega(x_0), b_0; t)|.
\end{equation*}   Then for any $\varepsilon >0$ and $t\in[0,T]$ we have
\begin{equation*}
\|G_{\varepsilon}^s(t)\|_{\mathcal{F}} + O(\sqrt{\varepsilon}) \geq \overline{\Theta}_F(t),
\end{equation*}
where $\|\cdot\|_{\mathcal{F}}$ is the seminorm from Definition \ref{F-norm} and the constant in O is uniform for $t\in[0,T]$.
\end{itemize}
\end{prop}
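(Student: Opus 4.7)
The plan is to mirror the proof of Proposition \ref{lowerbnd}: for each part, construct a family of fast-oscillating test fields that lie approximately in $\text{Ker}B$, compute the leading term of their image under $\opeps[a_0]\circ\mathfrak{g}_u(t)$ via Lemma \ref{imagephilemma}, and show that this image is also approximately in $\text{Ker}B$. Once both the test field and its image are $L^2_{sol}$-close to $\text{Ker}B$, the $\mathcal{F}$-seminorm coincides with the $L^2_{sol}$-norm on each, via the orthogonal decomposition $L^2_{sol} = \overline{\text{Im}B}\oplus\text{Ker}B$ used in Remark \ref{F-normrmk}. Measure-preservation of $g^{-t}$ together with degree-zero homogeneity of $b$ in $\xi_0$ recovers the claimed Lyapunov-type quantity after appropriate limits, and the $O(\sqrt{\varepsilon})$ error is contributed by the symbol-calculus comparison (\ref{Fnormestimate}).

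For part (i), I would pick $x_0\notin\supp\nabla\omega$, $\xi_0\in\Z^2\setminus\{0\}$, and $h_0\in C^\infty(\T^2)$ supported near $0$ small enough that $\supp h_0(\cdot-x_0)$ is disjoint from $\supp\nabla\omega$. The field $\phi_\delta$ of (\ref{phidelta}) lies in $\text{Ker}B$, because $Tv = v\cdot\nabla\omega$ vanishes on its support. Since $u\cdot\nabla\omega = 0$ for steady 2D Euler, $\omega\circ g^t = \omega$, whence $\nabla\omega(g^tx) = (Dg^t(x))^{-T}\nabla\omega(x)$ and $\supp\nabla\omega$ is $g^t$-invariant. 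The leading term of $\opeps[a_0]\mathfrak{g}_u(t)\phi_\delta$ from Lemma \ref{imagephilemma}, supported in $g^t(\supp h_0)$, therefore also lies in $\text{Ker}B$; taking $\delta\to 0$, the ratio of $L^2$-norms of image over preimage converges to $|b(x_0,\xi_0,\xi_0^\perp;t)|/|\xi_0^\perp|$. Density of $\Z^2\setminus\{0\}$ in $\R^2\setminus\{0\}$ combined with homogeneity, and the fact that in 2D only $\pm\xi_0^\perp/|\xi_0|$ are unit vectors perpendicular to $\xi_0$, together deliver $\tilde{\Theta}_F(t)$.

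For part (ii), I would fix $x_0$ with $\nabla\omega(x_0)\neq 0$, set $\xi_0 := \nabla\omega(x_0)$, and use the localized field $\phi_{\zeta,\delta}$ since $\xi_0$ need not be integral (Remark \ref{zetarmk}). Because $\xi_0\parallel\nabla\omega(x_0)$, Lemma \ref{phideltainKer} gives $\|\phi_{\zeta,\delta}\|_F = \|\phi_{\zeta,\delta}\|_{L^2} + O(\zeta) + O(\delta)$. The crucial 2D observation is that $\nabla\omega$ undergoes the same cotangent transport $\nabla\omega(g^sx_0) = (Dg^s(x_0))^{-T}\nabla\omega(x_0)$ as the $\xi$-equation of (BAS); hence starting (BAS) at $x_0$ with $\xi_0 = \nabla\omega(x_0)$ yields $\xi(s) = \nabla\omega(g^sx_0)$ for all $s$, and by (\ref{bperpxi}) the amplitude satisfies $b(x_0,\xi_0,\xi_0^\perp;s)\perp\nabla\omega(g^sx_0)$. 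Applied to Lemma \ref{imagephilemma}, the leading amplitude $b(g^{-t}x,\xi_0,\xi_0^\perp;t)$ at $x$ near $g^tx_0$ is perpendicular to $\nabla\omega(x)$ up to an $O(\zeta)$ error from $|g^{-t}x - x_0| = O(\zeta)$ on $\supp(h_\zeta\circ g^{-t})$. Replacing this amplitude by its projection onto $\nabla\omega(x)^\perp$, mimicking the $\eta(x)$-correction from the proof of Lemma \ref{phideltainKer}, yields a vector field exactly in $\text{Ker}T = \text{Ker}B$ at $L^2$-cost $O(\zeta)\|h_\zeta\|_{L^2}$. Dividing by $\|\phi_{\zeta,\delta}\|_{L^2}\sim\|h_\zeta\|_{L^2}$, sending $\delta\to 0$ then $\zeta\to 0$, and taking the supremum over $x_0$ delivers $\overline{\Theta}_F(t)$.

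The main subtlety is the $\zeta$-bookkeeping in part (ii): one must simultaneously exploit Lipschitz regularity of $\nabla\omega$, continuity of $b(\cdot,\xi_0,\xi_0^\perp;t)$ in its spatial argument, and the 2D conservation law $\xi(t) = \nabla\omega(g^tx_0)$, keeping all error terms at strictly lower order than $\|\phi_{\zeta,\delta}\|_{L^2}$ so that the image $\mathcal{F}$-seminorm agrees with its $L^2$-norm up to $O(\zeta) + O(\delta)$, small enough to be absorbed in the final limit.
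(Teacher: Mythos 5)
Your proposal is correct and follows essentially the same route as the paper: part (i) uses $\phi_\delta$ supported off $\supp\nabla\omega$ together with the flow-invariance of that support to keep both the test field and its image in $\text{Ker}B$, and part (ii) uses the localized $\phi_{\zeta,\delta}$ with $\xi_0$ parallel to $\nabla\omega(x_0)$, the cotangent transport $\xi(s)=\nabla\omega(g^sx_0)$ combined with the conservation law (\ref{bperpxi}), and the $\eta$-type projection of the amplitude onto $\nabla\omega^\perp$ with the same Lipschitz bookkeeping. The order of limits ($\delta\to 0$ then $\zeta\to 0$) and the final $O(\sqrt{\varepsilon})$ comparison via (\ref{Fnormestimate}) also match the paper's argument.
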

%%%%%%%%%%%%%%%%%%%%%%%%%%%%%%%%%%%%%%%%%%%%%%%%%%%%%%%%%%%%%%%%%%%%%%%%%%%%%%%%%%%%%%%%%%%%%%%%%%%
The proofs of these propositions are very similar to the proof of Proposition \ref{lowerbnd}.  First we approximate the evolution of our general 2-dimensional fast oscillating perturbations.  Consider the vector field $\phi_{\delta}\in C^{\infty}(\T^2)$ defined by
\begin{equation}
\label{2Dphidelta}
\phi_{\delta}(x):= \delta \nabla^{\perp}(h_0(x)e^{i\xi_0\cdot x/\delta}),
\end{equation}
where $\delta^{-1} \in \Z_+, \delta < 1, \xi_0 \in \Z^2$ and $h_0 \in C^\infty(\T^2)$ is an arbitrary smooth scalar function.  If we consider $\phi_{\delta}$ as a 3-dimensional planar vector field on $\T^3$, then
\begin{equation}
\label{3Dphi}
 \phi_{\delta} = \delta \nabla \times \left(\frac{i\xi_0 \times \xi_0^{\perp}}{|\xi_0|^2}h_0(x)e^{ix\cdot\xi_0/\delta}\right).
\end{equation}
Thus, by Lemma \ref{imagepsilemma} and Remark \ref{Atobremark} from Section \ref{3Dclassifysection} we have
\begin{equation}
\|\text{op}_{\varepsilon}[a_0]\circ\mathfrak{g}_u(t) \phi_{\delta}\|_{L^2} 
\label{imageofphi}=\ \|h_0(g^{-t}\cdot)b(\cdot,(g^{-t}_*(\cdot))^*\xi_0, \xi_0^{\perp},t )\|_{L^2} + O(\delta).
\end{equation}
We also remark that from the proof of Proposition \ref{lowerbnd} we have the 2-dimensional estimate:
\begin{equation}
\label{GstoG}
\|G_\varepsilon^s(t) - \opeps[a_0] \circ \mathfrak{g}_u(t)\|_{\mathcal{L}(L^2)}
=  O(\sqrt{\varepsilon}).
\end{equation}
\begin{proof}[Proof of Proposition \ref{2DlowerbndIm}]
Let $x_0\in \T^2$, $\xi_0\in \Z^2$ such that $(\xi_0^{\perp}, \nabla \omega(x_0)) \neq 0$.  We can choose $h_0\in C^{\infty}(\T^2)$ supported such that there is some constant $c_0$ where $|(\xi_0^{\perp}, \nabla \omega (x))| > c_0$ for all $x \in \supp (h_0)$.  We will call any function $h_0$ that satisfies these properties, \textit{localized at $x_0$}.  For $\delta^{-1}\in \Z_+$, let $\phi_{\delta}:= -i\delta\nabla^{\perp}(h_0e^{ix\cdot\xi_0/\delta})$.  Then from Lemma \ref{phideltainIm}, $\phi_{\delta}$ is approximately in the image of $B$.  More specifically, there is some remainder $r_{\delta}$ such that  $\|r_{\delta}\|_{L^2_{sol}} = O(\delta)$ and $\phi_{\delta} + r_{\delta} \in \overline{\text{Im}B}$.
Take the limit of the estimate \ref{imageofphi} as $\delta \to 0$, to get
\begin{equation*}
\|\text{op}_\varepsilon[a_0] \circ \mathfrak{g}_u(t)\|_{\mathcal{L}(\overline{\text{Im}B},L^2)}
\geq \sup_{\substack{x_0\in\T^2, \xi_0\in \Z^2\\(\xi_0^{\perp}, \nabla \omega(x_0)) \neq 0\\h_0 \text{ localized at }x_0 }}\frac{\|h_0(g^{-t}\cdot)b(g^{-t}\cdot, \xi_0, \xi_0^{\perp};t)\|_{L^2_{sol}}}{\|h_0\xi^{\perp}_0\|_{L^2_{sol}}}.
\end{equation*}
Then by an argument similar to that for line (\ref{limitA}), we have
\begin{equation}
\label{superone}
\|\text{op}_\varepsilon[a_0] \circ \mathfrak{g}_u(t)\|_{\mathcal{L}(\overline{\text{Im}B},L^2)} \geq  \sup_{\substack{(x_0,\xi_0,b_0) \in \mathcal{A}\\ (\xi_0^{\perp},\nabla \omega(x_0)) \neq 0}}
|b(x_0,\xi_0,b_0;t)|.
\end{equation}
Take the closure of the condition $(\xi_0^{\perp},\nabla \omega(x_0))\neq 0$ on the supremum in line (\ref{superone}) and, since $b(x_0,\xi_0,\xi_0^{\perp};t)$ depends continuously on the initial conditions, we have
\begin{equation*}
\|\text{op}_\varepsilon[a_0] \circ \mathfrak{g}_u(t)\|_{\mathcal{L}(\overline{\text{Im}B}, L^2_{sol})}
\geq \sup_{\substack{(x_0,\xi_0, b_0) \in \mathcal{A} \\ x_0\in \supp \nabla\omega}}
|b(x_0,\xi_0,b_0;t)| =:\Theta_*(t).
\end{equation*}
Hence, from (\ref{GstoG}), we have $\|G_\varepsilon^s(t)\|_{\mathcal{L}(\overline{\text{Im}B}, L^2_{sol})} + O(\sqrt{\varepsilon}) \geq \Theta_*(t)$.  This concludes the proof of Proposition \ref{2DlowerbndIm}.
\end{proof}
%%%%%%%%%%%%%%%%%%%%%%%%%%%%%%%%

\begin{proof}[Proof of Proposition \ref{2DlowerbndF} (i)]
Let $h_0\in C^{\infty}$ such that $\nabla\omega(x) = 0$ for any $x\in\supp h_0$.  Now let $\delta^{-1}\in \Z_+$ and choose any $\xi_0\in \Z^2$ and consider the resulting fast oscillating vector field, $\phi_{\delta}:= -i\delta\nabla^{\perp}(h_0e^{ix\cdot\xi_0/\delta})$.  Just as in the proof of Lemma \ref{phideltainKer} we consider the operator $T = \curl B$ defined by
\begin{equation*}
 Tv:= v\cdot \nabla\omega \hspace{.5 cm}v \in (C^{\infty}(\T^2))^2.
\end{equation*}
Clearly, $\phi_{\delta}\in \text{Ker}T = \text{Ker}B$.  Hence, recalling the expansion from line (\ref{phiexpanded}) we have
\begin{equation}
\label{Fnormphi}
 \|\phi_{\delta}\|_F = \|\phi_{\delta}\|_{L^2} = \|h_0\xi_0^{\perp}\|_{L^2} + O(\delta).
\end{equation}
The vector field $\nabla\omega$ evolves like a covector along the flow $g^t$ and we have
\begin{equation}
 \label{gradvortevolution}
\nabla \omega(g^tx_0) = (g^{-t}_*(x_0))^*\nabla \omega(x_0).
\end{equation}   
It follows that $\nabla\omega \equiv 0$ on $\supp (h_0\circ g^{-t})$ and
\begin{equation*}
h_0(g^{-t}x)b(g^{-t}x, \xi_0, \xi_0^{\perp};t)e^{ig^{-t}x\cdot\xi_0/\delta} \in \text{Ker}T.
\end{equation*}
Hence, from the estimate (\ref{imageofphi}) we have
 \begin{align}
\nonumber
\|\opeps[a_0]\circ \mathfrak{g}^t_u \phi_{\delta}\|_F =& \|h_0(g^{-t}x)b(g^{-t}x, \xi_0, \xi_0^{\perp};t)e^{ig^{-t}x\cdot\xi_0/\delta}\|_F + O(\delta)\\
\label{Fnormimage}
&= \|h_0(g^{-t}x)b(g^{-t}x, \xi_0, \xi_0^{\perp};t)e^{ig^{-t}x\cdot\xi_0/\delta}\|_{L^2} + O(\delta).
\end{align}
Consider (\ref{Fnormphi}) and (\ref{Fnormimage}) and take the limit as $\delta \to 0$ to estimate a lower bound for the $\mathcal{F}-$seminorm of $\opeps[a_0]\circ \mathfrak{g}^t_u$:
\begin{equation*}
 \|\opeps[a_0]\circ \mathfrak{g}^t_u \|_{\mathcal{F}} \geq \sup_{\substack{\xi\in\Z^2, x\in\T^2\\ \supp(h_0)\subset \{x: \nabla\omega(x)=0\}}}\frac{\|h_0(g^{-t}x)b(g^{-t}x, \xi_0, \xi_0^{\perp};t)e^{ig^{-t}x\cdot\xi_0/\delta}\|_{L^2}}{\|h_0\xi_0^{\perp}\|_{L^2}}.
\end{equation*}
Again we use an argument similar to that for line (\ref{limitA}) to simplify the supremum on the RHS.  Here we must also note that if $\supp (h_0)\subset \{x: \nabla\omega(x)=0\}$, then $\supp (h_0\circ g^{-t})\subset \{x: \nabla\omega(x)=0\}$, so we may take the supremum over $x_0 = g^{-t}x \in \T^2 \setminus \supp\nabla\omega$ to get
\begin{equation*}
\|\opeps[a_0]\circ \mathfrak{g}^t_u \|_{\mathcal{F}} \geq \sup_{\substack{(x_0,\xi_0,b_0) \in \mathcal{A}\\ x_0 \notin \supp \nabla\omega(x_0)}}|b(x_0, \xi_0, b_0;t)|=:\tilde{\Theta}_F(t).
\end{equation*}
Therefore, from the estimate (\ref{Fnormestimate}) we have
\begin{equation*}
 \|G_{\varepsilon}^s(t)\|_{\mathcal{F}} + O(\sqrt{\varepsilon}) \geq \tilde{\Theta}_F(t).
\end{equation*}

\end{proof}
%%%%%%%%%%%%%%%%%%%%%%%%%%%%%

\begin{proof}[Proof of Proposition \ref{2DlowerbndF} (ii)]
Let $x_0 \in \T^n$ such that $\nabla\omega(x_0)\neq 0$ and define $\xi_0 := \frac{\nabla\omega(x_0)}{|\nabla\omega(x_0)|}$.  Let $h_0\in C^{\infty}(\T^n)$ be supported on $B_1(0)$, the ball of radius $1$ centered at $0$ such that $h_0(0)=1$.  For $0<\zeta<<1$ define $h_{\zeta}$ by
\begin{equation*}
h_{\zeta}(x):= h_0\bigl(\frac{x-x_0}{\zeta}\bigr)
\end{equation*}
and let $\delta^{-1}\in \Z_+$.  For any $x \in [0,1)\times [0,1)$ define
\begin{equation*}
\phi_{\zeta,\delta}(x):= -i\delta \nabla^{\perp}(h_{\zeta}(x)e^{ix\cdot\xi_0/\delta}),
\end{equation*}
and extend $\phi_{\zeta,\delta}$ periodically.
It follows from Lemma \ref{phideltainKer} and the expansion (\ref{phiexpanded}) of $\phi_{\zeta,\delta}$ that
\begin{equation}
\label{Fnormphizeta}
\|\phi_{\zeta,\delta}\|_F = \|\phi_{\zeta,\delta}\|_{L^2} + O(\zeta) + O(\delta) = \|h_{\zeta}\xi_0^{\perp}\|_{L^2} + O(\zeta) + O(\delta).
\end{equation}
Now we must estimate $\|\opeps[a_0]\circ \mathfrak{g}^t_u \phi_{\zeta,\delta}\|_F$ for our fixed time $t>0$.  The approach is similar to the proof of Lemma \ref{phideltainKer}.  Here we we will also use that for the operator $T$ defined by $Tv = \curl Bv = v\cdot \nabla\omega$, we have $\text{Ker}B = \text{Ker}T$.  Lemma \ref{imagephilemma} gives that
\begin{equation}
\label{step3}
 \bigl(\opeps[a_0]\circ \mathfrak{g}^t_u \phi_{\zeta,\delta}\bigr)(x) = h_{\zeta}(g^{-t}x)b(g^{-t}x,\xi_0,\xi_0^{\perp};t)e^{ig^{-t}x\cdot\xi_0/\delta} + r_{\delta}(x),
\end{equation}
where $\|r_{\delta}\|_{L^2} = O(\delta)$.  

Let $y\in \supp (h_{\zeta}\circ g^{-t})$ and $b(y):=b(g^{-t}y,\xi_0, \xi_0^{\perp};t)$  (notice that the parameters $\xi_0$ and $t$ are fixed).
 Then we have
\begin{equation*}
 b(y) = b(y) - \frac{(b(y),\nabla\omega(y))}{|\nabla\omega(y)|^2}\nabla\omega(y) + \frac{(b(y),\nabla\omega(y))}{|\nabla\omega(y)|^2}\nabla\omega(y).
\end{equation*}

We will now demonstrate that 
\begin{equation*}
\frac{(b(y),\nabla\omega(y))}{|\nabla\omega(y)|} = O(\zeta).  
\end{equation*}
Let $y_0:= g^{-t}y$, hence $y_0\in \supp(h_{\zeta}) \subset B_{\zeta}(x_0)$.   Then we may estimate 
\begin{align}
\nonumber
 |(b(y),\nabla\omega(y))| \leq& |(b(y), \nabla\omega(g^{t}y_0)) -(b(y),\nabla\omega(g^tx_0))| + |(b(y),\nabla\omega(g^tx_0))| \\
 \label{step1} \leq&\ \zeta K\|b(\cdot)\|_{L^{\infty}(\T^2)} + |(b(y),\nabla\omega(g^tx_0))|,
\end{align}
where $K$ is the Lipschitz norm of $\nabla\omega \circ g^t$ on $\T^2$.  Recall that we defined $\xi_0 := \frac{\nabla\omega(x_0)}{|\nabla\omega(x_0)|}$, so from the construction of (BAS), see equation (\ref{bperpxi}), we have
\begin{equation*}
(b(g^tx_0),\nabla\omega(g^tx_0)) = (b(x_0,\xi_0,\xi_0^{\perp};t),(g^{-t}_*(x_0))^*\xi_0 ) = 0.
 \end{equation*}
It follows that
\begin{equation}
\label{step2} |(b(y),\nabla\omega(g^tx_0))| = |(b(g^t y_0),\nabla\omega(g^tx_0))- (b(g^tx_0),\nabla\omega(g^tx_0))|
 \leq \zeta L\|\nabla\omega\|_{L^{\infty}(\T^2)},
\end{equation}
where $L$ is the Lipschitz norm of the function $x \to b(g^tx)$.  We may assume $\zeta << 1$, which implies $|\nabla\omega(y)| \geq |\nabla\omega(g^tx_0)| - \zeta K >0$.  Thus from (\ref{step1}) and (\ref{step2}) we have
\begin{equation*}
 \frac{|(b(y),\nabla\omega(y))|}{|\nabla\omega(y)|} \leq\ \frac{\zeta K\|b(\cdot)\|_{L^{\infty}(\T^2)} + \zeta L\|\nabla\omega\|_{L^{\infty}(\T^2)}}{|\nabla\omega(g^tx_0)|-\zeta K} = O(\zeta),
\end{equation*}
where $O(\zeta)$ is uniform in $x$ and independent of $\delta$.

For any $x\in \supp (h_{\zeta}\circ g^{-t})$ we define $\eta(x)$ by
\begin{equation*}
\eta(y):= b(x) - \frac{(b(x),\nabla\omega(x))}{|\nabla\omega(x)|},
\end{equation*}
Then $h_{\zeta}(g^{-t}\cdot)\eta(\cdot) \in C^{\infty}(\T^n)$ and
\begin{equation*}
h_{\zeta}(g^{-t} x)b(x) = h_{\zeta}\circ g^{-t}\eta(x) + O(\zeta)\frac{h_{\zeta}(g^{-t} x)\nabla\omega(x)}{|\nabla\omega(x)|},
\end{equation*}
where $O(\zeta)$ is uniform in $x$ and is independent of $\delta$.  From the definition of $\eta$, it is clear that
\begin{equation*}
 T\bigl(h_{\zeta}(g^{-t}\cdot)\eta e^{ig^{-t}(\cdot)\cdot\xi_0}\bigr)(x) = h_{\zeta}(g^{-t}x)e^{ig^{-t}x\cdot\xi_0}\bigl(\eta\cdot\nabla\omega\bigr)(x) \equiv 0.
\end{equation*}  
Hence $h_{\zeta}(g^{-t}\cdot)\eta e^{ig^{-t}(\cdot)\cdot\xi_0}\in \text{Ker}B$ and, because $\|h_{\zeta}\|_{L^2} = \zeta\|h_0\|_{L^2}$ on $\T^2$, we have
\begin{equation*}
\|h_{\zeta}(g^{-t} \cdot)b e^{ig^{-t}(\cdot)\cdot\xi_0/\delta}\|_F
 =\ \|h_{\zeta}(g^{-t} \cdot)b e^{ig^{-t}(\cdot)\cdot\xi_0/\delta}\|_{L^2} + O(\zeta^2)
\end{equation*}
Then from (\ref{step3}) we have
\begin{equation}
\label{opphiF}
 \|\opeps[a_0]\circ \mathfrak{g}^t_u \phi_{\zeta,\delta}\|_{F} 
=\ \|\opeps[a_0]\circ \mathfrak{g}^t_u \phi_{\zeta, \delta}\|_{L^2} + O(\zeta^2) + O(\delta),
\end{equation} 
where the $O(\zeta^2)$ does not depend on $\delta$.  

Consider the quotient (\ref{opphiF}) over (\ref{Fnormphizeta}) and take the limit as $\delta \to 0$ to get,
\begin{equation}
\label{fatmess}
\|\text{op}_\varepsilon[a_0] \circ \mathfrak{g}_u(t)\|_{\mathcal{F}} + O(\zeta^2)
\geq \sup_{\substack{x,\ x_0\in\T^2\\|\nabla\omega(x_0)|>0\\ \xi_0 = \nabla\omega(x_0)/|\nabla\omega(x_0)|}}
\frac{\|h_{\zeta}(g^{-t}x)b(g^{-t}x,\xi_0,\xi_0^{\perp};t)\|_{L^2}} {\|h_{\zeta}\xi_0^{\perp}\|_{L^2}}.
\end{equation}
For any value of $0<\zeta<1$, $h_{\zeta}(x_0) = 1$, so for fixed $|\xi_0|=1$ we have
\begin{equation*}
\lim_{\zeta\to 0}\frac{\|h_{\zeta}(g^{-t}x)b(g^{-t}x,\xi_0,\xi_0^{\perp};t)\|_{L^2}} {\|h_{\zeta}\xi_0^{\perp}\|_{L^2}} = |b(x_0, \xi_0,\xi_0^{\perp};t)|.
\end{equation*}
Hence, we can take the limit as $\zeta\to 0$ of (\ref{fatmess}) (and use the fact that $b$ is homogeneous of degree $0$ in $\xi_0$) to get
\begin{equation*}
\|\text{op}_\varepsilon[a_0] \circ \mathfrak{g}_u(t)\|_{\mathcal{F}} \geq \sup_{\substack{|\nabla\omega(x_0)|>0,\ |b_0|=1\\
    b_0\perp \nabla\omega(x_0)}}|b(x_0,\nabla\omega(x_0),b_0;t)| =: \overline{\Theta}_F(t).
\end{equation*}
From (\ref{GstoG}) we have
\begin{equation*}
\|G_\varepsilon^s(t) - \opeps[a_0] \circ \mathfrak{g}_u(t) \|_{\mathcal{F}} = O(\sqrt{\varepsilon}).
\end{equation*}
Therefore, $\|G_\varepsilon^s(t)\|_{\mathcal{F}} + O(\sqrt{\varepsilon}) \geq \overline{\Theta}_F(t)$.
\end{proof}
%%%%%%%%%%%%%%%%%%%%%%%%%%%%%%%%%%%%%%%%%%%%%%%%%%%%%%%%%%%%%%%%%%%%%%%%%%%%%%%%%%%%%%%%%%

\begin{definition}
Let $\Theta_F(t):= \max\{\tilde{\Theta}_F(t), \overline{\Theta}_F(t)\}$ and define $\mu_{2*}, \mu_{2F} \in\R$ by
\begin{align*}
\mu_{2*} &= lim_{t\to\infty}\frac{1}{t}\log\Theta_*(t),\\
\mu_{2F} &= lim_{t\to\infty}\frac{1}{t}\log \Theta_F(t).
\end{align*} The existence of both limits follows from Remark \ref{limitexist}.
\end{definition} 

%%%%%%%%%%%%%%%%%%%%%%%%%%%%%%%%%%%%%%%%%%%%%%%%%%%%%%%%%%%%%%%%%%%%%%%%%%%%%%%%%%%%%%%%%%
\begin{theorem}
\label{2Dmainthm}
For 2-dimensional flows, we have the following lower bound for the essential spectral radius of our evolution operator restricted to $\overline{\text{Im}B}$:
\begin{equation*}
e^{\mu_{2*}t} \leq r_{ess}(G(t)|_{\overline{\text{Im}B}}).
\end{equation*}
And for 2-dimensional flows we have another lower bound for the essential spectral radius of the evolution operator acting on the factor space:
\begin{equation*}
e^{\mu_{2F}t} \leq r_{ess}(G_F(t)),
\end{equation*}
where $G_F(t)$ denotes $G(t)$ on the factor space.
\end{theorem}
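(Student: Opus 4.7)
The plan is to reduce Theorem \ref{2Dmainthm} to the propositions already proven in this section by running the exact argument used for Theorem \ref{3Dmainthm} (as anticipated by Remark \ref{mainthmrmk}). The two theorems differ only in \emph{which} lower-bound proposition we feed into the template: in 3D it was Proposition \ref{lowerbnd}, and in 2D we will use Propositions \ref{2DlowerbndIm} and \ref{2DlowerbndF}.

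For the image bound $e^{\mu_{2*}t} \leq r_{ess}(G(t)|_{\overline{\text{Im}B}})$: first I would fix $t>0$ and $N\in\N$ and take an arbitrary finite-rank $C\in\mathcal{L}(L^2)$. The chain of inequalities from the 3D proof carries over verbatim:
\begin{equation*}
\|G(Nt)+C\|_{\mathcal{L}(\overline{\text{Im}B},L^2)} \ \geq\ \|G(Nt)\circ\opeps[1-\chi(\xi/\sqrt{\varepsilon})]\|_{\mathcal{L}(\overline{\text{Im}B},L^2)} - o(1)
\end{equation*}
as $\varepsilon\to 0$, since $\opeps[1-\chi(\xi/\sqrt\varepsilon)]$ converges strongly to $0$ on each fixed vector. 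Replace the right-hand side by $\|G^s_\varepsilon(Nt)\|_{\mathcal{L}(\overline{\text{Im}B},L^2)}-O(\sqrt\varepsilon)$ using Theorem \ref{VishikThm}, then invoke Proposition \ref{2DlowerbndIm} to bound this below by $\Theta_*(Nt)-O(\sqrt\varepsilon)-o(1)$. Sending $\varepsilon\to 0$ and then taking the infimum over finite-rank $C$ (and by density over compact $C$) yields $\|G(Nt)|_{\overline{\text{Im}B}}\|_{\mathcal{K}}\geq \Theta_*(Nt)$. Taking the $N$-th root, using $\Theta_*(Nt)^{1/N}=e^{t\cdot (\log\Theta_*(Nt))/(Nt)}$, and applying Nussbaum's Theorem \ref{Nussbaum} as $N\to\infty$ gives $r_{ess}(G(t)|_{\overline{\text{Im}B}})\geq e^{\mu_{2*}t}$.

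For the factor-space bound $e^{\mu_{2F}t}\leq r_{ess}(G_F(t))$: I would repeat the second half of the proof of Theorem \ref{3Dmainthm}, working with the $\mathcal{F}$-seminorm from Definition \ref{F-norm} and lifting finite-rank operators on $F$ via (\ref{Kbar}). The analogous chain yields, for any finite-rank $C\in\mathcal{L}(F)$,
\begin{equation*}
\|G_F(Nt)+C\|_{\mathcal{L}(F)}\ \geq\ \|G^s_\varepsilon(Nt)\|_{\mathcal{F}}-O(\sqrt\varepsilon)-o(1).
\end{equation*}
Now Proposition \ref{2DlowerbndF}(i) gives $\|G^s_\varepsilon(Nt)\|_{\mathcal{F}}+O(\sqrt\varepsilon)\geq \tilde\Theta_F(Nt)$, while Proposition \ref{2DlowerbndF}(ii) gives the analogous bound with $\overline\Theta_F(Nt)$. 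Combining these two estimates with the same $\varepsilon\to 0$, infimum over $C$, $N$-th root and Nussbaum argument produces $\|G_F(Nt)\|_{\mathcal{K}(F)}\geq \max\{\tilde\Theta_F(Nt),\overline\Theta_F(Nt)\}=\Theta_F(Nt)$, from which $r_{ess}(G_F(t))\geq e^{\mu_{2F}t}$ follows immediately.

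All of the real work—building the fast-oscillating test vector fields close to $\overline{\text{Im}B}$ or $\text{Ker}B$ and computing their images under $G^s_\varepsilon(t)$—has already been absorbed into Propositions \ref{2DlowerbndIm} and \ref{2DlowerbndF}, so there is no genuinely new obstacle at this step; the only thing to keep straight is the bookkeeping between the two seminorms $\|\cdot\|_{\mathcal{L}(L^2)}$ and $\|\cdot\|_{\mathcal{F}}$ (together with the lifting $K\mapsto\overline K$), and the fact that taking the maximum of the two factor-space Lyapunov quantities is compatible with taking logs, $N$-th roots, and the $N\to\infty$ limit. Accordingly, rather than rewrite the identical chain of estimates, I would simply say: ``The proof proceeds exactly as in Theorem \ref{3Dmainthm}, using Proposition \ref{2DlowerbndIm} in place of Proposition \ref{lowerbnd}(i) and Proposition \ref{2DlowerbndF} in place of Proposition \ref{lowerbnd}(ii).''
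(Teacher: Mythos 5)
Your proposal is correct and follows exactly the route the paper takes: cite the proof of Theorem \ref{3Dmainthm}, substituting Proposition \ref{2DlowerbndIm} for the image bound and combining the two parts of Proposition \ref{2DlowerbndF} into the single estimate $\|G_{\varepsilon}^s(t)\|_{\mathcal{F}} + O(\sqrt{\varepsilon}) \geq \Theta_F(t)$ with $\Theta_F(t) = \max\{\tilde{\Theta}_F(t), \overline{\Theta}_F(t)\}$ for the factor space. The explicit chain of inequalities you reproduce is precisely the one from the 3D proof, so there is nothing to add.
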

%%%%%%%%%%%%%%%%%%%%%%%%%%%%%%%%%%%%%%%%%%%%%%%%%%%%%%%%%%%%%%%%%%%%%%%%%%%%%%%%%%%%%%%%%

\begin{proof}
 The proof for Theorem \ref{2Dmainthm} is the same as that for Theorem \ref{3Dmainthm} except that we will use the 2-dimensional propositions from the current section instead of Proposition \ref{lowerbnd} (see Remark \ref{mainthmrmk} following the proof of Theorem \ref{3Dmainthm}).  To prove $e^{\mu_{2*}t} \leq r_{ess}(G(t)|_{\overline{\text{Im}B}})$ replace Proposition \ref{lowerbnd} with Proposition \ref{2DlowerbndIm} in the proof of Theorem \ref{3Dmainthm} for $\overline{\text{Im}B}$.  For the factor space estimate, notice that Proposition \ref{2DlowerbndF} implies
\begin{equation}
\label{fsestimate}
\|G_{\varepsilon}^s(t)\|_{\mathcal{F}} + O(\sqrt{\varepsilon}) \geq \Theta_F(t),
\end{equation}
 where $\Theta_F(t) := \max\{\tilde{\Theta}_F(t), \overline{\Theta}_F(t)\}$.  To prove $e^{\mu_{2F}t} \leq r_{ess}(G_F(t))$, replace Proposition \ref{lowerbnd} with the estimate (\ref{fsestimate}) above in the proof of Theorem \ref{3Dmainthm} for the factor space.
\end{proof}

%%%%%%%%%%%%%%%%%%%%%%%%%%%%%%%%%%%%%%%%%%%%%%%%%%%%%%%%%%%%%%%%%%%%%%%%%%%%%%%%%%
\begin{corollary}For flows in 2D
 \begin{equation*}
r_{ess}(G(t)) = \max\{r_{ess}(G_F(t)), r_{ess}(G(t)\mid_{\overline{\text{Im}B}})\}.
\end{equation*}
\end{corollary}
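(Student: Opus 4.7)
The plan is to mirror the proof of Corollary \ref{firstcorollary} from the 3-dimensional setting, using the 2D Theorem \ref{2Dmainthm} and the 2D versions $\Theta_*(t), \tilde\Theta_F(t), \overline\Theta_F(t)$ in place of $\Theta_*, \Theta_F$. The strategy splits into an upper bound for $r_{ess}(G(t))$ obtained from the Lyapunov-type exponent $\mu$ of Vishik's theorem, and matching lower bounds from Proposition \ref{ressprop} and an analogue for $G(t)|_{\overline{\text{Im}B}}$.

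First I would show that the full Vishik exponent equals $\max\{\mu_{2*},\mu_{2F}\}$. To see this, I partition $\mathcal{A}$ according to whether $x_0\in \supp(\nabla\omega)$ or not, which immediately gives
\begin{equation*}
\sup_{(x_0,\xi_0,b_0)\in\mathcal{A}}|b(x_0,\xi_0,b_0;t)| \;=\; \max\{\Theta_*(t),\tilde\Theta_F(t)\}.
\end{equation*}
On the other hand, the set over which $\overline\Theta_F(t)$ is taken sits inside $\mathcal{A}$ (via $\xi_0=\nabla\omega(x_0)/|\nabla\omega(x_0)|$, using that $b$ is homogeneous of degree $0$ in $\xi_0$), so $\overline\Theta_F(t)\le \sup_{\mathcal{A}}|b|$. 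Combining the two observations yields $\max\{\Theta_*(t),\Theta_F(t)\}=\sup_{\mathcal{A}}|b(x_0,\xi_0,b_0;t)|$. Taking $\tfrac1t\log$ and $t\to\infty$ gives $\mu = \max\{\mu_{2*},\mu_{2F}\}$, using Remark \ref{limitexist} to ensure all limits exist.

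Next I apply Vishik's Theorem \ref{vishik} and Theorem \ref{2Dmainthm}:
\begin{equation*}
r_{ess}(G(t)) \;=\; e^{\mu t} \;=\; \max\bigl\{e^{\mu_{2*}t},\,e^{\mu_{2F}t}\bigr\} \;\le\; \max\bigl\{r_{ess}(G(t)|_{\overline{\text{Im}B}}),\; r_{ess}(G_F(t))\bigr\}.
\end{equation*}
This is one half of the desired identity. For the reverse inequality, Proposition \ref{ressprop} (which was proved in 2D as well as 3D) gives $r_{ess}(G_F(t))\le r_{ess}(G(t))$. It remains to argue $r_{ess}(G(t)|_{\overline{\text{Im}B}})\le r_{ess}(G(t))$, which is not explicitly isolated as a proposition but follows by a Nussbaum-seminorm argument analogous to Proposition \ref{ressprop}: for any compact $K$ on $L^2_{sol}$, the restriction $K|_{\overline{\text{Im}B}}$ is compact on $\overline{\text{Im}B}$ (since $\overline{\text{Im}B}$ is closed and invariant under $G(t)$), so
\begin{equation*}
\|G(Nt)|_{\overline{\text{Im}B}}\|_{\mathcal{K}(\overline{\text{Im}B})} \;\le\; \|G(Nt)+K\|_{\mathcal{L}(L^2_{sol})},
\end{equation*}
and infimizing over $K$ plus Nussbaum's Theorem \ref{Nussbaum} yields $r_{ess}(G(t)|_{\overline{\text{Im}B}})\le r_{ess}(G(t))$.

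Putting both directions together gives
\begin{equation*}
r_{ess}(G(t)) \;=\; \max\bigl\{r_{ess}(G_F(t)),\; r_{ess}(G(t)|_{\overline{\text{Im}B}})\bigr\}.
\end{equation*}
The main obstacle is the reverse inequality $r_{ess}(G(t)|_{\overline{\text{Im}B}}) \leq r_{ess}(G(t))$; it is not packaged as a lemma in the text, but it is the natural companion to Proposition \ref{ressprop} and is handled by the same Nussbaum-seminorm bookkeeping, using that restrictions of compact operators to closed invariant subspaces remain compact.
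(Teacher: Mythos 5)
Your proof is correct and follows essentially the same route as the paper: Proposition \ref{ressprop} (plus the invariant-subspace version of the Nussbaum-seminorm bound) for one inequality, and the identity $\sup_{\mathcal{A}}|b| = \max\{\Theta_*(t),\tilde{\Theta}_F(t)\}$ combined with Theorems \ref{vishik} and \ref{2Dmainthm} for the other. You also correctly noticed that the paper's citation of Proposition \ref{ressprop} only literally covers the factor-space half of the lower bound, and your companion argument for $r_{ess}(G(t)\mid_{\overline{\text{Im}B}})\leq r_{ess}(G(t))$ is sound once one composes the compact operator with the orthogonal projection onto $\overline{\text{Im}B}$ so that it acts on that subspace.
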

%%%%%%%%%%%%%%%%%%%%%%%%%%%%%%%%%%%%%%%%%%%%%%%%%%%%%%%%%%%%%%%%%%%%%%%%%%%%%%%%%

\begin{proof}
By Proposition \ref{ressprop} we have
\begin{equation*}
\max\{r_{ess}(G_F(t)), r_{ess}(G(t)\mid_{\overline{\text{Im}B}})\}\leq r_{ess}(G(t)).
\end{equation*}
For the other inequality, notice
\begin{equation*}
\sup_{(x_0,\xi_0,b_0) \in \mathcal{A}}|b(x_0,\xi_0,b_0;t)| = \max\{\Theta_*(t), \tilde{\Theta}_F(t)\}.
\end{equation*}
Hence
\begin{equation}
\label{blarb}
\lim_{t\to\infty}\frac{1}{t} \log \sup_{(x_0,\xi_0,b_0) \in \mathcal{A}}|b(x_0,\xi_0,b_0;t)| \leq \lim_{t\to\infty}\frac{1}{t} \log \max \{\Theta_*(t), \tilde{\Theta}_F(t)\}.
\end{equation}
The LHS of (\ref{blarb}) is the Lyapunov-type exponent $\mu$ from Theorem \ref{vishik}, so we have
\begin{equation*}
r_{ess}(G(t)) = e^{\mu t} \leq \max\{e^{\mu_{2*}t}, e^{\mu_{2F}t}\} \leq \max\{r_{ess}(G_F(t)), r_{ess}(G(t)\mid_{\overline{\text{Im}B}})\}.
\end{equation*}
\end{proof}

\section{Example: Hyperbolic Stagnation Point}
\label{2Dhspsection}

A point $x_s \in \T^n$ is a hyperbolic stagnation point of the flow if $\pupx (x_s)$ does not have any purely imaginary eigenvalues.  In \cite{fv2} Friedlander and Vishik demonstrate that for any flow with a hyperbolic stagnation point, there is instability in the essential spectrum.  Moreover, any instability in the essential spectrum for a 2-dimensional flow is caused by a hyperbolic stagnation point, see \cite{fsv}.  Here we see that for two-dimensional flows where the hyperbolic stagnation point $x_s$ is in the support of the gradient of vorticity, this instability is caused by perturbations in $\overline{\text{Im}B}$ as well as by perturbations in the factor space.  At the end of this section we use this fact to demonstrate that 3-dimensional planar flows with a hyperbolic stagnation point also have instability in the factor space - regardless of whether or not the stagnation point is in $\supp(\omega)$.

Suppose the two-dimensional steady flow $u$ has a hyperbolic stagnation point, $x_s$ and that $x_s \in \supp \nabla\omega$.  In \cite{fv2}, the authors demonstrate that (BAS) has a simple solution at the hyperbolic stagnation point with the following argument.  A straightforward computation shows that $\pupx(x_s)$ is a symmetric matrix whenever $x_s$ is a hyperbolic stagnation point of an inviscid, incompressible flow - in 2 or 3 dimensions.  Since we are in 2-dimensional space, it follows that $\pupx (x_s)$ has two real eigenvalues and the divergence free condition gives us that the sum of these eigenvalues is $0$.  Let $\lambda$ and $-\lambda$ be the eigenvalues of $\pupx (x_s)$ associated with the eigenvectors $a_+$ and $a_-$, respectively.  Then at a hyperbolic stagnation point we always have a solution to (BAS) of this form:
\begin{align*}
&x(t)= x_s\\
&\xi(t) = a_+ e^{-\lambda t}\\
&b(t) = a_- e^{\lambda t}.
\end{align*}

From the definition of $\Theta_*(t)$ in Proposition \ref{2DlowerbndIm} we see that $\Theta_*(t) \geq e^{\lambda t}$.  Hence $\mu_{2*} \geq \lambda >0$ and we have exponential stretching of perturbations in $\overline{\text{Im}B}$.  

In order to use Proposition \ref{2DlowerbndF}  to demonstrate that there is exponential growth in the factor space, we must find a solution to (BAS) that is close enough to the solution at the hyperbolic stagnation point above to exhibit exponential stretching, and this solution must grow in the factor space norm.  Locally, two perpendicular flow lines pass through the hyperbolic stagnation point.  Along one, the stable flow line, the fluid moves towards the stagnation point.  Along the other, the unstable flow line, the fluid moves away from the stagnation point.   Choose a point $x_0$ on the stable flow line, so that $x(t)$ will flow into the stagnation point.  We remark that $\nabla \omega (x_0)$ moves like a covector along the flow, thus $\xi(t) = \nabla\omega(g^tx_0)$ satisfies the $\xi$-equation of (BAS).  Let $\xi_0 = \frac{\nabla\omega(x_0)}{|\nabla\omega(x_0)|}$ and let $b_0=\xi_0^{\perp}$.  The resulting solution to (BAS) flows into the hyperbolic stagnation point solution above in the sense that as $s \rightarrow \infty$
\begin{equation*}
g^sx_0 \rightarrow x_s,\hspace{.3 cm} \frac{\xi(s)}{|\xi(s)|} \rightarrow a_+ \hspace{.2 cm} \text{and}\hspace{.2 cm} \frac{b(s)}{|b(s)|}\rightarrow a_-.
 \end{equation*}
 Hence, the solution $b(x_0,\xi_0,b_0;t)$ approaches the solution $b(x_s,a_+,a_-;t)$ as we choose values for $x_0$ closer to $x_s$.  This implies that $\overline{\Theta_F}(t) \geq e^{\lambda t}$ and by Proposition \ref{2DlowerbndF} we have exponential stretching in the factor space.
 
 %See Figure \ref{stagnationpoint}

%\begin{figure}[h]
%{\psfrag{Ws}{$W^s_{loc}(x_s)$}
%       \psfrag{Wu}{$W^u_{loc}(x_s)$}
%       \psfrag{a+}{$a_+$}
%       \psfrag{a-}{$a_-$}
%       \psfrag{Es}{$E^s$}
%       \psfrag{Eu}{$E^u$}
%       \psfrag{x_s}{$x_s$}
%	\psfrag{x_0}{$x_0$}
%   	\psfrag{w}{$\overline{\nabla\omega}(x_0)$}
%\centerline{\includegraphics[width=20pc]{stablemanifold2.eps}}}
%\caption{Flow dynamics near a 2-dimensional hyperbolic stagnation point.}\label{stagnationpoint}
%\end{figure}

Finally, we use Theorem \ref{2Dmainthm} for a 3-dimensional planar flow to detect instability in the factor space that Theorem \ref{3Dmainthm} cannot detect.  Consider the planar 3-dimensional steady flow given by
\begin{equation*}
u_1(x):= \sin x_1 \cos x_2 \hspace{.5 cm} u_2(x):= -\cos x_1 \sin x_2 \hspace{.5 cm} u_3(x) = 0.
\end{equation*}
The point $x_s = (0,0,0)$ is a hyperbolic stagnation point since
\begin{equation*}
\pupx(x_s) = \left( \begin{array}{ccc}
1 & 0 & 0\\
0 & -1 & 0 \\
0 & 0 & 0 \end{array} \right).
\end{equation*}
Clearly, the eigenvalues of $\tfrac{\partial u}{\partial x}(x_s)$ are $\pm 1$ with corresponding eigenvectors in the first two coordinate directions.  This implies that we have some linear instability in the essential spectrum.  Theorem \ref{3Dmainthm} (and Corollary \ref{secondcorollary}) gives us that this instability corresponds to a perturbation in $\overline{\text{Im}B}$.  However, since $\supp(\omega) = \T^3$ in this example, we cannot compute $\mu_{3F}$ for this flow and Theorem \ref{3Dmainthm} tells us nothing about the factor space.   

For 3-dimensional planar flows we can use Theorem \ref{2Dmainthm} to compute a lower bound for $r_{ess}(G_F(t))$.  Notice that if $u$ is planar, then any vector field in $\overline{\text{Im}B}$ is co-planar.  It follows that the 2-dimensional factor space norm of an operator is less than or equal to the 3-dimensional factor space norm of the same operator.   Also, the essential spectrum of $G(t)$ in 3-dimensions contains the essential spectrum of $G(t)$ in 2-dimensions.  Thus, the essential spectral radius of $G_F(t)$ in 2-dimensions is less than or equal to the essential spectral radius of $G_F(t)$ in 3-dimensions and $e^{\mu_{2F}t} \leq r_{ess}(G_F(t))$ in 3-dimensions.  Since $\mu_{2F} > 0$ in this example, we have instability in the factor space.

\newpage
\nocite{*}
\bibliographystyle{plain}
	\bibliography{lwerbnd}
\end{document}